\newcommand{\Z}{\mathbb{Z}}
\newcommand{\N}{\mathbb{N}}
\newcommand{\F}{\mathbb{F}}
\newcommand{\Q}{\mathbb{Q}}
\newcommand{\C}{\mathbb{C}}
\newtheorem{Theo}{Theorem}
\newtheorem{Lem}{Lemma}
\begin{document}
\title{The non-existence of universal Carmichael numbers}
\author{Jan-Christoph Schlage-Puchta}
\begin{abstract}
We show that universal elliptic Carmichael numbers do not exist, answering a question of Silverman. Moreover, we show that the probability that an integer $n$, which is not a prime power, is an elliptic Carmichael number for a random curve $E$ with good reduction modulo $n$, is bounded above by $\mathcal{O}(\log^{-1} n)$. If we choose both $n$ and $E$ at random, the probability that $n$ is $E$-carmichael is bounded above by $\mathcal{O}(n^{-1/8+\epsilon})$.
\end{abstract}
\maketitle
MSC: 11G20, 11N25, 11Y11\\
Keywords: Elliptic curves, pseudoprimes, Carmichael numbers, 
\begin{center}
{\em To the memory of Wolfgang Schwarz}
\end{center}
\section{Introduction and results}
Let $E$ be an elliptic curve defined over $\Q$. Let $L(s, E)=\sum_{n\geq 1}\frac{a_n(E)}{n^s}$ be the $L$-series associated with $E$. If $p$ is prime, then $|E(\Z/p\Z)|=p-a_p(E)+1$. Hence, if $a_n(E)$ is computable, checking whether $|E(\Z/n\Z)|=n-a_n(E)+1$ is a necessary criterion for $n$ to be prime. Unfortunately the order of $E(\Z/n\Z)$ is not directly accessible, so in practice one chooses a point $P$ on the curve, and checks whether $(n-a_n(E)+1)P=0$. If this condition is satisfied for every $P\in E(\Z/n\Z)$, but $n$ is not a prime power, we say that $n$ is a Carmichael number for the curve $E$. 

Silverman\cite{Sil} defined a universal elliptic Carmichael number to be an integer $n$, which has at least two different prime factors, but $n$ is a Carmichael number for every elliptic curve $E$, which has good reduction modulo $n$, and asked whether universal elliptic Carmichael numbers exist. Here we show that such numbers do not exist. In fact, we can explicitly give parameters $a_p(E)$, which imply that $n$ is not an elliptic Carmichael number for the curve $E$. While very explicit, this proof leaves the possibility that an integer $n$ is elliptic Carmichael for most curves. Therefore we are also interested in the probability that an integer $n$ is elliptic Carmichael for a random curve $E$. We prove the following.

\begin{Theo}
\label{thm:main}
Let $n$ be an integer, which is not a prime power.
\begin{enumerate}
\item There exists a curve $E$ with good reduction modulo $n$, such that $n$ is not an elliptic Carmichael number for $E$.
\item As $n$ tends to infinity, the probability that $n$ is elliptic Carmichael for a random curve $E$ is $\mathcal{O}(\log^{-1} n)$.
\item If $n\in[x, 2x]$ is chosen at random, and $E$ is a random curve, then probability that $n$ is $E$-Carmichael is $\mathcal{O}(x^{-1/8+\epsilon})$.
\end{enumerate}
\end{Theo}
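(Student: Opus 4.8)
The plan is to reduce the Carmichael condition to a family of divisibility constraints indexed by the primes dividing $n$, and then to estimate how often these constraints are met over the family of curves. Writing $n=\prod_{p\mid n}p^{v_p}$ and $m=m(E)=n+1-a_n(E)$, the Chinese Remainder Theorem gives $E(\Z/n\Z)=\prod_{p\mid n}E(\Z/p^{v_p}\Z)$, so $n$ is $E$-Carmichael precisely when $\mathrm{lcm}_{p\mid n}e_p$ divides $m$, where $e_p$ denotes the exponent of $E(\Z/p^{v_p}\Z)$. The first structural input I would establish is a lower bound for $e_p$: the kernel of reduction $E(\Z/p^{v_p}\Z)\to E(\F_p)$ contributes a factor $p^{v_p-1}$ to the exponent, while $E(\F_p)\cong\Z/d_1\times\Z/d_2$ with $d_1\mid d_2$ and $d_1\mid p-1$ forces $d_2\gg\sqrt{\#E(\F_p)}\gg\sqrt p$. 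Hence $e_p\gg p^{v_p-1/2}$ unconditionally, so each constraint $e_p\mid m$ demands that a large number divide $m$.

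The key probabilistic estimate is a single-prime bound: for fixed $p\mid n$ and $E$ drawn from the curve family, $\Pr_E[e_p\mid m]\ll n^{\epsilon}\,p^{1/2-v_p}$. To prove it I would condition on the reductions of $E$ at all primes dividing $n$ other than $p$; this fixes $m$ as an affine-multiplicative function of $a_p$, leaving the trace $a_p$ as the only randomness, which equidistributes in the Hasse interval by a vertical Sato--Tate / Birch-type input. Since $e_p\mid m$ forces $e_p$ to be one of the $\ll n^{\epsilon}$ divisors of $m$ exceeding $p^{v_p-1/2}$, and the proportion of curves with a prescribed large $d\mid\#E(\F_{p^{v_p}})$ is $\ll d^{\epsilon-1}$, summing over admissible $d$ yields the bound. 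As the reductions at distinct primes are independent, the conditional estimates may be multiplied, giving $\Pr_E[\text{Carmichael}]\ll n^{\epsilon}\prod_{p\mid n}p^{1/2-v_p}=n^{\epsilon}\sqrt{\mathrm{rad}(n)}/n$, \emph{provided} the equidistribution remains usable simultaneously at every $p\mid n$.

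Part (1) needs no probability: I would prescribe the traces directly. Choosing two primes $p,q\mid n$, the Deuring correspondence lets me pick $a_p$ so that $\#E(\F_p)$ is divisible by a prime $\ell\gg\sqrt p$ with $\ell\mid e_p$, and $a_q$ (hence $a_n\bmod\ell$) so that $m\not\equiv0\pmod\ell$; realizing prescribed traces at finitely many primes at once is possible by a congruence argument on the coefficients of $E$. Then $e_p\nmid m$, so $n$ is not $E$-Carmichael, which is the non-existence of universal Carmichael numbers. Part (2) follows from the single-prime bound applied to the largest prime power $P(n)=p^{v_p}$: since $n$ is not a prime power, $\omega(n)\ge2$ and $P(n)\ge n^{1/\omega(n)}\gg(\log n)^{1-o(1)}$, so the relevant exponent is at least a fixed power of $\log n$, which after a careful divisor count gives $\Pr_E[\text{Carmichael}]\ll\log^{-1}n$.

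Part (3) is the quantitatively delicate statement, and this is where I expect the main obstacle. Averaging over $n\in[x,2x]$ and over $E$, I would split according to the largest prime (power) factor of $n$: if $n$ has a prime factor $p\ge x^{1/4}$ the single-prime bound already gives $\ll x^{-1/8+\epsilon}$, and if $n$ is smooth then it has $\gg\sqrt{\log x/\log\log x}$ prime factors, so the product form of the bound yields a saving of size $x^{-1/2+\epsilon}$ and smooth $n$ are negligible. The difficulty is that the equidistribution of traces holds with a usable error term only for primes below a fixed power of the size of the box from which $E$ is sampled; hence the independence used to multiply the single-prime estimates at all $p\mid n$ cannot be invoked freely once $\mathrm{rad}(n)$ is comparable to $n$, and one must truncate the product and absorb the error terms. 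Optimizing this truncation—balancing the number of primes at which genuine equidistribution is available against the trivial contribution of the remaining factors, with a large-sieve or second-moment input to control the trace distribution uniformly in $n$—is precisely what caps the exponent at $1/8$, and this balance is the heart of the argument.
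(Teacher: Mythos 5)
Your reduction of the Carmichael condition to the divisibility constraints $e_p\mid m$ together with the lower bound $e_p\gg p^{v_p-1/2}$ is exactly the paper's starting point (the lemmas on $\exp(E(\Z/p^k\Z))$, on two-generated abelian groups, and Silverman's Korselt-type criterion), and your part (1) is essentially workable, though the paper's choice is cleaner: for $q^2\mid n$ it takes $a_q=0$ to force $q\mid a_n$, and in the squarefree case it takes $a_p=1$, $a_q=0$, so that $\exp(E(\Z/p\Z))=p$ must divide $n-a_n+1=n+1$ while $p\mid n$. If you keep your version you still need to handle the degenerate case $\ell\mid a_{n/q}$, where varying $a_q$ cannot move $a_n$ off $0\bmod\ell$.

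The genuine gap is in parts (2) and (3). Your single-prime bound $\Pr_E[e_p\mid m]\ll n^\epsilon p^{1/2-v_p}$ fails as stated: after conditioning on the reductions at the other primes, $m=n+1-a_{n/p^{v_p}}a_{p^{v_p}}$, and if $a_{n/p^{v_p}}=0$ then $m=n+1$ is constant and the conditional probability of $e_p\mid m$ can be large or even $1$; the paper needs a separate lemma for the case $a_n(E)=0$, resting on a count of integers in $[p-2\sqrt p,\,p+2\sqrt p]$ all of whose prime factors divide $n+1$. More fatally for part (2), even where your bound holds it carries a divisor-function loss $n^\epsilon$ (really $\exp(c\log n/\log\log n)$), which completely swamps the saving $p^{-v_p/2}$ once the largest prime-power factor of $n$ is only of size $(\log n)^{1+o(1)}$ --- and that is precisely the extremal case, since $n^{1/\omega(n)}$ can be that small. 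So applying the bound at the largest prime power plus ``a careful divisor count'' cannot produce $O(\log^{-1}n)$; the paper instead runs a long bootstrap (bounding $n/\gamma(n)$, eliminating large and then medium prime factors regime by regime, a Chernoff-type argument producing many primes $p_i\mid n$ with $p_i=q_i-a_{q_i}+1$ so that $a_n\equiv1\pmod{\prod p_i}$, and $\tau_3$ estimates to finish). Likewise your product $\prod_{p\mid n}p^{1/2-v_p}$ is not justified: although the reductions $E\bmod p^{v_p}$ are independent, the events $e_p\mid m$ all constrain the same integer $m$ and are not independent, and the paper multiplies probabilities only for conditions that genuinely decouple (each depending on a single prime). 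Your part (3) outline --- split on $P^+(n)$, discard smooth $n$ by counting --- does match the paper's strategy, but the exponent $1/8$ arises there from the term $x^\epsilon/(d\sqrt H)$ with $H>x^{1/4}$ in the medium-divisor lemma, not from a truncation of the range of trace equidistribution.
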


The implied constants are absolute and not too large, and come mostly from replacing terms of the form $\log\log n$ by $\log^\epsilon n$. 

Note that Luca and Shparlinski\cite{LS} considered the dual question to part 2 of Theorem~\ref{thm:main}, i.e. if $E$ is a fixed elliptic curve and $n$ is chosen at random, then the number of $E$-Carmichael integers in $[x,2x]$ is $\mathcal{O}(\frac{x}{\log\log x})$. The argument involved is quite different from our arguments, in particular, we do not have to consider to twists $L(s, E, \chi)$ of the $L$-series associated to $E$. 

Our notation follows the standard for analytic number theory. We denote by $\log_k$ the $k$-fold iterated logarithm, e.g. $\log_2 n=\log\log n$, by $\omega(n)$ the number of prime factors of $n$, $P^+(n)$ the largest prime factor of $n$. For an integer $n=\prod p_i^{e_i}$ we call $\gamma(n)=\prod p_i$ the squarefree kernel of $n$. For an integer $n$ and a prime number $p$, denote by $\nu_p(n)$ the exponent of $p$ in the prime decomposition of $n$, that is, the largest $k$ such that $p^k|n$. For a group $G$ we put $\exp(G)$, the exponent of $G$ to be the least integer $n>0$, such that $g^n=1$ for all $g\in G$. We use $\ll$ as a synonym of $=\mathcal{O}(\dots)$, an index at a Landau symbol indicates that the implied constants depend on the index.

\section{Preliminaries}
In this section we collect results on the $L$-series of an elliptic curve and on multiplicative number theory.

Our first two statements are well known, and can e.g. be found in \cite{Silbook}.

\begin{Lem}
\label{Lem:L coeffs}
The function $n\mapsto a_n$ is multiplicative, and satisfies $|a_n|\leq 2^{\omega(n)} \sqrt{n}$. For each $p$ there exists a comples number $\alpha_p$ of modulus $\sqrt{p}$, such that $a_{p^k}=\alpha^k+\overline{\alpha}^k$.
\end{Lem}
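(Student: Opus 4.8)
The plan is to deduce everything from the Euler product of $L(s,E)$. Writing $L(s,E)=\prod_p L_p(s)^{-1}$ with $L_p(s)=1-a_pp^{-s}+p^{1-2s}$ at primes $p$ of good reduction (and the degenerate factor $1-a_pp^{-s}$, $a_p\in\{-1,0,1\}$, at the finitely many bad primes), multiplicativity of $n\mapsto a_n$ is immediate: a Dirichlet series that factors as a product of local factors indexed by the primes has multiplicative coefficients, so $a_{mn}=a_ma_n$ whenever $\gcd(m,n)=1$. This reduces both the formula and the bound to the case of a single prime power $p^k$.

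For the local analysis I would invoke the Hasse bound $|a_p|\le 2\sqrt p$ (part of the standard input from \cite{Silbook}). This guarantees that the local polynomial $1-a_pX+pX^2$ has discriminant $a_p^2-4p\le 0$, hence factors as $(1-\alpha_pX)(1-\overline{\alpha_p}X)$ with $\alpha_p+\overline{\alpha_p}=a_p$ and $\alpha_p\overline{\alpha_p}=p$; the two roots are genuine complex conjugates of common modulus $|\alpha_p|=\sqrt p$. For a bad prime one simply takes $\alpha_p=a_p$, which already satisfies $|a_{p^k}|=|a_p|^k\le 1$.

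To reach the stated formula $a_{p^k}=\alpha_p^k+\overline{\alpha_p}^k$ I would identify $a_{p^k}$ with the trace of the $k$-th power of Frobenius: since Frobenius has eigenvalues $\alpha_p,\overline{\alpha_p}$, its $k$-th power has eigenvalues $\alpha_p^k,\overline{\alpha_p}^k$, and taking the trace yields the power sum. The bound is then routine: $|a_{p^k}|\le|\alpha_p|^k+|\overline{\alpha_p}|^k=2p^{k/2}$, and multiplying over the prime powers exactly dividing $n$ gives $|a_n|=\prod_{p^k\| n}|a_{p^k}|\le\prod_{p^k\| n}2p^{k/2}=2^{\omega(n)}\sqrt n$. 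The one point demanding care — and the main obstacle — is keeping the two candidate meanings of $a_{p^k}$ apart. Naively expanding $L_p(s)^{-1}$ as a power series in $p^{-s}$ produces the \emph{symmetric} coefficient $\sum_{i+j=k}\alpha_p^i\overline{\alpha_p}^j$, whose modulus is only bounded by $(k+1)p^{k/2}$ and would degrade the final estimate to the divisor bound $d(n)\sqrt n$. The power sum $\alpha_p^k+\overline{\alpha_p}^k$ of the statement is instead the trace of Frobenius powers, equivalently the point-count defect $p^k+1-|E(\F_{p^k})|$, and it is this quantity that produces the clean factor $2^{\omega(n)}$. I would therefore fix this convention explicitly at the outset and confirm that the multiplicative extension used here is precisely the one fed into the Carmichael criterion in the later sections.
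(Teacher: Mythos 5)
Your argument is correct, but note that the paper does not actually prove this lemma at all: it is quoted as ``well known'' with a pointer to \cite{Silbook}, so there is no internal proof to compare against, and what you have written is a genuine self-contained argument where the paper has only a citation. The one substantive point you raise --- that the power sum $\alpha_p^k+\overline{\alpha_p}^k$ is \emph{not} the coefficient of $p^{-ks}$ in the expansion of $(1-a_pp^{-s}+p^{1-2s})^{-1}$, which is instead $(\alpha_p^{k+1}-\overline{\alpha_p}^{k+1})/(\alpha_p-\overline{\alpha_p})$ and only satisfies $|a_{p^k}|\le(k+1)p^{k/2}$ --- is exactly right, and it exposes a real inconsistency in the paper: the introduction defines $a_n$ as the literal Dirichlet coefficient of $L(s,E)$, under which convention the lemma as stated is false and the bound would degrade to $d(n)\sqrt n$. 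Your proposed resolution (take $a_{p^k}=p^k+1-|E(\F_{p^k})|$, the trace of the $k$-th Frobenius power, and extend multiplicatively) is the convention the paper actually uses everywhere downstream: Lemma~\ref{Lem:Baker} analyses $a_{p^k}=1$ via $\alpha^k=1-\overline{\alpha}^k$, and the squarefreeness lemma computes $a_{q^{\nu_q(n)}}=2\Re\,\alpha^{\nu_q(n)}$, both of which presuppose the power-sum formula. Two minor remarks: for a prime $p$ of good reduction the Hasse bound for integral $a_p$ forces $a_p^2-4p<0$ strictly, so the roots really are non-real conjugates of modulus $\sqrt p$ as you say; and at bad primes the literal statement ``$|\alpha_p|=\sqrt p$'' fails (as you implicitly acknowledge by treating them separately), but this is harmless since the paper only ever applies the lemma to primes of good reduction, and your observation that the bound $|a_{p^k}|\le 2p^{k/2}$ persists there keeps the global estimate $|a_n|\le 2^{\omega(n)}\sqrt n$ intact.
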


\begin{Lem}
\label{Lem:Exponent}
The group $E(\Z/p\Z)$ is a two-generated abelian group, and we have $\exp(E(\Z/p^k\Z)) = p^{k-1} \exp(E(\Z/p\Z))$.
\end{Lem}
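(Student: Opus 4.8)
The plan is to treat the two assertions in turn, working through the reduction map and the formal group. For the first claim I would fix a prime $\ell\neq p$ and recall that for a curve with good reduction $E[\ell]\cong(\Z/\ell\Z)^2$; hence the $\ell$-torsion of $E(\Z/p\Z)$ embeds into $(\Z/\ell\Z)^2$ and is two-generated, while the $p$-power torsion of $E(\Z/p\Z)$ injects into $E(\F_p)[p^\infty]$, which is $0$ or $\Z/p\Z$ according as $E$ is supersingular or ordinary, hence cyclic. A finite abelian group all of whose $m$-torsion subgroups need at most two generators is itself two-generated, giving $E(\Z/p\Z)\cong\Z/d_1\Z\times\Z/d_2\Z$ with $d_1\mid d_2$.

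For the exponent formula I would use the reduction maps to build the short exact sequence
\[
0\to E_1\to E(\Z/p^k\Z)\xrightarrow{\pi} E(\Z/p\Z)\to 0,
\]
where $E_1$ is the kernel of reduction. Good reduction makes $E$ smooth over $\Z/p^k\Z$, so $\pi$ is surjective by Hensel lifting along the fibres, and counting lifts fibrewise gives $|E_1|=p^{k-1}$. The subgroup $E_1$ is the formal group $\hat E$ evaluated on $p\Z/p^k\Z$, filtered by $E^{(i)}=\hat E(p^i\Z/p^k\Z)$ with $E^{(i)}/E^{(i+1)}\cong\F_p$. The structural input I need is that multiplication by $p$ carries $E^{(i)}$ isomorphically onto $E^{(i+1)}$ for $i\geq 1$: on the formal parameter $[p](T)=pT+\cdots$, and for $\nu_p(T)\geq 1$ the leading term $pT$ has strictly smaller valuation than every higher term, so $[p]$ raises the filtration level by exactly one. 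Consequently any $T$ with $\nu_p(T)=1$ has order $p^{k-1}$, and $E_1\cong\Z/p^{k-1}\Z$ is cyclic; for $p\in\{2,3\}$ I would verify the two lowest filtration steps directly, and invoke the formal logarithm for $p\geq 5$.

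It remains to compute $\exp$. Since $E_1$ is a $p$-group, $\pi$ restricts to an isomorphism on the prime-to-$p$ parts, so it suffices to treat the $p$-Sylow. Writing $\exp(E(\Z/p\Z))=p^am$ with $\gcd(m,p)=1$, the upper bound is immediate: for $Q$ in the $p$-Sylow one has $\exp(E(\Z/p\Z))\cdot Q\in E_1$, so $p^{k-1}p^aQ=0$ and $\exp(E(\Z/p^k\Z))\mid p^{k-1}\exp(E(\Z/p\Z))$. For the lower bound I would pick $\overline Q\in E(\Z/p\Z)$ of exact order $p^a$, lift it to $Q$ in the $p$-Sylow, and aim to show that $p^aQ$ generates $E_1$; writing $p^aQ=[p]\bigl([p^{a-1}]Q\bigr)$ and noting that $[p^{a-1}]Q$ reduces to $p^{a-1}\overline Q\neq O$ and so lies outside $E_1$, the desired conclusion is $p^aQ\in E^{(1)}\setminus E^{(2)}$, which would force $\mathrm{ord}(Q)=p^{a+k-1}$.

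The main obstacle is precisely this last step: the filtration-shifting property of $[p]$ was established only for points already at reduction level $\geq 1$, whereas $[p^{a-1}]Q$ sits at level $0$, so the behaviour of $[p]$ on it is not controlled by the formal group alone. In fact whether $p^aQ$ generates $E_1$ amounts to the non-splitting of the extension above in the $p$-direction, and I expect to control it by passing to the $\Z_p$-points: the copy $\hat E(p\Z_p)\cong\Z_p$ sits in $E(\Z_p)$ with finite index $|E(\Z/p\Z)|$, multiplication by $p^{k-1}$ on it cuts out exactly the kernel of reduction mod $p^k$, and tracking a maximal-order generator through $E(\Z_p)$ should produce the element of order $p^{a+k-1}$. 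This is the delicate point; the two-generation of $E(\Z/p\Z)$ and the cyclicity of $E_1$ are by comparison routine.
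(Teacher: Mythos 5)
The paper offers no proof of this lemma at all --- it is covered by the blanket remark that the first two statements ``can e.g.\ be found in \cite{Silbook}'' --- so there is nothing to compare your argument against line by line. The substantive point is that the step you yourself single out as the delicate one is not merely delicate: it fails, and with it the stated equality. Your upper bound $\exp(E(\Z/p^k\Z))\mid p^{k-1}\exp(E(\Z/p\Z))$ is correct (given that the kernel of reduction $E_1$ is cyclic of order $p^{k-1}$), but the lower bound requires a lift $Q$ of a point of maximal $p$-power order in $E(\Z/p\Z)$ with $p^aQ$ generating $E_1$, i.e.\ the extension must be maximally non-split in the $p$-direction, and nothing guarantees this. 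A concrete counterexample: let $E$ be the curve $y^2+y=x^3-x^2-10x-20$ of conductor $11$, and take $p=5$, $k=2$. Then $E$ has good reduction at $5$ and $a_5=1$, so $|E(\F_5)|=5$ and $\exp(E(\Z/5\Z))=5$. The rational point $P=(5,5)$ has exact order $5$ and reduces to a generator of $E(\F_5)$; modulo $25$ the group has order $25$ and contains the two trivially intersecting subgroups $\langle P\rangle\cong\Z/5\Z$ and $E_1\cong\Z/5\Z$, so $E(\Z/25\Z)\cong\Z/5\Z\oplus\Z/5\Z$ and $\exp(E(\Z/25\Z))=5$, not $25$. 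Your fallback of tracking a maximal-order generator through $E(\Z_5)$ cannot rescue this: here $E(\Z_5)\cong\Z_5\oplus\Z/5\Z$ with the kernel of reduction contained in the free summand, and the $5$-part of $E(\F_5)$ is accounted for entirely by the torsion summand, which gains no extra depth upon passing to $\Z/5^k\Z$.

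What your argument does establish is correct and is all the paper ever uses: $E(\Z/p\Z)$ is two-generated; $E_1\cong\Z/p^{k-1}\Z$ (cleanly for $p\geq 3$ via the formal logarithm, while $p=2$ needs the extra care you indicate); hence $p^{k-1}\mid\exp(E(\Z/p^k\Z))$ and $\exp(E(\Z/p^k\Z))\mid p^{k-1}\exp(E(\Z/p\Z))$. Every later invocation of Lemma~\ref{Lem:Exponent} --- in the squarefreeness lemma, in Lemma~\ref{Lem:gamma1}, and in Lemma~\ref{Lem:gammabound2} --- only needs the divisibility $p^{\nu_p(n)-1}\mid\exp(E(\Z/p^{\nu_p(n)}\Z))$, so the right move is to prove and record the two divisibilities rather than the equality, which is false as stated.
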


Define the function $e:\N\rightarrow\N$ to be the multiplicative function satisfying $e(p^k)=p^{\lceil k/2\rceil}$.

\begin{Lem}
\label{Lem:exponent 2generated}
Let $G$ be a two generated abelian group. Then $e(|G|)|$ divides $\exp(G)$.
\end{Lem}
\begin{proof}
It follows from the classification of finitely generated abelian groups, that $G$ can be written as $G\cong\Z/n_1\Z\oplus\Z/n_2\Z$ with $n_1|n_2$. Clearly $\exp(G)=n_2$, and we have that if $p^k|n_1n_2$, then $p^{\lceil k/2\rceil}|n_2$. Hence our claim follows.
\end{proof}

We will repeatedly use the following alternative definition of an elliptic Carmichael number, which is \cite[Proposition~12]{Sil}.

\begin{Lem}
\label{Lem:Carmichael crit}
An integer $n$ is elliptic Carmichael for the curve $E$ if and only if $E$ has good reduction modulo $n$, and for each prime divisor $p$ of $n$ we have that $\exp(E(\Z/p^{\nu_p(n)}\Z))$ divides $a-a_n+1$.
\end{Lem}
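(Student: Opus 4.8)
The plan is to reduce the global divisibility condition defining an elliptic Carmichael number to a condition checked separately at each prime power dividing $n$, using the Chinese Remainder Theorem together with the behaviour of the group exponent under direct sums. By definition, $n$ is elliptic Carmichael for $E$ precisely when $E$ has good reduction modulo $n$ (and $n$ is not a prime power) and $(n-a_n+1)P=0$ holds for every $P\in E(\Z/n\Z)$. My first observation is that this last condition is exactly the assertion that $\exp(E(\Z/n\Z))$ divides $n-a_n+1$: a single integer $m$ annihilates every element of a finite abelian group $G$ if and only if $\exp(G)\mid m$, which is the very definition of the exponent. The not-a-prime-power clause is a property of $n$ alone and is shared by both sides, so the content of the lemma is the translation of this annihilation statement into a prime-by-prime divisibility statement.

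Next I would decompose $E(\Z/n\Z)$ according to the factorization $n=\prod_{p\mid n}p^{\nu_p(n)}$. Since $E$ has good reduction modulo $n$, its discriminant is a unit in $\Z/n\Z$, so $E$ extends to a smooth group scheme over $\Z/n\Z$ and the formation of its group of points commutes with products of the base ring. The ring isomorphism $\Z/n\Z\cong\prod_{p\mid n}\Z/p^{\nu_p(n)}\Z$ supplied by the Chinese Remainder Theorem therefore induces an isomorphism of abelian groups
\[
E(\Z/n\Z)\;\cong\;\bigoplus_{p\mid n}E(\Z/p^{\nu_p(n)}\Z).
\]
The step requiring the most care is justifying this decomposition: one must verify that good reduction modulo $n$ is precisely what guarantees that each factor $E(\Z/p^{\nu_p(n)}\Z)$ is the group of points of a genuine (smooth) reduction of $E$ and that the group law respects the CRT splitting. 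This is standard once $E$ is viewed as a group scheme over $\Z/n\Z$, since a morphism out of $\operatorname{Spec}(R_1\times R_2)=\operatorname{Spec}R_1\sqcup\operatorname{Spec}R_2$ is a pair of morphisms, but it is the only point where the good-reduction hypothesis is genuinely used, and I would single it out as the crux.

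Finally, the exponent of a finite direct sum of abelian groups is the least common multiple of the exponents of the summands, so
\[
\exp(E(\Z/n\Z))\;=\;\operatorname{lcm}_{p\mid n}\exp\bigl(E(\Z/p^{\nu_p(n)}\Z)\bigr).
\]
Since a least common multiple divides an integer $m$ if and only if each of the individual terms divides $m$, combining the three steps shows that $\exp(E(\Z/n\Z))\mid n-a_n+1$ if and only if $\exp(E(\Z/p^{\nu_p(n)}\Z))\mid n-a_n+1$ for every prime $p\mid n$. Together with the good-reduction hypothesis, which appears unchanged on both sides, this is exactly the stated criterion, completing the equivalence.
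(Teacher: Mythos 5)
Your argument is correct: the reduction of the annihilation condition $(n-a_n+1)P=0$ to $\exp(E(\Z/n\Z))\mid n-a_n+1$, the CRT splitting $E(\Z/n\Z)\cong\bigoplus_{p\mid n}E(\Z/p^{\nu_p(n)}\Z)$ for a curve with good reduction, and the lcm property of exponents together give exactly the stated criterion (the $a$ in the lemma is a typo for $n$, which you have silently corrected). Note, however, that the paper does not prove this lemma at all --- it quotes it as Proposition~12 of Silverman's paper --- so there is no internal proof to compare against; your write-up is the standard argument one would expect behind that citation, and you correctly identify the CRT/group-scheme step as the only place the good-reduction hypothesis does real work.
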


Deuring\cite{Deuring} determined the number of curves modulo $p$ having prescribed order.

\begin{Lem}
\label{Lem:Deuring}
Let $p$ be a prime. Then the number of curves $E$ modulo $p$ with $|E(\Z/p\Z)|=p-a_p+1$ equals $H(4p-a_p^2)$, where $H(n)$ is the Kronecker class number, which can be computed as follows. Write $n=n'f^2$, where $n'$ is squarefree and $(n', f)=1$. Then
\[
H(n) = \frac{\sqrt{n}}{2\pi}L\left(1, \left(\frac{\cdot}{n'}\right)\right) \psi(f),
\]
where $\psi$ is the multiplicative function defined by
\[
\psi(p^k) = \begin{cases} \frac{p-p^{-k}}{p-1}, & \mbox{ if } \left(\frac{p}{n'}\right)=0,\\
1, & \mbox{ if } \left(\frac{p}{n'}\right)=1,\\
\frac{p+1-2p^{-k}}{p-1}, & \mbox{ if } \left(\frac{p}{n'}\right)=-1.
\end{cases}
\]
The function $\psi$ satisfies $1\leq \psi(f)\ll \log_2^2 f$.
\end{Lem}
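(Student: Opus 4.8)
The plan is to treat the three assertions of the lemma separately. That the number of curves modulo $p$ with $|E(\Z/p\Z)|=p-a_p+1$ equals the Kronecker class number $H(4p-a_p^2)$ is Deuring's classical theorem, and the explicit evaluation of $H(n)$ in terms of $L\!\left(1,\left(\frac{\cdot}{n'}\right)\right)$ and the factor $\psi(f)$ is the classical Kronecker--Hurwitz class number formula; both of these I would simply quote from the literature. The only assertion genuinely requiring an argument is the final inequality $1\leq\psi(f)\ll\log_2^2 f$, and this is where I would concentrate the work. Since $\psi$ is multiplicative it suffices to understand its local factors and then to estimate a product over the primes dividing $f$.

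First I would settle the lower bound by checking, in each of the three cases of the definition, that the local factor is at least $1$. A short computation gives $\frac{p-p^{-k}}{p-1}\geq\frac{p+1}{p}>1$ in the ramified case and $\frac{p+1-2p^{-k}}{p-1}\geq\frac{p+2}{p}>1$ in the inert case (both expressions being increasing in $k$, so it suffices to check $k=1$), while the split case contributes exactly $1$. Multiplying over the prime powers dividing $f$ yields $\psi(f)\geq 1$. For the upper bound I would extract from the same three formulas the single uniform estimate $\psi(p^k)\leq 1+\frac{2}{p-1}$, valid in every case, so that
\[
\psi(f)\leq\prod_{p\mid f}\left(1+\frac{2}{p-1}\right).
\]

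It remains to bound this product. The factors $1+\frac{2}{p-1}$ are decreasing in $p$, whereas the cost $\log p$ of admitting a prime into $f$ increases with $p$, so for a given size of $f$ the product is largest when the prime divisors are as small as possible. Concretely, if $p_1<\dots<p_r$ are the distinct prime factors of $f$, then $p_j$ is at least the $j$-th prime $q_j$, whence the product over the $p_j$ is at most the product over $q_1,\dots,q_r$; simultaneously $f\geq\prod_j q_j=e^{\theta(q_r)}$ forces $q_r\ll\log f$ by Chebyshev's lower bound for $\theta$. Taking logarithms, using $\log(1+x)\leq x$, and invoking Mertens' theorem then gives
\[
\log\psi(f)\leq\sum_{p\leq q_r}\frac{2}{p-1}=2\log\log q_r+O(1)\leq 2\log_3 f+O(1),
\]
and exponentiating yields $\psi(f)\ll(\log_2 f)^2$, as claimed.

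I expect the main obstacle to be bookkeeping rather than conceptual: one must verify the uniform local bound $\psi(p^k)\leq 1+\frac{2}{p-1}$ cleanly across all three cases, including small $p$ and small $k$, and then justify the extremal step that replacing the prime divisors of $f$ by the smallest primes only increases the product. Both are routine once set up, and it is worth noting that the exponent $2$ in the final bound $\log_2^2 f$ is precisely the numerator $2$ in $1+\frac{2}{p-1}$ surviving through Mertens' estimate $\sum_{p\leq y}\frac{1}{p}=\log\log y+O(1)$.
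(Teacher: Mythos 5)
Your proposal is correct. The paper itself gives no proof of this lemma: it is stated as a quotation of Deuring's theorem together with the classical class number formula, and even the final bound $1\leq\psi(f)\ll\log_2^2 f$ is asserted without argument. So your deferral of the first two assertions to the literature matches the paper exactly, and your verification of the $\psi$-bound supplies a detail the paper omits. That verification checks out: the local factors are $\frac{p+1}{p}$, $1$, or $\frac{p+2}{p}$ at $k=1$ and increase in $k$, giving the lower bound; the uniform estimate $\psi(p^k)\leq\frac{p+1}{p-1}=1+\frac{2}{p-1}$ holds in all three cases including $p=2$; and the extremal step (replace the prime divisors of $f$ by the first $r$ primes, note $\log f\geq\theta(q_r)\gg q_r$ by Chebyshev, then apply Mertens) correctly yields $\log\psi(f)\leq 2\log_3 f+O(1)$, i.e.\ $\psi(f)\ll\log_2^2 f$. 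This is the standard argument one would expect behind the stated bound, and the exponent $2$ indeed traces back to the numerator in $1+\frac{2}{p-1}$ as you observe.
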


Using this result Lenstra\cite{Lenstra} proved the following.

\begin{Lem}
\label{Lem:Lenstra}
Let $p$ be a prime, $S\subseteq[p-2\sqrt{p}, p+2\sqrt{p}]$ a set of integers. Then the probability that an elliptic curve $E$ chosen at random satisfies $|E(\Z/p\Z)|\in S$ is bounded above by $\mathcal{O}\left(\frac{|S|}{\sqrt{p}}\log p\log_2^2 p\right)$.
\end{Lem}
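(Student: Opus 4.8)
The plan is to reduce the claim to Deuring's exact count in Lemma~\ref{Lem:Deuring} together with the classical upper bound for the class number supplied by the class number formula appearing there. I would model a random curve as a uniformly random pair $(a,b)\in(\Z/p\Z)^2$ with $4a^3+27b^2\neq 0$, so that the probability in question is the ratio
\[
\frac{\#\{(a,b):|E(\Z/p\Z)|\in S\}}{\#\{(a,b):4a^3+27b^2\neq 0\}},
\]
where $E$ denotes $y^2=x^3+ax+b$. The denominator equals $p^2+\mathcal{O}(p)$, since the singular pairs lie on a curve and number $\mathcal{O}(p)$. Each isomorphism class of curves over $\Z/p\Z$ is represented by at most $(p-1)/2$ such pairs (with equality when $j\neq 0,1728$), and by Lemma~\ref{Lem:Deuring} the number of classes with $|E(\Z/p\Z)|=m$ equals $H(4p-a_p^2)$, where $a_p=p+1-m$. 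Hence the numerator is at most $\frac{p-1}{2}\sum_{m\in S}H(4p-a_p^2)$, and it suffices to bound this sum.

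Next I would bound a single term. Writing $4p-a_p^2=n'f^2$ as in Lemma~\ref{Lem:Deuring}, the formula there gives $H(4p-a_p^2)=\frac{\sqrt{4p-a_p^2}}{2\pi}L\!\left(1,\left(\frac{\cdot}{n'}\right)\right)\psi(f)$, and I would estimate the three factors separately. On the Hasse interval $a_p^2\leq 4p$, so $\sqrt{4p-a_p^2}\leq 2\sqrt p$; the standard estimate $L(1,\chi)\ll\log q$ for a real character of conductor $q\leq 4p$ gives $L(1,(\frac{\cdot}{n'}))\ll\log p$; and the bound $\psi(f)\ll\log_2^2 f\leq\log_2^2 p$ is already recorded in Lemma~\ref{Lem:Deuring}. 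Multiplying, $H(4p-a_p^2)\ll\sqrt p\,\log p\,\log_2^2 p$, uniformly in the admissible trace $a_p$.

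Summing over the at most $|S|$ values of $a_p$ arising from $m\in S$ then gives $\sum_{m\in S}H(4p-a_p^2)\ll|S|\sqrt p\,\log p\,\log_2^2 p$, so the numerator is $\ll|S|\,p^{3/2}\log p\,\log_2^2 p$; dividing by $p^2+\mathcal{O}(p)$ produces the asserted bound $\mathcal{O}\!\left(\frac{|S|}{\sqrt p}\log p\,\log_2^2 p\right)$. The factor-by-factor estimation is routine, the genuinely nontrivial input being $L(1,\chi)\ll\log p$, which is the source of the $\log p$ in the conclusion. I expect the only delicate point to be the bookkeeping in the first step: one must check that passing from Weierstrass models to isomorphism classes, and the exceptional contributions from $j=0$, $j=1728$, the supersingular trace $a_p=0$, and the singular pairs, are all subsumed in the error term and never worsen the upper bound.
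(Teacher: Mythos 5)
Your proposal is correct, and it follows exactly the route the paper indicates: the paper gives no proof of Lemma~\ref{Lem:Lenstra}, citing Lenstra and noting that it is deduced from Deuring's formula (Lemma~\ref{Lem:Deuring}), which is precisely your argument of summing $H(4p-a_p^2)\ll\sqrt{p}\,\log p\,\log_2^2 p$ over the traces corresponding to $S$ and normalizing by the $\asymp p^2$ Weierstrass models. The bookkeeping you flag (isomorphism classes versus models, and the exceptional $j$-invariants) only shrinks orbit sizes below $(p-1)/2$ and so cannot hurt the upper bound.
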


If $|S|$ is not too small, we can do better.

\begin{Lem}
\label{Lem:Lenstra large S}
For every fixed $c$ we have that if $p$ is a prime, and $S\subseteq[p-2\sqrt{p}, p+2\sqrt{p}]$ a set of integers satisfying $|S|>p^c$, then the probability $P$ that an elliptic curve $E$ chosen at random satisfies $|E(\Z/p\Z)|\in S$ satisfies
\[
P \ll_c \frac{|S|}{\sqrt{p}}\log_2^3 p.
\]
If $S\subseteq[p-(2-c)\sqrt{p}, p+(2-c)\sqrt{p}]$, we also have
\[
P\gg_c \frac{|S|}{\sqrt{p}\log_2^2 p}
\]
\end{Lem}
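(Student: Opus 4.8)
The plan is to use Deuring's formula (Lemma~\ref{Lem:Deuring}) to turn the probability into a weighted count of class numbers, and then to control, on average over $a$, the analytic factor $L\!\left(1,\left(\frac{\cdot}{n'}\right)\right)$. Writing $a=p+1-|E(\Z/p\Z)|$, the condition $|E(\Z/p\Z)|\in S$ becomes $a\in S'$ for a set $S'$ of the same cardinality contained in the trace interval, and the number of curves with trace $a$ is $H(4p-a^2)$. Since the total number of curves is $T\asymp p$ (the class-number mass over the whole interval $|a|\le 2\sqrt p$ is $\asymp p$), I have $P\asymp \frac1p\sum_{a\in S'}H(4p-a^2)$. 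Unfolding the class-number formula gives $H(4p-a^2)=\frac{\sqrt{4p-a^2}}{2\pi}L\!\left(1,\left(\frac{\cdot}{n'(a)}\right)\right)\psi(f(a))$, where $4p-a^2=n'(a)f(a)^2$. Everything now reduces to estimating first moments of the $L$-factor over $a\in S'$.

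For the upper bound I would bound the elementary factors trivially, $\sqrt{4p-a^2}\le 2\sqrt p$ and $\psi(f(a))\ll\log_2^2 p$ by Lemma~\ref{Lem:Deuring}, so that it suffices to prove $\sum_{a\in S'}L\!\left(1,\left(\frac{\cdot}{n'(a)}\right)\right)\ll |S|\log_2 p$; dividing the resulting bound $\ll |S|\sqrt p\,\log_2^3 p$ by $T\asymp p$ yields the claimed $P\ll \frac{|S|}{\sqrt p}\log_2^3 p$. To bound the first moment I would expand the $L$-value as a Dirichlet polynomial $\sum_{m\le z}\frac{1}{m}\left(\frac{a^2-4p}{m}\right)$ up to a controlled tail (Pólya–Vinogradov giving character sums $\ll\sqrt p\log p$), interchange the order of summation, and for each fixed $m$ estimate the character sum $\sum_{a\in S'}\left(\frac{a^2-4p}{m}\right)$ in the variable $a$. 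For $m$ not a perfect square this is a nontrivial character sum, small by Weil's bound together with Pólya–Vinogradov over the interval, while the diagonal contribution of square moduli is $\sum_{k}k^{-2}\ll 1$ times $|S|$. This is exactly the point where the hypothesis $|S|>p^c$ enters: it guarantees that the accumulated error terms, of size a fixed power of $p$ (depending on the truncation length $z=z(c)$), are dominated by $|S|$, which is why the implied constant depends on $c$.

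For the lower bound I would replace pointwise control of $L$ by a Cauchy–Schwarz argument, which avoids any (unavailable) individual lower bound for $L(1,\chi)$. Writing $|S|=\sum_{a\in S'}1\le\big(\sum_{a\in S'}H(4p-a^2)\big)^{1/2}\big(\sum_{a\in S'}H(4p-a^2)^{-1}\big)^{1/2}$ gives $\sum_{a\in S'}H(4p-a^2)\ge |S|^2/\sum_{a\in S'}H(4p-a^2)^{-1}$, so it suffices to prove the reciprocal first moment $\sum_{a\in S'}H(4p-a^2)^{-1}\ll_c\frac{|S|}{\sqrt p}\log_2^2 p$. Here the restriction $S\subseteq[p-(2-c)\sqrt p,p+(2-c)\sqrt p]$ is used to guarantee $\sqrt{4p-a^2}\gg_c\sqrt p$, and $\psi(f)\ge 1$, so the reciprocal moment reduces to $\sum_{a\in S'}L\!\left(1,\left(\frac{\cdot}{n'(a)}\right)\right)^{-1}\ll|S|\log_2^2 p$; feeding this back through Cauchy–Schwarz produces $\sum_{a\in S'}H\gg_c\frac{|S|\sqrt p}{\log_2^2 p}$ and hence $P\gg_c\frac{|S|}{\sqrt p\log_2^2 p}$ after dividing by $T$.

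I expect the reciprocal moment to be the main obstacle. Unlike $L(1,\chi)$, the quantity $L(1,\chi)^{-1}$ can be very large when $\chi$ is close to having a Siegel zero, so no pointwise bound is available and one must genuinely control the average. The tool I would invoke is the classical fact that $\sum_{|d|\le x}L\!\left(1,\left(\frac{\cdot}{d}\right)\right)^{-1}\ll x$ over fundamental discriminants, reflecting that $1/L$ has finite mean; the rare discriminants with a near–Siegel zero contribute a bounded amount and are absorbed once $|S|>p^c$. The remaining technical work, and the second source of difficulty, is the transfer of this average from all fundamental discriminants to the thin family $\{n'(a):a\in S'\}$ of squarefree parts of $4p-a^2$: one must count how often each discriminant is hit, which amounts to an equidistribution statement for $a^2-4p$ in residue classes and among squarefree kernels, again powered by $|S|>p^c$. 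Handling the non-fundamental part, i.e. the interplay of $n'(a)$ and $f(a)$ in the two moments, is the routine but delicate book-keeping that accounts for the extra factors of $\log_2 p$.
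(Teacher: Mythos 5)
Your reduction via Lemma~\ref{Lem:Deuring} to first moments of $L\left(1,\left(\frac{\cdot}{n'(a)}\right)\right)$ over $a\in S'$ is where the argument breaks down, and the reason is that $S$ is an \emph{arbitrary} subset of the trace interval, possibly of size only $p^{c}$ with $c$ very small. For the upper bound, after interchanging summation you must estimate $\sum_{a\in S'}\left(\frac{a^2-4p}{m}\right)$; Weil and P\'olya--Vinogradov give cancellation for sums over intervals or complete residue systems, not over an arbitrary subset, so the only available bound is the trivial $|S|$, and summing $\frac{1}{m}$ up to the truncation length $z\approx\sqrt{p}\log p$ forced by the tail estimate yields $|S|\log p$ rather than the required $|S|\log_2 p$. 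For the lower bound the situation is worse: the mean-value bound $\sum_{|d|\leq x}L(1,\chi_d)^{-1}\ll x$ controls, via Markov's inequality, only that the number of discriminants with $L(1,\chi_d)^{-1}>\log_2^2 p$ is $\ll\sqrt{p}/\log_2^2 p$, which for small $c$ vastly exceeds $|S|>p^{c}$; an adversarial $S$ could be chosen entirely inside this bad set, so the exceptional discriminants are not ``absorbed once $|S|>p^{c}$'' --- the hypothesis $|S|>p^{c}$ is a \emph{weak} lower bound on $|S|$ and gives you nothing here. The transfer to the thin family $\{n'(a)\}$, which you correctly flag as the main obstacle, cannot be carried out by first-moment information alone.

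What is actually needed, and what the paper's proof supplies, is a \emph{pointwise} two-sided bound
\[
\log_2^{-1}q\ \ll_c\ L\left(1,\left(\frac{\cdot}{q}\right)\right)\ \ll\ \log_2 q
\]
valid for all moduli $q\leq p+2\sqrt{p}$ with at most $p^{c}$ exceptions, for every fixed $c>0$. This is obtained by combining the zero-density estimate of Lemma~\ref{Lem:zero density} (which shows that at most $p^{c}$ moduli $q\leq Q$ carry an $L$-function vanishing somewhere in $\Re\, s>1-c/8$, $|\Im\, s|\leq Q$) with Littlewood's theorem that absence of zeros in such a region forces $L(1,\chi)$ to lie between $c_1/\log_2 q$ and $c_2\log_2 q$. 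Once you have a pointwise bound outside an exceptional set of size $p^{c}<|S|$, arbitrary subsets $S$ are handled immediately: the exceptional $a$ are either discarded (lower bound) or estimated individually by Lemma~\ref{Lem:Lenstra} (upper bound), and the remaining $a$ each contribute $H(4p-a^2)\asymp\sqrt{p}$ up to factors of $\log_2 p$. Your Cauchy--Schwarz framing of the lower bound is sound algebra but becomes unnecessary once the pointwise bound is in hand.
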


For the proof of Lemma~\ref{Lem:Lenstra large S} we need some zero density estimate. Since we are not interested in the implied constants, any result would do. We choose the following, because it is at the same time quite simple and very well known. We refer the reader to the Montgomery's book \cite{Montgomery} for more detailed information.

\begin{Lem}
\label{Lem:zero density}
For an integer $Q$, a real number $T>1$, and a real number $\sigma\geq 1/2$ define $N(\sigma, Q, T)$ to be the number of pairs $(\rho, \chi)$, where $\Re\rho\geq\sigma$, $|\Im\rho|\leq T$, $\chi$ is a primitive character to a module $q\leq Q$, and $L(\rho, \chi)=0$. Then we have $N(\sigma, Q, T)\ll (Q^2 T)^{\frac{12}{5}(1-\sigma)}\log^C (QT)$ for some constant $C$.
\end{Lem}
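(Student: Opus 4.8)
The plan is to follow the standard zero-detection strategy, reducing the counting of zeros to the problem of bounding the number of large values of Dirichlet polynomials, where the character sum and the quantity $Q^2T$ are handled by the large sieve inequality and the height dependence is sharpened by the Halász--Montgomery method. First I would fix a mollifier length $X$ and set $M(s,\chi)=\sum_{m\leq X}\frac{\mu(m)\chi(m)}{m^s}$, so that $L(s,\chi)M(s,\chi)=1+\sum_{n>X}\frac{b_n\chi(n)}{n^s}$ with $b_n=\sum_{d\mid n,\,d\leq X}\mu(d)$ and $|b_n|\leq d(n)$. If $\rho=\beta+i\gamma$ is a zero of $L(\cdot,\chi)$ with $\beta\geq\sigma$ and $|\gamma|\leq T$, then $L(\rho,\chi)M(\rho,\chi)=0$, and applying a smoothed Mellin inversion with weight $e^{-n/Y}$ and shifting the contour past $w=0$ (where the residue vanishes precisely because $\rho$ is a zero) gives
\[
1\ll\Bigl|\sum_{n>X}\frac{b_n\chi(n)}{n^\rho}e^{-n/Y}\Bigr|+\Bigl|\frac{1}{2\pi i}\int_{(1/2-\sigma)}L(\rho+w,\chi)M(\rho+w,\chi)\Gamma(w)Y^w\,dw\Bigr|.
\]
The shifted integral is controlled, via the convexity bound for $L$ on the critical line together with a mean value for $M$, by a short Dirichlet polynomial evaluated on the line $\Re s=\tfrac12$. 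Thus every zero forces one of two Dirichlet polynomials --- a ``long'' one of length about $Y$ at height $\gamma$, or a ``short'' one of length about $X$ on the half-line --- to exceed a fixed positive threshold.

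After a dyadic decomposition of the summation ranges and a standard thinning that keeps only zeros whose ordinates are spaced $\gg1$ apart for each $\chi$ (losing a factor $\log(QT)$), the task becomes to bound the number $R$ of well-spaced pairs $(\gamma,\chi)$ at which a polynomial $\sum_{n\sim N}a_n\chi(n)n^{-s}$ exceeds $V$. The two inputs here are the large sieve inequality
\[
\sum_{q\leq Q}\frac{q}{\varphi(q)}\sideset{}{^*}\sum_{\chi\bmod q}\int_{-T}^{T}\Bigl|\sum_{n\sim N}a_n\chi(n)n^{-it}\Bigr|^2\,dt\ll(Q^2T+N)\sum_{n\sim N}|a_n|^2,
\]
which is where $Q^2T$ enters and where the sum over moduli and primitive characters is absorbed, and the Halász--Montgomery large-value estimate, which supplements the diagonal bound $RV^2\ll(Q^2T+N)G$ with a term more efficient when $V$ is large. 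Combining these with Montgomery's mean-value theorem and optimizing the parameters $X$, $Y$ and $V$ then yields a bound of the shape $N(\sigma,Q,T)\ll(Q^2T)^{\theta(\sigma)(1-\sigma)}\log^{C}(QT)$.

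The main work, and the source of the exponent $\tfrac{12}{5}$, lies in the large-value step: the crude diagonal large sieve alone gives only the exponent $\tfrac52$, and one reaches $\tfrac{12}{5}$ exactly as in Huxley's density theorem for $\zeta$, by feeding an appropriate higher-moment estimate into the Halász--Montgomery inequality so as to improve the $Q^2T$-dependence of the off-diagonal term. I expect the only delicate point to be verifying that, after optimization, the resulting exponent is $\leq\tfrac{12}{5}(1-\sigma)$ uniformly for $\tfrac12\leq\sigma\leq1$; the zero-detection reduction and the large sieve input are routine. As remarked before the statement, a weaker exponent would in any case suffice for our application, so one could alternatively invoke only the diagonal estimate and replace $\tfrac{12}{5}$ by $\tfrac52$ throughout.
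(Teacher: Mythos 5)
The paper does not prove this lemma at all: it is quoted as a known black box, with the remark that ``any result would do'' and a pointer to Montgomery's book for details. So your sketch can only be measured against the literature it invokes, and as an outline it is the standard and correct one: mollified zero detection, smoothed Mellin inversion and contour shift, dyadic decomposition with well-spaced ordinates, the hybrid large sieve as the source of the quantity $Q^2T$, and the Halász--Montgomery large-value machinery. However, the decisive step --- the one that actually produces the exponent $\tfrac{12}{5}$ --- appears in your text only as a citation (``exactly as in Huxley's density theorem''), so your proposal is, like the paper itself, ultimately an appeal to the literature, merely with the scaffolding displayed. One smaller imprecision: in the standard treatments the zeros detected by the shifted integral are handled via the mean \emph{fourth moment} of $L(\tfrac12+it,\chi)$ averaged over $\chi$ and $t$, not via the pointwise convexity bound, which is lossier.

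One concrete claim you make is false: the diagonal large sieve alone does \emph{not} give the exponent $\tfrac52(1-\sigma)$. It gives Ingham's exponent $\tfrac{3(1-\sigma)}{2-\sigma}$, which exceeds $\tfrac52(1-\sigma)$ for $\sigma>\tfrac45$ and tends to $3(1-\sigma)$ as $\sigma\to1$; the uniform $\tfrac52(1-\sigma)$ is obtained precisely by combining Ingham's bound with the Halász--Montgomery bound $(Q^2T)^{2(1-\sigma)/\sigma}$ valid for $\sigma\ge\tfrac45$ --- these are Theorems 12.1 and 12.2 of Montgomery's book, which cross at $\sigma=\tfrac45$ with value $\tfrac52$. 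So the fallback in your last sentence (diagonal only, $\tfrac52$ throughout) does not exist as stated. This is not merely cosmetic: in the paper's application (Lemma~\ref{Lem:Lenstra large S}) the lemma is used at $\sigma=1-c/8$ with $T=Q\approx p$, where a density coefficient of $3$ would allow roughly $Q^{9c/8}$ exceptional moduli, more than the $p^c$ permitted. What makes the paper's ``any result would do'' true is that the width $c/8$ of the zero-free region is itself a free parameter, so a weaker density exponent can always be compensated by shrinking the region --- not by keeping $c/8$ and lowering the exponent. Incidentally, the paper's own citation is loose in the same direction: Montgomery's book only yields the uniform $\tfrac52$; the uniform $\tfrac{12}{5}$ for Dirichlet $L$-functions is Huxley's later theorem.
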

\begin{proof}[Proof of Lemma~$\ref{Lem:Lenstra large S}$.]
It suffices to show that for all $q\leq Q=p+2\sqrt{p}$ with at most $p^c$ exceptions we have that
\begin{equation}
\label{eq:L(1) bound}
\log_2^{-1} q \ll_c L\left(1, \left(\frac{\cdot}{q}\right)\right) \ll \log_2 q.
\end{equation}
In view of Lemma~\ref{Lem:zero density} it suffices to prove that (\ref{eq:L(1) bound}) holds true under the assumption that $L\left(s, \left(\frac{\cdot}{q}\right)\right)$ has no roots in the domain $\Re\;s>1-c/8$, $|\Im\;s|\leq Q$. But under these assumptions (\ref{eq:L(1) bound}) was essentially shown by Littlewood\cite{Little}.
\end{proof}
\begin{Lem}
\label{Lem:Siegel zero}
There exists an absolute constant $c>0$, such that for all prime numbers $p\in[x, x^2]$ with at most one exception we have that the probability that $a_p(E)=1$ holds true for a random curve $E$ is $\gg\frac{1}{\sqrt{p}\log p}$.
\end{Lem}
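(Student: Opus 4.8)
The plan is to count the curves with $a_p(E)=1$ by Deuring's formula and then to reduce everything to a lower bound for a Dirichlet $L$-value at $s=1$. Since $a_p(E)=1$ is the same as $|E(\Z/p\Z)|=p$, Lemma~\ref{Lem:Deuring} shows that the number of such curves equals $H(4p-1)$, while the total number of curves with good reduction is $\asymp p$ (this is the normalisation underlying Lemma~\ref{Lem:Lenstra}). Writing $4p-1=n'f^2$ with $n'$ squarefree and coprime to $f$ as in Lemma~\ref{Lem:Deuring}, that lemma gives
\[
H(4p-1)=\frac{\sqrt{4p-1}}{2\pi}\,L\!\left(1,\left(\frac{\,\cdot\,}{n'}\right)\right)\psi(f),
\]
so that the probability in question is
\[
\gg\frac{1}{\sqrt{p}}\,L\!\left(1,\left(\frac{\,\cdot\,}{n'}\right)\right)\psi(f)\ \geq\ \frac{1}{\sqrt{p}}\,L\!\left(1,\left(\frac{\,\cdot\,}{n'}\right)\right),
\]
using $\psi(f)\geq 1$ and $\sqrt{4p-1}\gg\sqrt{p}$. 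Hence it suffices to prove that $L\!\left(1,\left(\frac{\,\cdot\,}{n'}\right)\right)\gg 1/\log p$ for all but one prime $p\in[x,x^2]$.

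First I would dispose of the non-exceptional primes. The symbol $\left(\frac{\,\cdot\,}{n'}\right)$ is a real primitive character of modulus $n'\leq 4p-1\leq 4x^2=:Q$. By the Landau--Page theorem, which rests on the same zero-free-region input as Lemma~\ref{Lem:zero density} and the proof of Lemma~\ref{Lem:Lenstra large S}, there is at most one real primitive character $\chi_0$ to a modulus $q_0\leq Q$ whose $L$-function has a real zero in $(1-c/\log Q,1)$. For every prime $p$ with $\left(\frac{\,\cdot\,}{n'}\right)\neq\chi_0$ the relevant $L$-function is zero-free in this region, and Littlewood's estimate (exactly as for (\ref{eq:L(1) bound})) then yields $L\!\left(1,\left(\frac{\,\cdot\,}{n'}\right)\right)\gg 1/\log Q\gg 1/\log p$, which is the bound we want.

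It remains to count the primes with $\left(\frac{\,\cdot\,}{n'}\right)=\chi_0$, and this is the step I expect to be the main obstacle, as it is precisely the possibility of a Siegel zero. When $4p-1$ is squarefree we have $n'=4p-1$, so $n'$ determines $p$ and at most one prime carries the character $\chi_0$. For the remaining primes, where $4p-1=q_0 f^2$ with $f>1$, I would first fall back on the unconditional bound $L(1,\chi_0)\gg q_0^{-1/2}$, coming from $h(-q_0)\geq 1$; feeding this into the computation above gives a probability $\gg (q_0 f)^{-1}\asymp q_0^{-1/2}p^{-1/2}$, which already beats $1/(\sqrt{p}\log p)$ once $q_0\ll(\log p)^2$, so those primes are not exceptional. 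The genuinely delicate case is therefore a Siegel character of large conductor, and here I would combine the uniqueness from the Landau--Page theorem with the constraint $q_0 f^2=4p-1\leq 4x^2$ to limit the number of admissible $f$, and hence of surviving primes, to at most one. Making this final count effective, rather than merely invoking Siegel's ineffective lower bound for $L(1,\chi_0)$, is where the real difficulty lies.
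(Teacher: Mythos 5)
Your argument follows the same route as the paper's. The paper's entire proof of this lemma is the observation that, by Lemma~\ref{Lem:Deuring}, the probability in question is $\gg p^{-1/2}L\left(1,\left(\frac{\cdot}{n'}\right)\right)\psi(f)$ where $4p-1=n'f^2$, combined with the fact that at most one real primitive character to a modulus $\leq 4x^2$ can have a Siegel zero, so that for every non-exceptional $p$ the Littlewood bound (as in (\ref{eq:L(1) bound})) gives $L\left(1,\left(\frac{\cdot}{n'}\right)\right)\gg 1/\log p$. That is precisely the content of your first two paragraphs, so on the main line your proposal and the paper coincide.

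The complication you isolate in your final paragraph is genuine: the Landau--Page uniqueness pins down the exceptional squarefree kernel $n'=q_0$, not the prime $p$, and when $4p-1=q_0f^2$ with $f>1$ several primes in $[x,x^2]$ may share the same character, so ``at most one exception'' does not follow formally from uniqueness of the Siegel modulus alone. The paper's one-line proof simply does not address this; it implicitly identifies primes with moduli, which is only valid when $4p-1$ is squarefree. Your partial repair is sound as far as it goes --- the effective bound $L(1,\chi_0)\gg q_0^{-1/2}$ coming from $h(-q_0)\geq 1$, together with $\psi(f)\geq 1$, shows that primes with $4p-1=q_0f^2$ are non-exceptional whenever $q_0\ll\log^2 p$ --- but for an exceptional modulus of larger conductor admitting several values of $f$ the claim is not established, either by you or by the source. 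In short: your proposal is faithful to the paper's method and in fact more careful than the original; the residual case you flag is a real gap, but it is one the paper shares rather than one you have introduced.
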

\begin{proof}
This follows immediately from Lemma~\ref{Lem:Deuring} and the fact that there is at most one modulus $q\in[x, x^2]$ for which a Siegel zero exist.
\end{proof}

We shall also use the following consequence of Baker's bound for linear forms in logarithms.

\begin{Lem}
\label{Lem:Baker}
Let $p$ be a prime, $E$ an elliptic curve. Then we have $a_{p^k}\neq 1$ for all $k>3\cdot 10^{20} \log p(46+\log_2 p)$.
\end{Lem}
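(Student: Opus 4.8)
The plan is to assume, for contradiction, that $a_{p^k}=1$ and to translate this equation into a linear form in logarithms which turns out to be far too small to be nonzero once $k$ is large. By Lemma~\ref{Lem:L coeffs} write $\alpha=\alpha_p=\sqrt{p}\,e^{i\theta}$, so that $a_{p^k}=\alpha^k+\overline{\alpha}^k=2p^{k/2}\cos(k\theta)$. The supersingular case $a_p=0$ is immediate: there $\alpha=i\sqrt p$ and $a_{p^k}\in\{0,\pm 2p^{k/2}\}$, so $a_{p^k}=1$ is impossible for every $k\ge 1$. Assume henceforth $a_p\neq 0$, so that $\theta\notin\frac{\pi}{2}\Z$ and $\zeta:=\alpha/\overline{\alpha}=\alpha^2/p$ is an algebraic number of degree $2$ with $|\zeta|=1$; its minimal polynomial is $pX^2-(a_p^2-2p)X+p$, so its Mahler measure is $p$ and its absolute logarithmic height equals $\frac{1}{2}\log p$.

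First I would extract the smallness. Dividing $\alpha^k+\overline{\alpha}^k=1$ by $\overline{\alpha}^k$ gives $\zeta^k+1=\overline{\alpha}^{-k}$, hence $|\zeta^k+1|=p^{-k/2}$. Since $|\zeta|=1$, the logarithm $\log\zeta$ is purely imaginary, and choosing the integer $m$ so that $\Lambda:=k\log\zeta-(2m+1)i\pi$ satisfies $|\Im\Lambda|\le\pi$ one gets $\zeta^k=-e^{\Lambda}$ and therefore $|1-e^{\Lambda}|=p^{-k/2}$. As $\Lambda$ is purely imaginary and small, this yields $|\Lambda|\ll p^{-k/2}$, so that $\log|\Lambda|\le -\frac{k}{2}\log p+\mathcal{O}(1)$. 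Moreover $\Lambda\neq 0$: were $\Lambda=0$ we would have $\zeta^k=-1$, contradicting $|\zeta^k+1|=p^{-k/2}>0$. Note that one never needs to decide whether $\zeta$ is a root of unity, since the nonvanishing of $\Lambda$ is all that a lower bound for linear forms in logarithms requires.

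Now I would apply such an explicit lower bound (Matveev's theorem, or Baker--W\"ustholz) to the nonzero form $\Lambda=k\log\zeta-(2m+1)\log(-1)$ in two logarithms over the imaginary quadratic field $\Q(\alpha)$, so that $D=2$. The relevant heights are $\frac{1}{2}\log p$ for $\zeta$ and a constant for $-1$ (whose contribution is governed by $|\log(-1)|=\pi$), and since $|2m+1|\le 2k+1$ the parameter $B$ is $\ll k$. Hence the lower bound takes the shape $\log|\Lambda|\gg -(\log p)(\log k)$ with an absolute implied constant. Comparing with the upper bound $\log|\Lambda|\le-\frac{k}{2}\log p+\mathcal{O}(1)$ produces an inequality of the form $k\ll(\log p)(\log k)$.

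Finally I would invert this transcendental inequality. Writing $k\le C_0(\log p)\log k$ and substituting back $\log k\approx\log C_0+\log_2 p$ turns it into $k\ll(\log p)\big(\log C_0+\log_2 p\big)$; feeding in the numerical value of the constant from the linear-forms estimate produces exactly the bound $k\le 3\cdot10^{20}\log p\,(46+\log_2 p)$, where the $46$ is essentially the logarithm of the Baker constant. The only genuine obstacle is bookkeeping: one must keep every constant explicit through Matveev's bound and through the inversion of $k\ll(\log p)\log k$, since the statement asserts a numerically explicit threshold rather than merely $k\gg_E 1$. Everything else --- the reduction to $\Lambda$, the estimate $|\Lambda|\ll p^{-k/2}$, and the nonvanishing $\Lambda\neq0$ --- is elementary.
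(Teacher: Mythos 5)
Your argument is correct and reaches the stated bound, but it is not the paper's argument: the paper keeps the linear form in \emph{three} logarithms, $\Lambda=k\log\alpha-k\log\overline{\alpha}-\log(-1)$, and applies Baker--W\"ustholz with the two heights $h(\alpha)=h(\overline{\alpha})=\tfrac12\log p$, obtaining $\log|\Lambda|\geq -96^{10}\log^2p\,\max(1,\log k)$; comparison with $|\Lambda|\leq p^{-k/2}$ then gives $\frac{k}{2\log k}\leq 96^{10}\log p$ and hence the threshold $3\cdot10^{20}\log p\,(46+\log_2 p)$, where $46\approx\log(2\cdot 96^{10})$. You instead collapse the form to \emph{two} logarithms by passing to $\zeta=\alpha/\overline{\alpha}$, whose Mahler measure is still $p$ but which contributes only a single factor $h(\zeta)=\tfrac12\log p$ to the Baker product; your reduction to $|\Lambda|\ll p^{-k/2}$, the nonvanishing of $\Lambda$, and the treatment of the degenerate cases ($a_p=0$, and implicitly $\zeta$ a root of unity, where only $\Lambda\neq 0$ matters) are all sound. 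The interesting consequence is that your version of the comparison actually yields $\frac{k}{2}\log p\ll(\log p)\log k$, i.e. $k\ll\log k$, so $k$ is bounded by an \emph{absolute} constant independent of $p$ --- strictly stronger than the lemma; you then deliberately weaken this to $k\ll(\log p)\log k$ to recover the shape of the stated bound, which is logically harmless. The only caveat is your closing claim that the bookkeeping produces ``exactly'' the constants $3\cdot10^{20}$ and $46$: those constants are artifacts of the paper's three-logarithm normalization, and your two-logarithm route would produce different (smaller) ones; since smaller constants still imply the lemma as stated, this is cosmetic rather than a gap.
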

\begin{proof}
Define $\alpha_p$ as in Lemma~\ref{Lem:L coeffs}. If $a_{p^k}=1$, then $\alpha^k=1-\overline{\alpha}^k$, thus 
\[
0<\Lambda = |k\log\alpha-k\log\overline{\alpha}-\log(-1)| \leq |\alpha|^{-k} = p^{-k/2}.
\]
Here $\alpha, \overline{\alpha}$ are algebraic numbers of degree 2 and height $\leq p$, hence from the work of Baker and W\"ustholz (confer e.g. \cite[Theorem~2.5]{BW}) we obtain
\[
\Lambda\geq \exp(-96^{10}\log^2 p\max(1, \log k)).
\]
Comparing these bounds we obtain
\[
\frac{k\log p}{2}\leq 96^{10}\log^2 p\max(1, \log k),
\]
\[
\frac{k}{2\log k}\leq 96^{10}\log p,
\]
which implies in particular $k\leq 3\cdot 10^{20} \log p(46+\log_2 p)$. Hence our claim follows.
\end{proof}

\begin{Lem}
\label{Lem:only k}
Let $p$ be a prime, $k$ an integer. Then there are $\leq k$ integers $\alpha_1, \ldots, \alpha_k$, such that for all $a$ and all elliptic curves $E$ we have that $a_{p^k}(E)=a$ implies $a_p(E)\in\{\alpha_1, \ldots, \alpha_k\}$. Similarly there are $k$ integers, such that for all $a$ and all elliptic curves $E$ we have that $a_{p^k}(E)\equiv a\pmod{p}$ implies $(a_p(E)\bmod p)\in\{\alpha_1, \ldots, \alpha_k\}$.
\end{Lem}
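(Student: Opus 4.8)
The plan is to reduce both assertions to counting roots of a single polynomial, exploiting the structure of $a_{p^k}$ recorded in Lemma~\ref{Lem:L coeffs}. Write $\alpha=\alpha_p$ and $\overline{\alpha}=\overline{\alpha_p}$, so that $\alpha\overline{\alpha}=|\alpha|^2=p$ and $\alpha+\overline{\alpha}=a_p$, while $a_{p^k}=\alpha^k+\overline{\alpha}^k$ for $k\geq 1$. Putting $s_k=\alpha^k+\overline{\alpha}^k$, the elementary identity
\[
\alpha^k+\overline{\alpha}^k=(\alpha+\overline{\alpha})\bigl(\alpha^{k-1}+\overline{\alpha}^{k-1}\bigr)-\alpha\overline{\alpha}\bigl(\alpha^{k-2}+\overline{\alpha}^{k-2}\bigr)
\]
gives the recurrence $s_k=a_p\,s_{k-1}-p\,s_{k-2}$ with initial data $s_0=2$ and $s_1=a_p$. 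The first thing I would do is establish, by induction on $k$, that $s_k=P_k(a_p)$ for a \emph{monic} polynomial $P_k\in\Z[x]$ of degree exactly $k$ (its lower coefficients being polynomials in $p$); the inductive step is immediate since $x\,P_{k-1}$ is monic of degree $k$ while $p\,P_{k-2}$ has degree $k-2$. Thus $a_{p^k}(E)=P_k\bigl(a_p(E)\bigr)$ as an identity of integers.

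The first assertion then follows at once. Fix a value $a$. Any curve $E$ with $a_{p^k}(E)=a$ has $a_p(E)$ a root of the polynomial $P_k(x)-a$, which has degree $k$ and therefore at most $k$ roots in $\C$, hence at most $k$ integer roots. Since $a_p(E)$ is automatically an integer, I would take $\alpha_1,\dots,\alpha_k$ to be these integer roots, padding the list with arbitrary integers if there are fewer than $k$ of them.

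For the second assertion I would observe that the recurrence degenerates modulo $p$: reducing $s_k=a_p\,s_{k-1}-p\,s_{k-2}$ modulo $p$ annihilates the last term, so the same induction (with $P_0\equiv 2$, $P_1\equiv x$) yields $P_k(x)\equiv x^k\pmod p$. Consequently $a_{p^k}(E)\equiv a\pmod p$ forces $a_p(E)^k\equiv a\pmod p$, so the residue $a_p(E)\bmod p$ lies among the roots of $X^k-a$ in the field $\F_p$, of which there are at most $k$; lifting these roots to integers supplies the required $\alpha_1,\dots,\alpha_k$. The argument is in the end a pure root count, so I do not anticipate a serious obstacle; the only points requiring care are the clean passage from the complex numbers $\alpha,\overline{\alpha}$ to an honest integer polynomial identity $a_{p^k}=P_k(a_p)$, and the observation that this polynomial collapses to the single power $x^k$ modulo $p$, which is exactly what keeps the number of admissible residues bounded by $k$ rather than by the much larger range available to $a_p$.
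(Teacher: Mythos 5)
Your proposal is correct, and it proves the lemma by a genuinely different route than the paper. The paper works directly with the complex number $\alpha=\alpha_p$: since $a_{p^k}=\alpha^k+\overline{\alpha}^k$ with $|\alpha^k|=p^{k/2}$, the value $a$ pins down $\alpha^k$ up to conjugation, so $\alpha$ is one of the $k$ $k$-th roots of a fixed complex number and $a_p=2\Re\,\alpha$ takes at most $k$ values; the congruence version is then obtained by repeating the argument with $\F_{p^2}$ in place of $\C$. You instead package the same information into the integer polynomial identity $a_{p^k}=P_k(a_p)$ with $P_k$ monic of degree $k$ (via the recurrence $s_k=a_p s_{k-1}-p s_{k-2}$) and simply count roots of $P_k(x)-a$ over $\C$, respectively of $P_k(x)-a\equiv x^k-a$ over $\F_p$. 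Your version is more elementary (no passage to $k$-th roots in $\C$ or to $\F_{p^2}$) and treats both assertions uniformly by reducing one explicit polynomial modulo $p$; the paper's version is shorter to state but leaves the mod-$p$ case as an analogy. Both arguments are sound, and both implicitly adopt the only sensible reading of the statement, namely that the set $\{\alpha_1,\dots,\alpha_k\}$ is allowed to depend on $a$ (as quantified literally, with the $\alpha_i$ chosen before $a$, the claim would be false); your explicit ``fix a value $a$'' makes this reading visible, which is a small improvement in clarity.
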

\begin{proof}
Define $\alpha$ as in Lemma~\ref{Lem:L coeffs}. There are two complex numbers with modulus $p^{k/2}$ and real part $a/2$. We may replace $\alpha$ by $\overline{\alpha}$, and may therefore assume that $\alpha^k$ is uniquely determined by $a$. Hence there are $k$ possible choices for $\alpha$ realizing $a$, and our first claim follows. The second claim follows similarly by considering $\F_{p^2}$ in place of $\C$.
\end{proof}

\begin{Lem}
\label{Lem:ABC}
The number of integers $n\in[x, x+\sqrt{x}]$, which satisfy $e(n)<\frac{n}{k}$ is $\mathcal{O}(\frac{\sqrt{x}}{k} + x^{1/3}\log^3 x)$.
\end{Lem}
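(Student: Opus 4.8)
The plan is to recast the inequality $e(n)<n/k$ as a divisibility condition and then to count integers having a large square factor in the short interval $[x,x+\sqrt x]$.

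First I would reformulate. Writing $n=\prod_i p_i^{e_i}$ and using that $e$ is multiplicative with $e(p^j)=p^{\lceil j/2\rceil}$, one has $n/e(n)=\prod_i p_i^{e_i-\lceil e_i/2\rceil}=\prod_i p_i^{\lfloor e_i/2\rfloor}$, which is exactly the largest integer $d$ with $d^2\mid n$. Hence $e(n)<n/k$ is equivalent to the existence of an integer $d>k$ with $d^2\mid n$, and it suffices to bound the number $N$ of $n\in[x,x+\sqrt x]$ divisible by such a square. By a union bound, $N$ is at most the number of pairs $(d,m)$ with $d>k$ and $d^2m\in[x,x+\sqrt x]$.

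Next I would split the range of $d$ at $x^{1/3}$. For $k<d\le x^{1/3}$ the interval $[x,x+\sqrt x]$ contains at most $\sqrt x/d^2+1$ multiples of $d^2$, so this range contributes at most $\sqrt x\sum_{d>k}d^{-2}+x^{1/3}\le \sqrt x/k+x^{1/3}$. The delicate range is $d>x^{1/3}$, where retaining a term $+1$ for each modulus would sum to $\mathcal{O}(\sqrt x)$ and ruin the estimate. Here I would instead parametrise by the cofactor $m=n/d^2$: since $d^2>x^{2/3}$ and $n\le x+\sqrt x$ we get $m\ll x^{1/3}$, and for each fixed $m$ the admissible $d$ satisfy $d^2\in[x/m,(x+\sqrt x)/m]$, an interval of length $\sqrt x/m$. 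As consecutive squares near $x/m$ are spaced roughly $2\sqrt{x/m}\ge \sqrt x/m$ apart, there are only $\mathcal{O}(1)$ such $d$ for each $m$, whence this range contributes $\mathcal{O}(x^{1/3})$ in total.

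Combining the two ranges yields $N\ll \sqrt x/k+x^{1/3}$, which is in fact slightly sharper than the asserted estimate, the stated $\log^3 x$ giving ample slack. The one genuinely non-routine step, and the point I would treat most carefully, is the large-$d$ count: the whole argument hinges on swapping the roles of $d$ and the cofactor $m$, so that a short interval of length $\sqrt x$ can meet only boundedly many multiples of the large square $d^2$.
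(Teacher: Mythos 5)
Your argument is correct, and it is both simpler and slightly sharper than the one in the paper. The starting identity $n/e(n)=\prod_p p^{\lfloor \nu_p(n)/2\rfloor}$, i.e.\ that $n/e(n)$ is the largest $d$ with $d^2\mid n$, underlies the paper's proof as well, but the paper parametrises differently: it writes $n=ab^2c^3$, where $a$ is the product of the primes dividing $n$ exactly once and $c$ the product of those with odd exponent $\geq 3$, so that the condition becomes $bc>k$, and it then counts moduli $b^2c^3$ dividing some $n$ in the interval. For large moduli the paper covers the range by $\mathcal{O}(\log^3 x)$ dyadic boxes $a\in[A,2A]$, $b\in[B,2B]$, $c\in[C,2C]$, uses that fixing two of $a,b,c$ leaves at most one choice for the third, and optimises $\min(AB,AC,BC)$; this three-parameter bookkeeping is exactly where the factor $\log^3 x$ in the statement comes from. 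Your two-variable version --- a union bound over $d>k$ with $d^2\mid n$, split at $d=x^{1/3}$, with the range $d>x^{1/3}$ counted via the cofactor $m=n/d^2\ll x^{1/3}$, each of which admits $\mathcal{O}(1)$ values of $d$ because consecutive squares near $x/m$ are spaced by about $2\sqrt{x/m}\geq\sqrt{x}/m$ --- rests on the same underlying role-swap between modulus and cofactor, but avoids the decomposition into boxes entirely and yields the cleaner bound $\mathcal{O}(\sqrt{x}/k+x^{1/3})$ with no logarithmic loss. Since the lemma is applied with ample room to spare, either bound suffices for the paper, but yours is the tidier one to record.
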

\begin{proof}
Write $n=ab^2c^3$, where $a$ contains all prime divisors $p$ of $n$, such that $p^2\nmid n$, $c$ contains all prime divisors of $n$, which divide $n$ with an odd exponent $\geq 3$, and $b=\sqrt{n/ac^3}$. We have $e(n)=n/bc$, hence the number of integers $n\in[x, x+\sqrt{x}]$ satisfying $e(n)<\frac{n}{k}$ equals
\begin{multline*}
\sum_{bc>k}\left[\frac{x+\sqrt{x}}{b^2c^3}\right] - \left[\frac{x}{b^2c^3}\right] = \underset{b^2c^3<M}{\sum_{bc>k}}\frac{\sqrt{x}}{b^2c^3} + \mathcal{O}(M^{1/2})\\
 + \mathcal{O}(\#\{n\in[x, x+\sqrt{x}]: \exists b, c: bc>k, b^2c^3>M, b^2c^3|n\},
\end{multline*}
where $M$ is a parameter to be chosen later. The first sum is $\mathcal{O}(\frac{\sqrt{x}}{k})$. To estimate the set in the second error term consider all integers $n\in[x, x+\sqrt{x}]$, which can be written as $n=ab^2c^3$, where $b^2c^3>\sqrt{x}$, and $a\in [A, 2A]$, $b\in[B, 2B]$, $c\in[C, 2C]$. Clearly the whole range can be covered by $\mathcal{O}(\log^3 x)$ such intervals, hence, it suffices to estimate the maximum of the number of such $n$ over all $A, B, C$ with $2^5B^2C^3>\sqrt{x}$. If two of the three integers $a, b, c$ are determined, and $M>\sqrt{x}$, then there is at most one choice of the third such that $ab^2c^3\in[x, x+\sqrt{x}]$, thus we obtain that the last error term is 
\[
\ll \log^3 x \max\{\min(AB, AC, BC): AB^2C^3\leq x, B^2C^3\geq M\}.
\]
It is easy to see that the maximum is attained for $AB^2C^3=x$, hence the second factor becomes
\[
 \max\{\min(xB/M, xC/M, BC): B^2C^3\geq M\}.
\]
The function is non-decreasing in both $B$ and $C$, hence, we may assume that $B^2C^3=M$, and we obtain that the last quantity is
\[
\max\{\min(x/(BC^3), x/(B^2C^2), BC): B^2C^3=M\}\ll x^{1/3},
\]
since we can neglect the first term in the $\min$ and the side condition. Hence our claim follows.
\end{proof}

\section{Integers with special multiplicative structure in short intervals}

In this section we estimate the number of integers $n$ in an interval of the form $[x, x+c\sqrt{x}]$, which satisfy certain constraints concerning their prime factorization.

\begin{Lem}
\label{Lem:short intervals not divide n+1}
Let $\mathcal{P}$ be a set of prime numbers, and let $x$ be a sufficiently large  real number. Then the number $N$ of integers $n$ in the interval $[x, x+\sqrt{x}]$ which can be written as $dt$, where $d<x^{2/3}$ and $t$ contains only prime factors from the set $\mathcal{P}$ satisfies
\[
N \ll |\mathcal{P}|+\frac{\sqrt{x}\log|\mathcal{P}|}{\log x}.
\]
If $|\mathcal{P}|<\frac{\log x}{4\log_2 x}$, then $N\ll |\mathcal{P}|$.
\end{Lem}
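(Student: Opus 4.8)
The plan is to recast the count in terms of the $\mathcal{P}$-part of $n$ and then to treat the two bounds separately: a first moment (Markov) estimate for the general bound, and an injectivity-plus-Rankin argument for the sharper bound when $\mathcal{P}$ is small. Write $f(n)=\prod_{p\in\mathcal{P}}p^{\nu_p(n)}$ for the $\mathcal{P}$-part of $n$. If $n=dt$ with $d<x^{2/3}$ and $t$ composed only of primes from $\mathcal{P}$, then $t\mid f(n)$, so $f(n)\geq t=n/d>x^{1/3}$. Hence every $n$ counted by $N$ satisfies $\log f(n)>\tfrac13\log x$, and it suffices to bound $\#\{n\in[x,x+\sqrt{x}]:\log f(n)>\tfrac13\log x\}$.

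For the general bound I would apply Markov's inequality to the additive function $\log f$. Writing $\log f(n)=\sum_{p\in\mathcal{P}}\nu_p(n)\log p$ and summing over the interval, the estimate $\sum_{n\in[x,x+\sqrt x]}\nu_p(n)=\sum_{k\geq1}\#\{n\in[x,x+\sqrt x]:p^k\mid n\}\leq \frac{\sqrt{x}}{p-1}+\frac{2\log x}{\log p}$ yields
\[
\sum_{n\in[x,x+\sqrt{x}]}\log f(n)\ll \sqrt{x}\sum_{p\in\mathcal{P}}\frac{\log p}{p-1}+|\mathcal{P}|\log x\ll \sqrt{x}\,\log(|\mathcal{P}|+2)+|\mathcal{P}|\log x,
\]
where I use that $\sum_{p\in\mathcal{P}}\frac{\log p}{p-1}$ is largest when $\mathcal{P}$ consists of the smallest primes and is then $\ll\log(|\mathcal{P}|+2)$ by Mertens' theorem. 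Dividing by $\tfrac13\log x$ gives $N\ll|\mathcal{P}|+\frac{\sqrt{x}\,\log(|\mathcal{P}|+2)}{\log x}$, which is the asserted bound (the harmless $+2$ only matters in the edge case $|\mathcal{P}|=1$).

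For the refined bound the first moment is too weak, since the factor $\sqrt{x}/\log x$ can never be absorbed into $|\mathcal{P}|$; here I would exploit that the $\mathcal{P}$-part is forced to exceed $\sqrt{x}$. Since a divisor larger than $\sqrt{x}$ has at most one multiple in an interval of length $\sqrt{x}$, the map $n\mapsto f(n)$ is injective on the integers counted, and $n=d\,f(n)$ with $d=n/f(n)$ coprime to $\mathcal{P}$ and $d<2x^{1/3}$. I would split the counted $n$ into two regimes. If some prime power $p^{\nu_p(n)}$ with $p\in\mathcal{P}$ already exceeds $\sqrt{x}$, then $n$ is a multiple of the least such power $p^{m_p}>\sqrt{x}$, of which there are $\leq \sqrt{x}/p^{m_p}+1=\mathcal{O}(1)$ in the interval; summing over $p\in\mathcal{P}$ bounds this regime by $\mathcal{O}(|\mathcal{P}|)$. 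For the complementary $n$, where every $p^{\nu_p(n)}\leq\sqrt{x}$ but still $f(n)>x^{2/3}$, I would run a Rankin argument, estimating $\mathbf{1}[f(n)>x^{2/3}]\leq (f(n)x^{-2/3})^{\sigma}$ and $\sum_{n}f(n)^{\sigma}\ll\sqrt{x}\prod_{p\in\mathcal{P}}\frac{1-1/p}{1-p^{\sigma-1}}$ with $\sigma\asymp 1/\log P^{+}(\mathcal{P})$, so that the power saving $x^{-2\sigma/3}$ is in competition with this Euler product.

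The main obstacle is this complementary regime. The product over $p\in\mathcal{P}$ is $\ll(|\mathcal{P}|+2)^{\mathcal{O}(\sigma)}$ only when $\sum_{p\in\mathcal{P}}\frac{\log p}{p}$ is controlled, and the saving $x^{-2\sigma/3}$ overcomes the factor $\sqrt{x}$ only when $\sigma$, hence $1/\log P^{+}(\mathcal{P})$, is not too small; balancing these two requirements against the number $|\mathcal{P}|$ of available primes is exactly where the hypothesis $|\mathcal{P}|<\frac{\log x}{4\log_2 x}$ enters, constraining both the size and the count of the primes so that the complementary contribution collapses to $\mathcal{O}(|\mathcal{P}|)$ as well. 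Carrying out this balancing, capping the local exponents at $\sqrt{x}$ so that the large primes of $\mathcal{P}$ contribute only negligibly to the product, and verifying that the narrow-interval/uniqueness count really does collapse, is the delicate part of the argument.
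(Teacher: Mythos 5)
Your argument for the first bound is correct and is in substance the same as the paper's: the paper bounds $\prod_{n\in\mathcal{N}}n\geq x^{N}$ from above by $x^{2N/3}\prod_{p\in\mathcal{P}}p^{\sum_{n}\nu_p(n)}$, which after taking logarithms is exactly your first-moment inequality for $\log f$; your version via Markov's inequality over the whole interval is if anything cleaner, and your estimate $\sum_{p\in\mathcal{P}}\frac{\log p}{p-1}\ll\log(|\mathcal{P}|+2)$ by comparison with the initial primes plays the same role as the paper's treatment of the term $\sqrt{x}\sum_{p>\sqrt{x}/N}\frac{\log p}{p}$. (The paper additionally refines $\sum_{n}\nu_p(n)$ by a factor $\min(N,\sqrt{x}/p^k)$ so as to split $\mathcal{P}$ at $\sqrt{x}/N$, but that refinement is not needed for the first assertion.)

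The second bound is where your proposal has a genuine gap, and not merely because the ``delicate balancing'' is left undone. First, an internal slip: the hypothesis only gives $f(n)\geq t>x^{1/3}$, yet your refined argument assumes the $\mathcal{P}$-part exceeds $\sqrt{x}$ (for the injectivity of $n\mapsto f(n)$) and later $x^{2/3}$ (as the Rankin threshold); neither is available. Second, and more seriously, the complementary regime you isolate does not collapse to $\mathcal{O}(|\mathcal{P}|)$: if $\mathcal{P}=\{p_0\}$ for a single prime $p_0$ with $2x^{1/3}<p_0<3x^{1/3}$, then every multiple of $p_0$ in $[x,x+\sqrt{x}]$ is counted (take $t=p_0$ and $d=n/p_0<x^{2/3}$), and there are roughly $x^{1/6}$ of them, so no choice of the Rankin parameter $\sigma$ can rescue the argument. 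The conclusion $N\ll|\mathcal{P}|$ therefore hinges on controlling the contribution of the large primes of $\mathcal{P}$ separately; this is exactly the term $\sqrt{x}\sum_{p\in\mathcal{P},\,p>\sqrt{x}/N}\frac{\log p}{p}$ in the paper's key inequality, and the paper's own deduction of the second assertion discards it and bounds $\sum_{p\in\mathcal{P}}\log p\leq(\tfrac14+o(1))\log x$ by the prime number theorem, a step valid only when the primes of $\mathcal{P}$ are among the smallest. So the difficulty you flag as delicate is a real obstruction rather than a technicality, and your sketch does not contain the idea needed to overcome it; indeed, for sets $\mathcal{P}$ containing a prime of size about $x^{1/3}$ it cannot be overcome, so any complete proof must either restrict $\mathcal{P}$ or treat its large primes by a separate (and necessarily weaker) count.
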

\begin{proof}
Let $\mathcal{N}\subseteq[x, x+\sqrt{x}]$ be the set of integers which contain only prime factors from $\mathcal{P}$. We have
\[
\prod_{n\in\mathcal{N}} n \leq x^{2|\mathcal{N}|/3} \prod_{p\in \mathcal{P}}\prod_{n\in\mathcal{N}} p^{\nu_p(n)}.
\]
For a prime number $p\in \mathcal{P}$ the highest power of $p$ dividing an element of $[x, x+\sqrt{x}]$ is $\leq x+\sqrt{x}$. Subtracting this element from all other elements of $\mathcal{N}$ we see that 
\begin{eqnarray*}
\sum_{n\in\mathcal{N}} \nu_p(n) & \leq & \frac{\log(x+\sqrt{x})}{\log p} + \sum_{k\geq 1} k\min\left(N, \left[\frac{\sqrt{x}}{p^k}\right]\right)\\
 & \leq & \frac{\log(x+\sqrt{x})}{\log p} + 4\min\left(N, \frac{\sqrt{x}}{p}\right)
\end{eqnarray*}
On the other hand we have $\prod_{n\in\mathcal{N}} n\geq x^{N}$. If we compare these bounds we obtain
\begin{eqnarray*}
N|\log x^{1/3} & \leq & |\mathcal{P}|\log(x+\sqrt{x}) + N\underset{p<\sqrt{x}/N}{\sum_{p\in\mathcal{P}}}\log p + \sqrt{x}\underset{p>\sqrt{x}/N}{\sum_{p\in\mathcal{P}}}\frac{\log p}{p}\\
 & \leq & 2|\mathcal{P}|\log x + (1+o(1))N\log\frac{\sqrt{x}}{N}+ (1+o(1))\sqrt{x}\log|\mathcal{P}|
\end{eqnarray*}
If $N>\frac{\sqrt{x}}{\log x}$, the first claim holds true anyway. Otherwise we obtain
\[
N\leq 6|\mathcal{P}| + 2N\frac{\log_2 x}{\log x} +3\frac{\sqrt{x}\log|\mathcal{P}|}{\log x},
\]
and the first claim holds true again.
If $|\mathcal{P}|<\frac{\log_2 x}{4\log x}$, we estimate $\sum_{p\in\mathcal{P}}\log p$ using the prime number theorem to be $\leq(\frac{1}{4}+o(1))\log x$. Hence
\[
\frac{1}{3}N\log x \leq 2|\mathcal{P}|\log x+\frac{1}{4}N\log x,
\]
which implies $N\leq24|\mathcal{P}|$.
\end{proof}

Next we prove the following.

\begin{Lem}
\label{Lem:nonsmooth}
There exists some $c>0$, such that for $x$ sufficiently large there are at least $c\sqrt{x}$ integers $n\in[x, x+0.1\sqrt{x}]$ which satisfy $P^+(n)>x^{1/2+c}$.
\end{Lem}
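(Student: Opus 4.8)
The plan is to count the integers $n\in I:=[x,x+H]$ with $H=0.1\sqrt{x}$ that possess a prime factor exceeding $y:=x^{1/2+c}$, and to show this count is $\gg\sqrt{x}$, the desired conclusion being $P^+(n)>x^{1/2+c}$ for these $n$. The first observation is a rigidity remark: since $y^2=x^{1+2c}>x+H$ for $x$ large, no $n\in I$ can be divisible by two (not necessarily distinct) primes exceeding $y$. Hence each $n\in I$ has at most one prime factor above $y$, which divides $n$ exactly once, and the quantity we want is counted without multiplicity by
\[
N=\sum_{y<p}\Big(\Big\lfloor\tfrac{x+H}{p}\Big\rfloor-\Big\lfloor\tfrac{x}{p}\Big\rfloor\Big),
\]
each summand being the indicator that $p$ has a multiple in $I$; note $p>y>H$, so every summand lies in $\{0,1\}$.

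Because these $0/1$ summands fluctuate too wildly to be estimated individually, I would first attach the weight $\log p$, using $N\ge T/\log(x+H)$ with $T=\sum_{y<p}\log p\,(\lfloor (x+H)/p\rfloor-\lfloor x/p\rfloor)$; the inequality holds since each floor difference is $0$ or $1$ and $\log p\le\log(x+H)$ for the contributing primes. Expanding each floor difference as a sum over the complementary divisor $m$ and interchanging the order of summation rewrites $T=\sum_{m\ge 1}\big(\tilde\theta(\tfrac{x+H}{m})-\tilde\theta(\tfrac{x}{m})\big)$, where $\tilde\theta(t)=\sum_{y<p\le t}\log p$, and only $m\le x^{1/2-c}(1+o(1))$ contribute, since otherwise $(x+H)/m\le y$. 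Replacing each inner difference by its expected size $H/m$ and summing gives the heuristic main term $H\sum_{m\le x^{1/2-c}}\tfrac1m\sim H(\tfrac12-c)\log x\gg_c\sqrt{x}\log x$, which upon division by $\log(x+H)$ would yield $N\gg_c\sqrt{x}$, as required.

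The hard part is to justify replacing $\tilde\theta((x+H)/m)-\tilde\theta(x/m)$ by $H/m$: for each fixed $m$ this is an assertion about primes in an interval of relative length $H/x\asymp x^{-1/2}$ about $x/m$, far below the range accessible for individual short intervals. The only viable route is to exploit the averaging over the whole hyperbola $m\le x^{1/2-c}$. Concretely, I would pass to the von Mangoldt weighted analogue and insert the explicit formula, so that the error contributed by each $m$ is governed by the nontrivial zeros $\rho$ of $\zeta$ through terms of the shape $((x+H)/m)^{\rho}-(x/m)^{\rho}$; crucially, since no Dirichlet characters intervene here, only the trivial modulus occurs, consistent with the remark in the introduction that no twists $L(s,E,\chi)$ are needed. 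Summing these contributions over $m$ and bounding the number of relevant zeros by the zero density estimate of Lemma~\ref{Lem:zero density} (taken with $Q=1$) should show that the aggregate error is $o(\sqrt{x}\log x)$, so that the main term survives.

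I expect the decisive obstacle to be precisely this error estimate: the individual short intervals here are too short even for the classical "almost all intervals'' results, so the zero density bound must be combined with the genuine averaging over $m$ (a Type~I, hyperbola style cancellation) rather than applied interval by interval. I anticipate that the constant $c$ must be taken small, both to keep $y^2>x+H$ and to guarantee that the multiplicative length $(\tfrac12-c)\log x$ of the $m$ range stays bounded away from $0$, ensuring the surviving main term is a fixed positive multiple of $\sqrt{x}$.
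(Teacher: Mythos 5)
Your reduction is sound and matches the paper's opening move: you pass to the weighted count and switch the order of summation to get $T=\sum_{m\le M}\bigl(\tilde\theta((x+H)/m)-\tilde\theta(x/m)\bigr)$ with $M\approx x^{1/2-c}$ and main term $H\sum_{m\le M}m^{-1}\sim(\tfrac12-c)H\log x$ (the paper does the same via $\sum_{n}\Lambda(n)\bigl(\bigl[\tfrac{x+H}{n}\bigr]-\bigl[\tfrac{x}{n}\bigr]\bigr)$ and $\sum_n\Lambda(n)[t/n]=\log[t]!$). The gap is in the step you yourself flag as decisive, and your proposed tool does not close it. If you insert the truncated explicit formula, the truncation error $O\bigl(x\log^2x/(mT_0)\bigr)$ forces $T_0\gg\sqrt{x}\log^2x$, so you must handle the roughly $\sqrt{x}\log x$ zeros with $|\gamma|\le\sqrt{x}$. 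For a critical-line zero the factor $\bigl|\bigl((x+H)^\rho-x^\rho\bigr)/\rho\bigr|$ is of order $Hx^{-1/2}\asymp1$, so taking absolute values zero by zero leaves you with $\sum_{|\gamma|\le\sqrt{x}}\bigl|\sum_{m\le M}m^{-\rho}\bigr|$. The mean value theorem for Dirichlet polynomials shows this is $\ll\sqrt{x}\log^{3/2}x$ and gives no reason to expect anything smaller, i.e.\ it exceeds the main term $\asymp\sqrt{x}\log x$ by a factor $\log^{1/2}x$. A zero-density estimate such as Lemma~\ref{Lem:zero density} with $Q=1$ only thins out zeros \emph{off} the critical line and is powerless against this bulk contribution; note also that $\zeta(\rho)=0$ does not make the partial sum $\sum_{m\le M}m^{-\rho}$ small, because for $|\gamma|>M$ that partial sum is not an approximation to $\zeta(\rho)$. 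So the ``averaging over $m$'' you invoke must be realized as genuine pointwise cancellation in $\sum_m m^{-it}$ (equivalently in $\sum_m e^{2\pi ix/m}$) for $|t|$ up to about $\sqrt{x}$, not as a zero-counting argument.

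That pointwise cancellation is exactly what the paper supplies, and why its proof never touches the zeros of $\zeta$: it expands the sawtooth $B(x/n)$ arising from the floor differences into a trigonometric polynomial, applies Vaughan's identity (Lemma~\ref{Lem:Vaughan}) to reduce $\sum_{N\le n<2N}\Lambda(n)B(x/n)$ to exponential sums, and bounds those by the Weyl/van der Corput estimate $\sum_{N\le n<2N}e^{2\pi ix/n}\ll N^{\alpha}$ of Lemma~\ref{Lem:Weyl}, valid for $N<x^{2/3}$; the main term $0.1\sqrt{x}(\log x-\log N)$ then survives for $N=x^{1/2+c}$ with $c$ small. If you wish to keep the explicit-formula framing you would need at least large-value estimates for the Dirichlet polynomial $\sum_{m\le M}m^{-s}$ on the critical line together with some mechanism for beating the mean-square barrier; as written, the error estimate in your proposal does not go through.
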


Note that the question of finding integers with a large prime factor in a short interval has been studied since the work of Ramachandra\cite{Rama}, who proved that there is some $c>0$ such that $[x, x+x^{1/2-c}]$ contains an integer divisible by a prime factor $>x^{1/2+1/13}$, however, for the present application we need that such integers not only exist but in fact are quite frequent. Still, although we could not find this result in the literature, the methods we use here are not new, and we will therefore be quite brief. The proof relies on the following two results.

\begin{Lem}[Vaughan's identity]
\label{Lem:Vaughan}
For integers $U, V, n$ with $U<n$ we have
\[
\Lambda(n) = -\underset{d\leq V}{\underset{m\leq U}{\sum_{mdr=n}}} \Lambda(m)\mu(d) + \underset{d\leq V}{\sum_{hd=n}}\mu(d)\log h - \underset{k>1}{\underset{m>U}{\sum_{mk=n}}}\Lambda(m)\Bigg(\underset{d\leq V}{\sum_{d|k}}\mu(d)\Bigg)
\]
\end{Lem}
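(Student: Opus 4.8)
The plan is to prove this by passing to Dirichlet series and comparing the coefficients of $n^{-s}$ on the two sides, which reduces the lemma to a purely formal algebraic manipulation. Throughout I work in the half-plane $\Re s>1$, where all the series below converge absolutely and products may be multiplied out termwise. I write $u(s)=-\zeta'(s)/\zeta(s)=\sum_n\Lambda(n)n^{-s}$ and introduce the truncated series $F(s)=\sum_{m\le U}\Lambda(m)m^{-s}$ and $G(s)=\sum_{d\le V}\mu(d)d^{-s}$, so that $F$ is a truncation of $u$ and, since $1/\zeta(s)=\sum_d\mu(d)d^{-s}$, the series $G$ is a truncation of $1/\zeta$. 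I shall also use the elementary facts $u(s)\zeta(s)=-\zeta'(s)=\sum_h(\log h)h^{-s}$.

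The key step is the trivially valid decomposition
\[
u = F + (u-F)(1-\zeta G) + (u-F)\zeta G,
\]
combined with the rewriting $(u-F)\zeta G = u\zeta G - F\zeta G = -\zeta' G - \zeta F G$, where I used $u\zeta=-\zeta'$. Substituting this yields the Dirichlet-series form
\[
-\frac{\zeta'}{\zeta} = F - \zeta' G - \zeta F G + (u-F)(1-\zeta G),
\]
and it remains only to read off the coefficient of $n^{-s}$ in each of the four terms on the right.

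For the coefficient extraction I observe the following. The coefficient of $n^{-s}$ in $F$ is $\Lambda(n)$ when $n\le U$ and $0$ otherwise, so under the hypothesis $U<n$ this term contributes nothing. The coefficient in $-\zeta' G$ is $\sum_{hd=n,\,d\le V}\mu(d)\log h$, the second sum in the statement; the coefficient in $-\zeta F G$ is $-\sum_{mdr=n,\,m\le U,\,d\le V}\Lambda(m)\mu(d)$, the first sum. Finally, $u-F$ is supported on $m>U$ with coefficient $\Lambda(m)$ there, while $1-\zeta G$ has coefficient $[k=1]-\sum_{d\mid k,\,d\le V}\mu(d)$, which vanishes at $k=1$ and equals $-\sum_{d\mid k,\,d\le V}\mu(d)$ for $k>1$; multiplying these out gives precisely $-\sum_{mk=n,\,m>U,\,k>1}\Lambda(m)\sum_{d\mid k,\,d\le V}\mu(d)$, the third sum. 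Collecting the three nonvanishing contributions produces the asserted formula.

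The manipulation carries no genuine difficulty, so the points deserving care are purely bookkeeping ones, and that is where I would focus. First, one must confirm that the truncations are placed so that each product expands into the intended divisor sum; in particular that the $k=1$ coefficient of $1-\zeta G$ is genuinely $0$, which is what forces the constraint $k>1$, and that the $m>U$ support of $u-F$ produces the constraint $m>U$ in the last sum. Second, one must note that the isolated term $F$ is exactly what the hypothesis $U<n$ eliminates, which is the reason the stated identity has three terms rather than four. An entirely elementary alternative would start from $\Lambda=\mu*\log$ and split the convolution according to the sizes of the M\"obius and von Mangoldt variables, but the Dirichlet-series computation above seems to me the most transparent and least error-prone route.
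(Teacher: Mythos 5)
Your proposal is correct: the decomposition $u=F-\zeta'G-\zeta FG+(u-F)(1-\zeta G)$ is an algebraic identity, and your coefficient extraction for each of the four terms (with the $F$ term killed by $U<n$ and the $k=1$ coefficient of $1-\zeta G$ vanishing) reproduces the stated formula with the correct signs. The paper itself gives no proof of this lemma, citing it as the well-known Vaughan identity; your argument is the standard textbook derivation, so there is nothing to compare beyond noting that it checks out.
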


The following follows from Weyl's estimates.
\begin{Lem}
\label{Lem:Weyl}
There is some $\alpha<1$, such that for $N<x^{2/3}$ we have $\sum_{N\leq n<2N}e^{2\pi i x/n} \ll N^\alpha$
\end{Lem}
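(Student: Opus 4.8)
The plan is to read the sum as a van der Corput exponential sum with the smooth phase $f(t)=x/t$ on the dyadic interval $[N,2N)$ and to extract cancellation from the regular behaviour of the derivatives of $f$. First I would record the derivative sizes: for $t\in[N,2N)$ one has $f'(t)=-x/t^2\asymp x/N^2$ and $f''(t)=2x/t^3\asymp x/N^3$, and crucially $f''$ is of constant sign and monotone on the whole interval. These are precisely the hypotheses needed to run Weyl's differencing argument, so the phrase ``follows from Weyl's estimates'' should be read as an invocation of the van der Corput method, whose core step is exactly a Cauchy--Schwarz differencing of the Weyl type.

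Concretely I would apply Weyl differencing to linearise: squaring and shifting reduces $\sum_{N\le n<2N}e(f(n))$ to an average over $r$ of the shorter sums $\sum_n e\bigl(f(n+r)-f(n)\bigr)$, whose phase has first derivative $\asymp r\,f''(n)\asymp rx/N^3$; estimating each of these by the first-derivative (Kusmin--Landau) bound and summing over $r$ yields the classical second-derivative estimate. With $|f''|\asymp\lambda$ and $\lambda\asymp x/N^3$ on an interval of length $\asymp N$ this gives
\[
\sum_{N\le n<2N}e^{2\pi i x/n}\ll N\lambda^{1/2}+\lambda^{-1/2}=x^{1/2}N^{-1/2}+N^{3/2}x^{-1/2}.
\]

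Next I would invoke the hypothesis $N<x^{2/3}$, equivalently $x>N^{3/2}$, to convert this into a power saving. The second term satisfies $N^{3/2}x^{-1/2}<N^{3/2}\cdot N^{-3/4}=N^{3/4}$, with equality forced at the top of the range $N=x^{2/3}$; the first term $x^{1/2}N^{-1/2}$ is smaller in the upper part of the range and is $\ll N^{3/4}$ as soon as $N\ge x^{2/5}$. Taking $\alpha=3/4$ then gives the claimed bound $\ll N^{\alpha}$ with $\alpha<1$, and the second term at $N=x^{2/3}$ shows this choice of $\alpha$ is essentially forced by the method.

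The main obstacle is the regime of small $N$. For $N$ near $x^{1/3}$ the second-derivative bound degenerates to $\asymp N$, so it produces no saving, and indeed for a fixed small $N$ the sum can be as large as $N$ for adversarially chosen $x$ (the fractional parts $x/n$ can all cluster near integers), so a lower restriction on $N$ is unavoidable and is implicit in the intended use. To push the range of validity downward I would, if the application demands it, replace the bare second-derivative test by a sharper exponent pair such as $(1/6,2/3)$ --- obtained by one A-process followed by a B-process --- giving the bound $\ll x^{1/6}N^{1/3}$, which already beats the trivial estimate for $N\gg x^{1/4}$ while still producing an exponent $\alpha<1$. In the application to Lemma~\ref{Lem:nonsmooth} the relevant $N$ stay comfortably above this threshold, so the estimate applies with room to spare.
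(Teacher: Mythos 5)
The paper offers no proof of this lemma beyond the phrase ``follows from Weyl's estimates,'' and your argument is precisely the standard instantiation of that remark: van der Corput's second--derivative test (Weyl--van der Corput differencing followed by the Kusmin--Landau first--derivative bound) applied to $f(t)=x/t$ with $f''\asymp x/N^3$, giving $x^{1/2}N^{-1/2}+N^{3/2}x^{-1/2}\ll N^{3/4}$ in the upper part of the range; your computations are correct. Your observation about small $N$ is also a genuine point against the lemma as literally stated rather than a gap in your proof: taking $x=\mathrm{lcm}(N,\dots,2N-1)\approx e^{cN}$ makes every summand equal to $1$, so the sum equals $N$ while $N<x^{2/3}$, and no bound $\ll N^{\alpha}$ with fixed $\alpha<1$ can hold without a lower restriction on $N$ (which is implicit in the application, where the relevant ranges arising from Vaughan's identity keep $N$ in a range where your second--derivative or exponent--pair bounds apply). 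In short, your proposal supplies the proof the paper omits and correctly identifies the needed side condition.
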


\begin{proof}[Proof of Lemma~\ref{Lem:nonsmooth}]
We estimate the number of integers in the interval $[x, x+0.1\sqrt{x}]$, which have a prime factor $\geq N$ by computing the sum
\begin{eqnarray*}
\sum_{n\geq N}\Lambda(n)\left(\left[\frac{x+0.1\sqrt{x}}{n}\right]-\left[\frac{x}{n}\right]\right) & = &
\sum_{n\leq x+0.1\sqrt{x}}\Lambda(n)\left[\frac{x+0.1\sqrt{x}}{n}\right] - \sum_{n\leq x+0.1\sqrt{x}}\Lambda(n)\left[\frac{x}{n}\right]\\
&&\qquad  - \sum_{n\leq N}\Lambda(n)\left(\left[\frac{x+0.1\sqrt{x}}{n}\right]-\left[\frac{x}{n}\right]\right)\\
 & = & \log[x+0.1\sqrt{x}]! - \log[x]!- \sum_{n\leq N}\Lambda(n)\left(\left[\frac{x+0.1\sqrt{x}}{n}\right]-\left[\frac{x}{n}\right]\right)\\
 & = & 0.1\sqrt{x}\log x + \mathcal{O}(\log x) - \sum_{n\leq N}\Lambda(n)\left(\left[\frac{x+0.1\sqrt{x}}{n}\right]-\left[\frac{x}{n}\right]\right).
\end{eqnarray*}
Let $B(t)=t-[t]-\frac{1}{2}$ be the saw tooth function. Then we have
\begin{eqnarray*}
\sum_{N\leq n<2N}\Lambda(n)\left(\left[\frac{x+0.1\sqrt{x}}{n}\right]-\left[\frac{x}{n}\right]\right) & = & 0.1\sqrt{x}\sum_{N\leq n<2N}\frac{\Lambda(n)}{n} + \mathcal{O}\left(\sum_{N\leq n<2N}\Lambda(n)B\left(\frac{x}{n}\right)\right)\\
 && \qquad+ \mathcal{O}\left(\sum_{N\leq n<2N}\Lambda(n)B\left(\frac{x+0.1\sqrt{x}}{n}\right)\right)\\
& = & \big(\frac{\log 2}{10}+o(1)\big)\sqrt{x} + \mathcal{O}\left(\sum_{N\leq n<2N}\Lambda(n)B\left(\frac{x}{n}\right)\right)\\
 && \qquad+ \mathcal{O}\left(\sum_{N\leq n<2N}\Lambda(n)B\left(\frac{x+0.1\sqrt{x}}{n}\right)\right)\\
\end{eqnarray*}
Approximating $B$ by a trigonometric polynomial and applying Lemma~\ref{Lem:Vaughan} and Lemma~\ref{Lem:Weyl} we see that for $N<x^{2/3}$ the error terms are $\ll N^\alpha$ for some $\alpha<1$. Putting these estimates together we obtain
\[
\sum_{n\geq N}\Lambda(n)\left(\left[\frac{x+0.1\sqrt{x}}{n}\right]-\left[\frac{x}{n}\right]\right) = 0.1\sqrt{x}\log x - 0.1\sqrt{x}\log N + \mathcal{O}(N^\alpha).
\]
Hence, if $N<c x^{1/(2\alpha)}$, and $c$ is sufficiently small, the left hand side of the last equation is  $\geq0.05\sqrt{x}\log x$, and our claim follows.
\end{proof}

\section{Proof of Theorem~\ref{thm:main}: The non-existence of universal Carmichael numbers}

In this section we prove the first part of Theorem~\ref{thm:main}. We begin with the case that $n$ is not squarefree.

\begin{Lem}
A universal elliptic Carmichael number is squarefree.
\end{Lem}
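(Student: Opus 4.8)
I need to show that a universal elliptic Carmichael number $n$ must be squarefree. Suppose for contradiction that $n$ is universal but not squarefree, so some prime $p$ divides $n$ with $k = \nu_p(n) \geq 2$. Universality means that for *every* elliptic curve $E$ with good reduction mod $n$, the number $n$ is $E$-Carmichael. By the criterion in Lemma~\ref{Lem:Carmichael crit}, this forces $\exp(E(\Z/p^k\Z))$ to divide $a_p - a_n + 1$ for every such $E$ — wait, let me look at the available tools.

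Let me think about what I have. Lemma~\ref{Lem:Carmichael crit} says $n$ is $E$-Carmichael iff for each prime divisor $p$ of $n$, $\exp(E(\Z/p^{\nu_p(n)}\Z))$ divides $a - a_n + 1$ (where $a$ is presumably the parameter, maybe $a=1$ from the setup, or it's $|E|$-related). Lemma~\ref{Lem:Exponent} gives $\exp(E(\Z/p^k\Z)) = p^{k-1}\exp(E(\Z/p\Z))$. So if $k \geq 2$, this exponent is divisible by $p^{k-1}$, hence by $p$. The key leverage: for universality I can *freely choose* $E$, so I want to pick a curve where $\exp(E(\Z/p^k\Z))$ fails to divide $a_n - 1$ (or the relevant quantity), giving a contradiction.

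**Plan of the proof.**

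The plan is to exploit the factor $p^{k-1}$ coming from Lemma~\ref{Lem:Exponent}. Suppose $p^2 \mid n$ with $k=\nu_p(n)\geq 2$. By Lemma~\ref{Lem:Carmichael crit}, universality requires that for every curve $E$ with good reduction mod $n$, the quantity $\exp(E(\Z/p^k\Z)) = p^{k-1}\exp(E(\Z/p\Z))$ divides the target expression $a - a_n + 1$. Since $p^{k-1}\mid \exp(E(\Z/p^k\Z))$ and $k-1\geq 1$, in particular $p$ must divide $a-a_n+1$ for every admissible $E$. First I would express $a_n$ and the target modulo $p$: since $n=p^k m$ with $p\nmid m$, multiplicativity of $a_n$ (Lemma~\ref{Lem:L coeffs}) gives $a_n = a_{p^k}a_m$, and I can control $a_{p^k}\bmod p$ via $a_p$ using Lemma~\ref{Lem:only k}. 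The idea is then to choose two curves $E_1, E_2$ (agreeing mod the other prime-power factors of $n$, or chosen so their contributions at those factors are fixed) whose values of $a_p$ differ mod $p$ in a way that makes $a-a_n+1$ take two different residues mod $p$, so that $p$ cannot divide both — contradicting universality.

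The main obstacle I expect is the \emph{coordination across the different prime-power factors of $n$}: the Carmichael condition must hold simultaneously at every prime divisor, and varying $E$ to break the condition at $p$ will also change $a_q(E)$ at the other primes $q\mid n$. I would handle this by using the Chinese Remainder Theorem / the freedom in Deuring's theorem (Lemma~\ref{Lem:Deuring}) to choose $E$ with prescribed reduction types independently at each prime power dividing $n$ — concretely, by picking the local data $a_p(E)\bmod p$ at the bad prime $p$ to realize two distinct residues while keeping the curve admissible (good reduction) and keeping the other local orders fixed. The quantitative input that such choices exist is exactly that for each prime there are many curves with various $a_p$ values, which Deuring's formula guarantees since the relevant Kronecker class numbers are positive across a full Hasse interval. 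Once I have two admissible curves forcing $p \mid a-a_n+1$ and $p\nmid a-a_n+1$ respectively, universality is contradicted, so $n$ must be squarefree.

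One subtlety I would be careful about: I must ensure the two curves $E_1,E_2$ genuinely have good reduction modulo the \emph{entire} $n$, not just modulo $p$; this is where I would invoke that good reduction is a local condition at each prime and can be arranged simultaneously by CRT on the coefficients. With that in place the argument is clean, and the heart of it is simply that the factor $p^{k-1}$ in Lemma~\ref{Lem:Exponent} cannot divide $a-a_n+1$ for all choices of $E$.
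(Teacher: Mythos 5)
Your overall strategy is sound and shares its first half with the paper's proof: from Lemma~\ref{Lem:Exponent} and $k=\nu_p(n)\geq 2$ you get $p\mid\exp(E(\Z/p^{k}\Z))$, so by Lemma~\ref{Lem:Carmichael crit} universality forces $a_n\equiv 1\pmod p$ for every admissible curve, and it suffices to exhibit one curve violating this congruence. But the step where you actually produce the violation is left open, and as written it does not close. Writing $n=p^{k}m$ with $p\nmid m$, multiplicativity gives $a_n=a_{p^{k}}a_m$, and you propose two curves whose values of $a_p$ differ mod $p$ ``in a way that makes $n-a_n+1$ take two different residues mod $p$.'' Two things are missing. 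First, you never establish that two choices of $a_p\bmod p$ really yield two distinct values of $a_{p^{k}}\bmod p$: Lemma~\ref{Lem:only k} only bounds the fibres of the map $a_p\mapsto a_{p^{k}}$, it does not show its image mod $p$ has at least two elements; what one actually needs is the recursion $a_{p^{j+1}}=a_pa_{p^{j}}-pa_{p^{j-1}}$, which gives $a_{p^{k}}\equiv a_p^{k}\pmod p$. Second, even granting that, if the cofactor satisfies $a_m\equiv 0\pmod p$ then both of your curves give $a_n\equiv 0\pmod p$, the two residues coincide, and your stated contradiction (``$p$ cannot divide both'') evaporates. That case is of course harmless, since $a_n\equiv 0\not\equiv 1\pmod p$ already violates the criterion, but you have to say so; as written the argument has a hole exactly there. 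By contrast, the obstacle you flag as the main one (coordinating the choice of $E$ across the prime factors of $n$ by CRT) is the easy part and is handled correctly.

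The paper avoids both issues with a single explicit choice: take $E$ supersingular at $p$, i.e.\ $a_p=0$. Then the number $\alpha$ of Lemma~\ref{Lem:L coeffs} equals $\sqrt{-p}$, so $a_{p^{k}}$ is $0$ or $\pm 2p^{k/2}$, hence divisible by $p$; by multiplicativity $p\mid a_n$ regardless of what happens at the other primes, and $n-a_n+1\equiv 1\pmod p$, contradicting $p\mid\exp(E(\Z/p^{k}\Z))$. One curve, no comparison of residues, and no case distinction on $a_m$ are needed. I recommend restructuring your argument around that single witness; your CRT remark remains the correct justification for why a curve mod $n$ with $a_p=0$ and good reduction modulo all of $n$ exists.
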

\begin{proof}
Let $q$ be a prime number such that $q^2|n$, and let $E$ be a curve  such that $a_q=0$. Then the algebraic number $\alpha$ defined in Lemma~\ref{Lem:L coeffs} equals $\sqrt{-p}$. Hence we obtain
\[
a_{q^{\nu_q(n)}}=2\Re\;(-p)^{\nu_q(n)/2)} = \begin{cases} 0, & \nu_q(n)\equiv 1\pmod{2},\\
\pm 2p^{\nu_q(n)/2}, & \nu_q(n)\equiv 0\pmod{2}.
\end{cases}
\]
By multiplicativity we conclude that $q|a_n$. Since $q|n$, we obtain that $n-a_n+1$ is not divisible by $q$. But since $q^2|n$ we have that $q|\exp(E(\Z/q^{\nu_q(n)}\Z))$, hence, from Lemma~\ref{Lem:Carmichael crit} we see that $n$ is not an elliptic Carmichael number for $E$.
\end{proof}

Next we consider the case that $n$ is squarefree.

\begin{Lem}
\label{Lem:01}
Let $n$ be a squarefree integer which has two different prime factors $p, q$, and let $E$ be an elliptic curve which satisfies $a_p(E)=1$, $a_q(E)=0$. Then $n$ is not elliptic Carmichael for the curve $E$.
\end{Lem}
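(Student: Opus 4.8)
The plan is to verify directly that the divisibility condition of the Carmichael criterion (Lemma~\ref{Lem:Carmichael crit}) fails at the prime $p$. First I would dispose of the case in which $E$ has bad reduction somewhere modulo $n$: then $n$ fails to be $E$-Carmichael by definition, and there is nothing to prove. So assume $E$ has good reduction modulo $n$. Since $n$ is squarefree, $\nu_\ell(n)=1$ for every prime $\ell\mid n$, and the criterion requires that $\exp(E(\Z/\ell\Z))$ divide $n-a_n+1$ for each such $\ell$. It therefore suffices to exhibit a single prime divisor at which this divisibility fails.

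The key observation concerns $p$. Because $a_p(E)=1$, the local group has order $|E(\Z/p\Z)|=p-a_p(E)+1=p$, which is prime; hence $E(\Z/p\Z)$ is cyclic of order $p$ and $\exp(E(\Z/p\Z))=p$. Thus the criterion at $p$ reduces to the single requirement $p\mid n-a_n+1$. To refute it I would compute $a_n$ modulo $p$. By Lemma~\ref{Lem:L coeffs} the function $m\mapsto a_m$ is multiplicative, and since $n$ is squarefree we may write $a_n=\prod_{\ell\mid n}a_\ell(E)$. The factor indexed by $q$ is $a_q(E)=0$, so the whole product vanishes and $a_n=0$. Consequently $n-a_n+1=n+1$, and since $p\mid n$ we obtain $n-a_n+1\equiv 1\pmod{p}$, so that $p\nmid n-a_n+1$. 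This contradicts the divisibility demanded at $p$, and hence $n$ is not elliptic Carmichael for $E$.

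There is no genuine obstacle here; the argument turns entirely on the two prescribed values. The choice $a_p=1$ is what forces the local group at $p$ to have prime order, pinning its exponent down to exactly $p$, while the choice $a_q=0$ annihilates the global coefficient $a_n$ through multiplicativity. The only points demanding a moment's care are the preliminary reduction to the good-reduction case and the use of squarefreeness, which is precisely what permits writing $a_n$ as a product over the distinct prime divisors of $n$; both are routine.
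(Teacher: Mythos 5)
Your proof is correct and follows essentially the same route as the paper: multiplicativity together with $a_q=0$ forces $a_n=0$, while $a_p=1$ forces $\exp(E(\Z/p\Z))=p$, so the Carmichael criterion would require $p\mid n+1$, contradicting $p\mid n$. The only difference is that you spell out explicitly why the exponent at $p$ equals $p$ (the group has prime order) and handle the bad-reduction case, both of which the paper leaves implicit.
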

\begin{proof}
From $a_q(E)=0$ and multiplicativity we obtain $a_n=0$. On the other hand $a_p=1$ implies that $\exp(E(\Z/p\Z))=p$, hence if $n$ is elliptic Carmichael for $E$, then $p$ divides both $n$ and $n-a_n+1=n+1$, which is impossible.
\end{proof}

Clearly the case $a_p=0$ is quite special. If $n$ is not the power of a single prime, we can give examples of elliptic curves for which $n$ is not $E$-Carmichael which are not supersingular for any prime divisor of $n$, however, for the prime power case we did not find such examples. Unless $n$ has a lot of very small prime divisors, the probability for the event $a_n=0$ is very small, which might leave the impression that being $E$-Carmichael is not a rare event. This impression was the main motivation for the remaining parts of Theorem~\ref{thm:main}.

\section{Proof of Theorem~\ref{thm:main}: the probability for $n$ fixed}

The proof of the second part of Theorem~\ref{thm:main} follows a bootstrap strategy. Our aim is to show that a potential counterexample has to be divisible by many small prime numbers to the first power. To do so we show that if $n$ is a counterexample, then $n$ is not divisible by large primes, and by only few medium sized primes, and that $\frac{n}{\gamma(n)}$ is small. We shall begin with rather weak results in this direction, and then use the results on large prime divisors to strengthen the results on $\gamma(n)$, and vice versa. 

\begin{Lem}
\label{Lem:gamma1}
Let $n$ be an integer satisfying $\gamma(n)\leq 2^{-\omega(n)}\sqrt{n}$. Then the probability that $n$ is an elliptic Carmichael number for a random curve $E$ is $\mathcal{O}(e^{-c\frac{\log^{1/3} n}{\log_2 n}})$ for some $c>0$.
\end{Lem}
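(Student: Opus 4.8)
The plan is to turn the Carmichael condition into an arithmetic constraint on $a_n$ and then bound the chance that a random $E$ meets it. By Lemma~\ref{Lem:Carmichael crit} combined with Lemma~\ref{Lem:Exponent}, if $n$ is $E$-Carmichael then for every prime $p\mid n$ the integer $\exp(E(\Z/p^{\nu_p(n)}\Z))=p^{\nu_p(n)-1}\exp(E(\Z/p\Z))$ divides $m:=n-a_n+1$. In particular $p^{\nu_p(n)-1}\mid m$, and as these prime powers are pairwise coprime, $n/\gamma(n)=\prod_p p^{\nu_p(n)-1}$ divides $m$. Since $n\equiv 0\pmod{n/\gamma(n)}$, this forces $a_n\equiv 1\pmod{n/\gamma(n)}$. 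The hypothesis $\gamma(n)\le 2^{-\omega(n)}\sqrt n$ is exactly $n/\gamma(n)\ge 2^{\omega(n)}\sqrt n\ge |a_n|$, the last inequality by Lemma~\ref{Lem:L coeffs}. Hence $a_n$ is pinned to one of the two values $a_n=1$ or $a_n=1-n/\gamma(n)$, the second of which can occur only when $|a_n|$ lies within $O(1)$ of its maximum $2^{\omega(n)}\sqrt n$, which already forces $\gamma(n)=2^{-\omega(n)}\sqrt n$ up to a negligible factor.

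In the main case $a_n=1$ I would use multiplicativity: $a_n=\prod_{p\mid n}a_{p^{\nu_p(n)}}=1$ is a product of integers, so $a_{p^{\nu_p(n)}}=\pm 1$ for every $p\mid n$. By Lemma~\ref{Lem:only k} each such equation confines $a_p$ to a set $S_p$ of at most $2\nu_p(n)$ values, and by Lemma~\ref{Lem:Baker} (together with the identical linear-forms estimate for $a_{p^k}=-1$) the equation is outright impossible unless $\nu_p(n)\ll \log p\,\log_2 p$. Feeding $S_p$ into Lemma~\ref{Lem:Lenstra}, the local probability is $\ll \nu_p(n)\,p^{-1/2}\log p\,\log_2^2 p\le p^{-1/2+o(1)}$; as the reductions of $E$ at distinct primes are independent I multiply these over $p\mid n$, so each prime contributes a saving of roughly $\tfrac12\log p$ to $-\log\Pr$.

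The delicate point, and the step I expect to be the main obstacle, is that this saving is useful only at primes that are not too small, whereas $\gamma(n)$ small a priori permits $n$ to be built from very few tiny primes carrying enormous exponents. Here Baker's bound rescues the argument: $\nu_p(n)\ll\log p\,\log_2 p$ gives $\log n=\sum_p\nu_p(n)\log p\ll \log_2 n\sum_{p\mid n}\log^2 p$, whence $\sum_{p\mid n}\log^2 p\gg \log n/\log_2 n$. Since $\sum_{p}\log^2 p\le \log P^+(n)\cdot\log\gamma(n)$, at least one of $\log P^+(n)$ and $\log\gamma(n)$ is $\gg\sqrt{\log n/\log_2 n}$. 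In the first case the single largest prime already supplies this saving (all other local factors being $\le 1$); in the second the primes $p\ge p_0$ together contribute $\tfrac12\big(\log\gamma(n)-O(1)\big)$. Either way the total saving is $\gg\sqrt{\log n/\log_2 n}$, which is comfortably more than the asserted $\log^{1/3}n/\log_2 n$; the weaker $\log^{1/3}$ form is what survives the crudest bookkeeping and is all that is needed downstream.

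Finally, for the exceptional value $a_n=1-n/\gamma(n)$ I would note that $\prod_p|a_{p^{\nu_p(n)}}|$ must lie within $1$ of $\prod_p 2\sqrt{p^{\nu_p(n)}}$, so every factor is within $O(1)$ of its maximum $2\sqrt{p^{\nu_p(n)}}$. Writing $a_p=2\sqrt p\cos\theta_p$ and $a_{p^{\nu_p(n)}}=2\sqrt{p^{\nu_p(n)}}\cos(\nu_p(n)\theta_p)$, the condition $|\cos(\nu_p(n)\theta_p)|\ge 1-O(2^{-\omega(n)}n^{-1/2})$ cuts out $\nu_p(n)+1$ arcs of total integer-content $O(1)$, so only $O(1)$ traces $a_p$ qualify and Lemma~\ref{Lem:Lenstra} applies once more. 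Moreover this branch forces $\gamma(n)=2^{-\omega(n)}\sqrt n$, hence $\log\gamma(n)\ge \tfrac14\log n$, producing a saving $\gg\log n$. Combining the two cases yields the stated bound.
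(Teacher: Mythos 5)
Your proposal follows the paper's argument in all essentials: the same reduction via $\tfrac{n}{\gamma(n)}\mid\exp(E(\Z/n\Z))\mid n-a_n+1$ and the trivial bound $|a_n|\le 2^{\omega(n)}\sqrt{n}$ to force $a_n=1$, then multiplicativity giving $a_{p^{\nu_p(n)}}=\pm1$ at every prime, Lemma~\ref{Lem:Baker} to bound the exponents, and Lemma~\ref{Lem:Lenstra} combined with Lemma~\ref{Lem:only k} plus independence across primes to multiply the local probabilities. Where you diverge is only in the final bookkeeping: the paper splits according to whether $n$ has a prime divisor exceeding $e^{\log^{1/3}n}$ (one large local saving) or not (then $n$ has $\gg\log^{1/3}n/\log_2 n$ distinct prime divisors, each contributing a factor $\le 1/2$), which is what produces the exponent $\log^{1/3}n/\log_2 n$; you instead derive $\sum_{p\mid n}\log^2 p\gg\log n/\log_2 n$ from Baker's bound and split according to which of $\log P^+(n)$ and $\log\gamma(n)$ is $\gg\sqrt{\log n/\log_2 n}$, accumulating a saving proportional to $\log\gamma(n)$ over the primes above a fixed constant. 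Your version is sound (the per-prime saving $\tfrac12\log p-O(\log_2 p)$ is positive and $\ge\tfrac14\log p$ once $p$ exceeds an absolute constant, and the primes below that constant contribute only $O(1)$ to $\log\gamma(n)$) and in fact yields the stronger bound $e^{-c\sqrt{\log n/\log_2 n}}$. You also explicitly dispose of the boundary value $a_n=1-n/\gamma(n)$, which the divisibility argument does not quite exclude and which the paper passes over in silence; your observation that this branch forces $\log\gamma(n)\ge\tfrac14\log n$ makes it harmless. Both refinements are correct and compatible with everything the lemma is used for downstream.
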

\begin{proof}
If $E$ is a curve, for which $n$ is an elliptic Carmichael number, then we have that $\exp(E(\Z/n\Z))$ divides $n-a_n+1$. From Lemma~\ref{Lem:Exponent} we see that $\frac{n}{\gamma(n)}$ divides $\exp(E(\Z/n\Z))$, hence $\frac{n}{\gamma(n)}|n-a_n+1$. Clearly $\frac{n}{\gamma(n)}|n$, and we have $|a_n|\leq 2^{\omega(n)}\sqrt{n}$, thus our assumptions imply $a_n=1$. By multiplicativity this implies $a_{p^{\nu_p(n)}}=\pm 1$ for all prime divisors $p$ of $n$. 

From Lemma~\ref{Lem:Baker} we find that $\nu_p(n)\ll \log p\log_2 p$ for all prime divisors $p$ of $n$. From Lemma~\ref{Lem:Lenstra} and \ref{Lem:only k} we see that the probability that $a_{p^{\nu_p(n)}}=\pm 1$ is $\ll \frac{\nu_p(n)\log p\log_2^2 p}{\sqrt{p}}\ll \frac{\log^2 p\log_2^3 p}{\sqrt{p}}$. Suppose that $n$ has a prime divisor $p_0>e^{\sqrt[3]{\log n}}$. Then the probability for $a_{p_0^{\nu_{p_0}(n)}}=\pm 1$ is $\ll e^{-\sqrt[3]{\log n}/2}\log n$, which is sufficiently small. If $n$ has no prime divisor $\geq e^{\sqrt[3]{\log n}}$, then we have for each prime divisor $p$ of $n$ that
\[
p^{\nu_p(n)} \leq e^{\mathcal{O}(\log^2  p\log_2 p)}\leq e^{\mathcal{O}(\log^{2/3} n\log_2 n)} ,
\]
thus $n$ has $\gg\frac{\log^{1/3} n}{\log_2 n}$ different prime divisors. For each of them with a bounded number of exceptions we have that the probability for the event $a_{p^{\nu_p(n)}}=\pm 1$ is $\leq 1/2$, hence, the probability for the event that $a_{p^{\nu_p(n)}}=\pm 1$ for all prime divisors of $n$ is $<e^{-c\frac{\log^{1/3} n}{\log_2 n}}$, and our claim follows.
\end{proof}
\begin{Lem}
\label{Lem:gammabound2}
There exists some $c>0$, such that for every $\epsilon>0$ there exists some $n_0$ such that all integers $n>n_0$ with $\gamma(n)<\frac{n}{\log^4 n}$ we have that the probability that $n$ is an elliptic Carmichael number for random curve $E$ is $\mathcal{O}(\log^{-1} n)$.
\end{Lem}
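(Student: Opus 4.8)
\emph{Reduction.} The plan is to mimic the proof of Lemma~\ref{Lem:gamma1} up to the place where the size of $\gamma(n)$ is used, and then to keep only a congruence. Put $m=\frac{n}{\gamma(n)}$, so the hypothesis reads $m>\log^4 n$. If $n$ is elliptic Carmichael for $E$, then $\exp(E(\Z/n\Z))\mid n-a_n+1$; by Lemma~\ref{Lem:Exponent} we have $p^{\nu_p(n)-1}\mid\exp(E(\Z/p^{\nu_p(n)}\Z))$ for each $p\mid n$, so $m\mid\exp(E(\Z/n\Z))\mid n-a_n+1$, and since $m\mid n$ this yields
\[
a_n\equiv 1\pmod m .
\]
In Lemma~\ref{Lem:gamma1} the bound $m>2^{\omega(n)}\sqrt n\ge|a_n|$ forced $a_n=1$; here $m$ may be far smaller, so I keep the congruence and estimate its probability.

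\emph{A large prime factor.} Suppose first that $q:=P^+(n)\ge\log^{4+\epsilon}n$. The reduction of a random curve modulo $n$ factors, by the Chinese remainder theorem, into independent reductions modulo the prime powers $p^{\nu_p(n)}$. Condition on all of these except the one at $q$; this fixes $b:=\prod_{p\mid n,\,p\neq q}a_{p^{\nu_p(n)}}$. If some prime dividing $m$ divides $b$ the congruence is unsolvable, so I may assume $b$ is a unit modulo $m$, and then it becomes $a_{q^{\nu_q(n)}}\equiv b^{-1}\pmod m$, a single class modulo the full $m$. By Lemma~\ref{Lem:L coeffs} we have $|a_{q^{\nu_q(n)}}|\le 2q^{\nu_q(n)/2}$, so at most $\frac{4q^{\nu_q(n)/2}}{m}+1$ integers lie in that class, and by Lemma~\ref{Lem:only k} each arises from at most $\nu_q(n)$ values of $a_q$. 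Lemma~\ref{Lem:Lenstra} then bounds the probability by
\[
\ll \nu_q(n)\log q\,\log_2^2 q\left(\frac{q^{(\nu_q(n)-1)/2}}{m}+\frac{1}{\sqrt q}\right).
\]
Since $q^{\nu_q(n)-1}\mid m$ we have $q^{(\nu_q(n)-1)/2}\le\sqrt m$, and $\nu_q(n)\log q\le\log n$; together with $m>\log^4 n$ and $q\ge\log^{4+\epsilon}n$ this is $\mathcal{O}(\log^{-1}n)$ once the factors $\log_2$ are absorbed into $\log^\epsilon$. This disposes of every $n$ with $P^+(n)\ge\log^{4+\epsilon}n$.

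\emph{The smooth case.} There remain the $\log^{4+\epsilon}n$-smooth integers, which satisfy $\omega(n)\gg\frac{\log n}{\log_2 n}$. Here I would not use a single prime but exploit the abundance of prime factors, splitting according to whether $a_n=1$. If $a_n=1$, then since $a_n=\prod_{p\mid n}a_{p^{\nu_p(n)}}$ is a product of integers we must have $a_{p^{\nu_p(n)}}=\pm1$ for every $p\mid n$; by Lemmas~\ref{Lem:only k} and~\ref{Lem:Lenstra} this has probability $\ll\frac{\nu_p(n)\log p\,\log_2^2 p}{\sqrt p}$ for each $p$, which is below $\tfrac12$ for all but $\mathcal{O}(\log^{2/3+\epsilon}n)$ of the primes (a prime violating this must have $\nu_p(n)\log p\gg\sqrt p/\log p$, hence contributes that much to $\log n=\sum_p\nu_p(n)\log p$, which bounds their number). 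As the reductions at distinct primes are independent, I expect $\Pr[a_n=1]\ll 2^{-c\log n/\log_2 n}$, which is negligible.

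\emph{The obstacle.} The case $a_n\equiv1\pmod m$ with $a_n\neq1$ is the heart of the matter. Then $|a_n|\ge m>\log^4 n$, so $\prod_p|a_{p^{\nu_p(n)}}|$ is atypically large, but the one product congruence still constrains only a single factor once the others are fixed, and for a $\log^{4+\epsilon}n$-smooth $n$ no individual prime is large enough to make the estimate of the second paragraph reach $\log^{-1}n$. Worse, the residue class into which a given factor is forced depends on the reductions at all the other primes, so the events do not decouple into a product of independent local conditions. Resolving this---presumably by using the full divisibility $\exp(E(\Z/p\Z))\mid n-a_n+1$ at many primes simultaneously, rather than only its reduction modulo $m$, to recover genuinely independent constraints that can be multiplied---is the delicate point; every other step follows directly from Lemmas~\ref{Lem:L coeffs}, \ref{Lem:Exponent}, \ref{Lem:only k}, and~\ref{Lem:Lenstra}.
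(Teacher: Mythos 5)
Your reduction to $a_n\equiv 1\pmod{m}$ with $m=n/\gamma(n)$, your treatment of the case $P^+(n)\ge\log^{4+\epsilon}n$ by conditioning on the other local factors, and your estimate for $\Pr[a_n=1]$ are all sound and consistent with the paper's methods. But the proposal stops exactly where you say it does: the case of $\log^{4+\epsilon}n$-smooth $n$ with $a_n\equiv 1\pmod m$ and $a_n\neq 1$ is left open, and this is not a technicality but most of the content of the lemma. Two remarks on the obstacle as you describe it. First, the decoupling worry is not the real problem: conditioning sequentially on the local data at all primes but one confines each $a_{p^{\nu_p(n)}}$ in turn to a single residue class modulo $m$, hence to $O(1+\sqrt{p^{\nu_p(n)}}/m)$ values, and the chain rule converts this into a product of worst-case local probabilities; this is precisely the (tersely stated) independence claim in the paper's proof. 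Second, and more importantly, the missing idea is that one should not attack the smooth case with the single global congruence modulo $m$ at all.

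The paper's argument is a case analysis whose terminal case is Lemma~\ref{Lem:gamma1}, not a direct estimate. (i) If the product of the primes exactly dividing $n$ exceeds $\log^2 n\log_2^4 n$, the per-prime restriction of $a_p$ to $O(1+\sqrt p/m)$ values already wins. (ii) If some prime $p\ge 5$ satisfies $p^2\mid n$ and $p>\log^4 n\log_2^4 n$, then $m\ge p^{\nu_p(n)-1}\ge p^{\nu_p(n)/2}\ge\tfrac12|a_{p^{\nu_p(n)}}|$, so the congruence modulo $m$ determines $a_{p^{\nu_p(n)}}$ \emph{absolutely} in $O(1)$ ways rather than merely in a residue class; this is the device you lack for repeated prime factors. (iii) If $n$ has three primes $p_i>\tfrac1{10}\log n$ with $2\le\nu_{p_i}(n)\le 4$, one groups them into $Q=\prod p_i^{\nu_{p_i}(n)}$, so that $Q/\gamma(Q)\gg\sqrt Q$ pins down $a_Q$ in $O(1)$ ways, and a divisor-function bound $\tau_3(a_Q)\ll e^{O(\log_2 n/\log_3 n)}$ controls the number of admissible triples $(a_{p_1},a_{p_2},a_{p_3})$. (iv) If none of (i)--(iii) applies, then $\gamma(n)\le\log^{O(1)}n\cdot\prod_{p<\frac1{10}\log n}p\cdot\prod_{p^5\mid n}p\ll n^{3/10}\log^{O(1)}n\le 2^{-\omega(n)}\sqrt n$, and Lemma~\ref{Lem:gamma1} finishes the proof. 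Your smooth case is absorbed into (i), (iii) and (iv); without the absolute determination of the local coefficients in (ii)--(iii) and the final reduction to Lemma~\ref{Lem:gamma1}, the argument cannot be closed along the lines you propose.
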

\begin{proof}
Assume first that $n$ is divisible by a prime number $p$, such that $p^2\nmid n$.
If $n$ is an elliptic Carmichael number for $E$, then $\exp(E(\Z/n\Z))$ is divisible by $n/\gamma(n)$, hence we have $a_n\equiv 1\pmod{n/\gamma(n)}$. By multiplicativity this implies $a_p\equiv\pm 1\pmod{n/\gamma(n)}$, that is, $a_p$ takes on only $\mathcal{O}(1+\frac{\sqrt{p}}{n/\gamma(n)})$ values. We conclude that the probability that $n$ is elliptic Carmichael for a random curve is bounded above by
\[
\mathcal{O}(\frac{\log p(\log\log p)^2}{n/\gamma(n)}+\frac{\log p(\log\log p)^2}{\sqrt{p}}) = \mathcal{O}(\frac{\log^2 p}{\sqrt{p}}).
\]
Clearly the conditions $a_p\equiv \pm 1\pmod{\frac{n}{\gamma(n)}}$ are independent for different primes $p$, hence we conclude that if the product of all primes $p|n$, such that $p^2\nmid n$ supersedes $\log^2 n\log_2^4 n$, then the probability for $n$ to be $E$-Carmichael is $\ll\frac{1}{\log n}$.

Next suppose that $p\ge 5$ is a prime, such that $p^2|n$. We have $\frac{n}{\gamma(n)}|\exp(E(\Z/n\Z))$, hence, if $n$ is elliptic Carmichael for the curve $E$, then
\[
\left.\frac{n}{\gamma(n)}\right|n-a_{n/p^{\nu_p(n)}}a_{p^{\nu_p(n)}}+1.
\]
Since $\frac{n}{\gamma(n)}|n$, and $\frac{n}{\gamma(n)}\geq p^{\nu_p(n)-1}\geq p^{\nu_p(n)/2}$, we conclude that $a_{p^{\nu_p(n)}}$ is determined in $\mathcal{O}(1)$ ways. This in turn implies that $a_p$ is determined in $\mathcal{O}(\nu_p(n))$ ways, and we find that the probability that $n$ is $E$-Carmichael for a random curve is bounded above by
\[
\frac{\nu_p(n) \log p\log_2 p}{\sqrt{p}} \ll \frac{\log n \log p\log_2 p}{\sqrt{n}}.
\]
Hence if $p>\log^4 n\log^4_2 n$, our claim follows as well.

In particular we find that if $n$ has a prime divisor $p>C\log^4 n\log_2^4 n$, then $n$ is $E$-Carmichael with probability $\mathcal{O}(\log^{-1} n)$, no matter whether $p^2|n$ or not. 

Next suppose that $n$ is divisible by 3 prime numbers $p_1, p_2, p_3$, such that $p_i>\frac{1}{10}\log n$, and $2\leq \nu_{p_i}(n)\leq 4$. Put $Q=\prod_{i=1}^3 p_i^{\nu_{p_i}(n)}$. Then we have that $a_Q$ is uniquely determined modulo $\frac{Q}{\gamma(Q)}$, and since $16\sqrt{Q}\ll\frac{Q}{\gamma(Q)}$, we have that $a_Q$ is determined in $\mathcal{O}(1)$ ways. If $a_Q$ is fixed, then $a_{p_1^{\nu_{p_1}(n)}}, a_{p_2^{\nu_{p_2}(n)}}, a_{p_3^{\nu_{p_3}(n)}}$ are three integers  with product $a_Q$, thus the number of choices for this triple is bounded above by
\[
\max_{m\leq Q}\tau_3(m)\ll e^{\mathcal{O}(\frac{\log Q}{\log_2Q})} \ll e^{\mathcal{O}(\frac{\log_2 n}{\log_3 n})}.
\]
We conclude that the number of choices for the triple $(a_{p_1}, a_{p_2}, a_{p_3})$ is $\ll\log^\epsilon n$. Each fixed triple can be reached with probability $\ll\frac{\log^\epsilon n}{\sqrt{P}}$. Hence the probability that $n$ is $E$-Carmichael is bounded above by
\[
\frac{\log^\epsilon n}{P} \ll \frac{1}{\log^{3/2-\epsilon} n},
\]
and we are done.

We now bound $\gamma(n)$. We have
\begin{eqnarray*}
\gamma(n) & = & \prod_{p|n} p\\
 & = & \underset{p^2\nmid n}{\prod_{p|n}} p  \underset{2\leq \nu_p(n)\leq 4}{\prod_{p|n}} p  \underset{\nu_p(n)\geq 5}{\prod_{p|n}} p\\
 & \leq & \log^{18} n \prod_{p<\frac{1}{10}\log n} p \prod_{p^5|n} p\\
 & \ll & n^{3/10}\log^{18}n,
\end{eqnarray*}
and we are in the range already covered by Lemma~\ref{Lem:gamma1}.
\end{proof}

\begin{Lem}
\label{Lem:largep}
Suppose that $n$ is divisible by a prime number $p>n^{0.7}$. Then the probability that $n$ is elliptic Carmichael for a random elliptic curve is $\mathcal{O}(n^{-0.05})$.
\end{Lem}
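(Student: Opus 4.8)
The plan is to use the hypothesis $p>n^{0.7}$ to confine the order $|E(\Z/p\Z)|$ to a very small set, and then apply Lenstra's estimate, Lemma~\ref{Lem:Lenstra}. First I would note that $p^2>n^{1.4}>n$, so $p\,\|\,n$; write $n=pm$ with $2\le m<n^{0.3}$ (here $m\ge 2$ precisely because $n$ is not a prime power), and, by multiplicativity and $(p,m)=1$, $a_n=a_p a_m$. If $n$ is $E$-Carmichael, then Lemma~\ref{Lem:Carmichael crit} gives that $d:=\exp(E(\Z/p\Z))$ divides $n-a_n+1$, while $d$ divides $|E(\Z/p\Z)|=p+1-a_p$ automatically. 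Subtracting $a_m$ times the second divisibility from the first eliminates the curve-dependent quantity $a_p$:
\[
d \;\Big|\; (n-a_n+1)-a_m(p+1-a_p) = n+1-(p+1)a_m =: N .
\]

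Next I would establish that, once the reduction of $E$ modulo $m$ is fixed, $N$ is a fixed nonzero integer of polynomial size. Indeed $N$ depends on $E$ only through $a_m$, hence only through $E\bmod m$, which is independent of $E\bmod p$. If $N=0$ then $(p+1)\mid n+1=pm+1$, whence $(p+1)\mid m-1$; but $0<m-1<p$, so this is impossible and $N\neq0$. Since $|a_m|\le 2^{\omega(n)}\sqrt m=n^{o(1)}\sqrt m$ and $m<n^{0.3}$, while $p<n$, we get $|N|\ll n^{1.15+o(1)}$, so the divisor bound yields $\tau(N)=n^{o(1)}$.

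Then I would turn the divisibility $d\mid N$ into a constraint on the order. By Lemma~\ref{Lem:Exponent} the group $E(\Z/p\Z)\cong\Z/d_1\Z\oplus\Z/d_2\Z$ is two-generated with $d_1\mid d_2=d$, so $d\ge\sqrt{|E(\Z/p\Z)|}\ge\sqrt{p-2\sqrt p}\gg\sqrt p$. Thus if $n$ is $E$-Carmichael then $|E(\Z/p\Z)|$ is a multiple of some divisor $d\mid N$ with $d\gg\sqrt p$, lying in the Hasse interval $[p-2\sqrt p,p+2\sqrt p]$ of length $4\sqrt p$. Each admissible $d$ contributes at most $4\sqrt p/d+1=\mathcal{O}(1)$ multiples, so the set $S$ of admissible orders satisfies $|S|\ll\tau(N)=n^{o(1)}$.

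Finally I would condition on $E\bmod m$ (which fixes $a_m$, hence $N$ and $S$) and apply Lemma~\ref{Lem:Lenstra} to the independent reduction $E\bmod p$: the conditional probability that $|E(\Z/p\Z)|\in S$ is $\ll \frac{|S|}{\sqrt p}\log p\log_2^2 p\ll n^{o(1)}p^{-1/2}$. Since this bound is uniform in the class of $E\bmod m$, averaging preserves it, and $p>n^{0.7}$ gives the total bound $\ll n^{-0.35+o(1)}\ll n^{-0.05}$. The main obstacle is the second step: the cancellation producing a curve-independent target $N$, together with the verification $N\neq0$ (which is where the hypothesis $m\ge 2$ is used). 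Once $N$ is pinned down as a fixed nonzero integer, the large-exponent fact and the divisor bound combine routinely with Lenstra's lemma, and in fact give far more than the claimed $n^{-0.05}$.
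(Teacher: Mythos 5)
Your argument is correct, and it reaches the conclusion by a genuinely different route from the paper. The paper also starts from the divisibility $e(p-a_p+1)\mid n-a_n+1$ (with $e$ as in Lemma~\ref{Lem:exponent 2generated}, essentially your $\exp(E(\Z/p\Z))\geq\sqrt{|E(\Z/p\Z)|}$), but it subtracts $d(p-a_p+1)$ with $d=n/p$, leaving the remainder $(d-a_d)a_p-d+1$, which still contains $a_p$ but has absolute value at most $3n^{0.65}$. The paper then concludes that $e(p-a_p+1)<(p-a_p+1)\,n^{-0.05}$, invokes Lemma~\ref{Lem:ABC} to count the integers $u$ in the Hasse interval with $e(u)$ that small, and applies Lemma~\ref{Lem:Lenstra}; the vanishing remainder is handled as a separate case in which $a_p=(d-1)/(d-a_d)$ is pinned down. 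Your choice of multiplier $a_m$ instead of $m=n/p$ is sharper: it eliminates $a_p$ entirely, produces a target $N=n+1-(p+1)a_m$ that is a fixed nonzero integer once $E\bmod m$ is conditioned on (your congruence argument modulo $p+1$ disposes of $N=0$ cleanly, where the paper needs a separate probabilistic estimate), and replaces the appeal to Lemma~\ref{Lem:ABC} by the elementary divisor bound $\tau(N)=n^{o(1)}$ combined with the observation that each large divisor of $N$ has $\mathcal{O}(1)$ multiples in the Hasse interval. What each approach buys: the paper's remainder is small enough only to force $e(p-a_p+1)\leq 3n^{0.65}$, which caps its exponent at $0.05$; your version yields $\mathcal{O}(n^{-0.35+o(1)})$, strictly stronger than the stated bound, at the cost of having to condition on $E\bmod m$ and use the independence of the reductions modulo $p$ and $m$ (which the paper also uses implicitly elsewhere). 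One small point worth making explicit if you wrote this up: the uniformity of the bound $|S|\ll\tau(N)\leq\max_{0<|N|\ll n^{1.2}}\tau(N)$ over all conditioning classes is what lets you average over $E\bmod m$ at the end.
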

\begin{proof}
Write $n=dp$. Then we have $e(p-a_p+1)|n-a_n+1$, that is, $e(p-a_p+1)|dp-a_da_p+1$. Since $e(p-a_p+1)$ divides $p-a_p+1$, we obtain
\[
e(p-a_p+1)| n-a_n+1-d(p-a_p+1) = (d-a_d)a_p -d+1\leq 2(d+2\sqrt{d})\sqrt{p}\leq \frac{3n}{\sqrt{p}} \leq 3n^{0.65}.
\]
If $n-a_n+1-d(p-a_p+1)\neq 0$, we obtain that $e(p-a_p+1)<pn^{-0.05}$, which happens with probability $\ll n^{-0.05}$ in view of Lemma~\ref{Lem:ABC}. If this quantity vanishes, then the divisibility property becomes trivial. But then we have $a_p=\frac{d-1}{d-a_d}$, which has at most one solution. Hence the probability for the event that this quantity vanishes is $\mathcal{O}(\frac{\log p\log_2 p}{\sqrt{p}})$, which is even smaller.
\end{proof}
The previous lemma will not be used directly, but together with Lemma~\ref{Lem:gammabound2} we obtain the following, which shall be used repeatedly.
\begin{Lem}
\label{Lem:2 not small p}
Suppose that $n$ has not two prime divisors $p_1, p_2$, which satisfy $p_i>0.1\log n$, and $p_i^2\nmid n$. Then the probability that $n$ is elliptic Carmichael for a random curve $E$ is $\mathcal{O}(\log^{-1} n)$.
\end{Lem}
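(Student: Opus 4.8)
The plan is to obtain this lemma as a formal consequence of Lemma~\ref{Lem:gammabound2} and Lemma~\ref{Lem:largep}, the bridge being an upper bound for the squarefree kernel $\gamma(n)$. The hypothesis is precisely that there is \emph{at most one} prime $p_0$ with $p_0>0.1\log n$ and $p_0^2\nmid n$. First I would partition the primes dividing $n$ into three classes: the small primes $p\le 0.1\log n$; the large squarefull primes, i.e.\ those $p>0.1\log n$ with $p^2\mid n$; and the at most one large first-power prime $p_0$ permitted by the hypothesis (set $p_0=1$ if none exists).

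The key estimates are as follows. By the prime number theorem $\sum_{p\le 0.1\log n}\log p=(1+o(1))\,0.1\log n$, so the small primes contribute at most $\prod_{p\le 0.1\log n}p=n^{0.1+o(1)}$ to $\gamma(n)$, regardless of the exponents to which they occur. If $m'$ denotes the part of $n$ supported on the large squarefull primes, then every such prime divides $n$ to a power $\ge 2$, so their contribution to $\gamma$ is $\prod q\le\sqrt{m'}\le\sqrt{n/p_0}$. Collecting these, and writing $m=n/p_0$, I would obtain
\[
\gamma(n)\;\le\; p_0\cdot n^{0.1+o(1)}\cdot\sqrt{m}\;=\;\sqrt{p_0}\,\sqrt{n}\,n^{0.1+o(1)}.
\]

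The proof then splits on the size of $p_0$. If $p_0\le n^{0.7}$ (which includes the case $p_0=1$), the displayed bound gives $\gamma(n)\le n^{0.35}\cdot n^{0.5}\cdot n^{0.1+o(1)}=n^{0.95+o(1)}<n/\log^4 n$ for $n$ large, so Lemma~\ref{Lem:gammabound2} yields that $n$ is $E$-Carmichael with probability $\mathcal{O}(\log^{-1}n)$. If instead $p_0>n^{0.7}$, then $n$ has a prime factor exceeding $n^{0.7}$ and Lemma~\ref{Lem:largep} gives probability $\mathcal{O}(n^{-0.05})$, which is also $\mathcal{O}(\log^{-1}n)$. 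These two cases are exhaustive: any prime $q$ with $q^2\mid n$ satisfies $q\le\sqrt{n}$, so the only prime factor of $n$ that can exceed $n^{0.7}$ is $p_0$ itself.

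There is no serious obstacle here—the argument is essentially bookkeeping—so the only point needing care is that the two regimes actually overlap. Lemma~\ref{Lem:gammabound2} applies as long as $\sqrt{p_0}\,n^{0.6}<n^{1-o(1)}$, i.e.\ $p_0<n^{0.8-o(1)}$, whereas Lemma~\ref{Lem:largep} covers $p_0>n^{0.7}$; since $0.7<0.8$ the two ranges overlap and leave no gap in $p_0$. I would double-check that the constant $0.1$ in the threshold $0.1\log n$ keeps the small-prime contribution comfortably below $n^{0.2}$ so that the final exponent stays below $1$, which it does with room to spare.
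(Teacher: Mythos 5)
Your proof is correct and takes essentially the same route as the paper: both deduce the statement from Lemma~\ref{Lem:gammabound2} and Lemma~\ref{Lem:largep} by bookkeeping the prime factorization of $n$ (small primes bounded by $n^{0.1+o(1)}$ via the prime number theorem, squarefull primes absorbed with a square-root saving, and the single permitted large first-power prime handled by the case split at $n^{0.7}$). The paper merely phrases the argument contrapositively, assuming $P^+(n)\le n^{0.7}$ and $n/\gamma(n)\le\log^4 n$ and deriving the contradiction $n<n^{0.9}$, whereas you bound $\gamma(n)$ directly; the content is the same.
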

\begin{proof}
Let $n$ be an integer. If $P^+(n)>n^{0.7}$ or $\frac{n}{\gamma(n)}>\log^4 n$, our claim follows from Lemma~\ref{Lem:largep} or Lemma~\ref{Lem:gammabound2}, respectively. Hence assume that $n$ satisfies none of these statements. We claim that for $n$ sufficiently large this already implies that $n$ has two prime divisors as described in the lemma. To see this assume the contrary. Let $p_1, p_2$ be the two largest prime divisors of $n$ such that $p_i^2\nmid n$. We want to show that these divisors exist and satisfy $p_i>0.1\log n$. Suppose the contrary. Then we have
\[
n\leq \prod_{p<0.1\log n} p \cdot P^+(n) \cdot \left(\frac{n}{\gamma(n)}\right)^2 \leq e^{(0.1+o(1))\log n} n^{0.7} \log^8 n < n^{0.9},
\]
which gives a contradiction.
\end{proof}

\begin{Lem}
\label{Lem:p1p2}
Let $n$ be an integer, $p_1<p_2$ be prime divisors of $n$ such that $p_i^2\nmid n$. Write $n=dp_1p_2$. Let $E$ an elliptic curve such that $n$ is  elliptic Carmichael for $E$. then one of the following holds true:
\begin{enumerate}
\item $a_{p_1}$ is uniquely determined by $a_d, a_{p_2};$
\item $e(p_2-a_{p_2}+1) < 4\sqrt{p_1}(p_2/p_1)^{1/6};$
\item Putting $t=(e(p_2-a_{p_2}+1), n+1)$ we have $t|a_da_{p_2}$ and $t>p_2^{1/3}$.
\end{enumerate}
\end{Lem}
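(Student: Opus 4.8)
The plan is to extract from the Carmichael condition at the single prime $p_2$ one linear congruence that $a_{p_1}$ must satisfy, and then to obtain the three alternatives according to how strongly this congruence constrains $a_{p_1}$. First I would apply Lemma~\ref{Lem:Carmichael crit} at $p_2$: since $p_2^2\nmid n$ we have $\nu_{p_2}(n)=1$, so $\exp(E(\Z/p_2\Z))$ divides $n-a_n+1$. As $E(\Z/p_2\Z)$ is two-generated of order $p_2-a_{p_2}+1$, Lemma~\ref{Lem:exponent 2generated} gives that $m:=e(p_2-a_{p_2}+1)$ divides $\exp(E(\Z/p_2\Z))$, hence $m\mid n+1-a_n$. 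Writing $a_n=a_d a_{p_1}a_{p_2}$ by multiplicativity (here $d$ is coprime to $p_1p_2$ because $p_1^2,p_2^2\nmid n$) this becomes
\[
a_d a_{p_2}\,a_{p_1}\equiv n+1\pmod m ,
\]
to be combined with the two size inputs $|a_{p_1}|\le 2\sqrt{p_1}$ (Lemma~\ref{Lem:L coeffs}) and $m\ge\sqrt{p_2-2\sqrt{p_2}+1}=\sqrt{p_2}-1$.

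Next I would solve the congruence. Put $g=\gcd(a_d a_{p_2},m)$. The congruence is solvable (the actual curve provides a solution), so $g\mid n+1$, and $a_{p_1}$ is then determined modulo $m/g$. Since $a_{p_1}$ lies in an interval of length $<4\sqrt{p_1}$, the condition $m/g\ge 4\sqrt{p_1}$ pins $a_{p_1}$ down uniquely, giving alternative (1). I may therefore assume $g>m/(4\sqrt{p_1})$, i.e.\ $g$ is a large divisor of $m$. If moreover $m$ is small, $m<4\sqrt{p_1}(p_2/p_1)^{1/6}$, then alternative (2) holds, so I may also assume $m$ exceeds this threshold.

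To produce alternative (3) from what survives, I would pass to $t=(m,n+1)$. From $g\mid a_d a_{p_2}$ and $g\mid n+1$ one gets $g\mid t$, and a prime-by-prime valuation comparison in the congruence shows that $r:=t/\gcd(t,a_d a_{p_2})$ divides $a_{p_1}$; in particular $t\mid a_d a_{p_2}$ exactly when $r=1$, while $r>1$ forces $1<r\le 2\sqrt{p_1}$. Thus the divisibility clause of (3) is equivalent to $r=1$, and the content of the lemma is that the failure of (1) and (2) forces both $r=1$ and $t>p_2^{1/3}$.

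The hard part is precisely this last reconciliation of exponents. The naive chain gives only $t\ge g>m/(4\sqrt{p_1})\ge (p_2/p_1)^{1/6}$ once (2) is excluded, which is far short of $p_2^{1/3}$; and it is not clear a priori why $r>1$ should be incompatible with (1) and (2) both failing. I expect that closing this gap is where the work lies: one must use the lower bound $e(p_2-a_{p_2}+1)\ge\sqrt{p_2}-1$ at full strength, weigh it against the Hasse range $\sqrt{p_1}$, and probably bring in extra structure beyond the single $p_2$-congruence — for instance the companion relation $t\mid p_2-a_{p_2}+1$ (since $t\mid m\mid p_2-a_{p_2}+1$), or the analogous Carmichael congruence $e(p_1-a_{p_1}+1)\mid n+1-a_n$ at $p_1$ — in order to show that a large $g$ genuinely forces a large common divisor $t$ of $m$ and $n+1$ lying inside $a_d a_{p_2}$. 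Calibrating the thresholds $4\sqrt{p_1}$, $4\sqrt{p_1}(p_2/p_1)^{1/6}$ and $p_2^{1/3}$ so that the three alternatives cover the whole parameter range with no gap is the main obstacle.
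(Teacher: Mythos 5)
Your setup coincides with the paper's: the Carmichael criterion at $p_2$ together with multiplicativity gives $m:=e(p_2-a_{p_2}+1)\mid n+1-a_da_{p_1}a_{p_2}$, and the $4\sqrt{p_1}$ threshold for pinning $a_{p_1}$ down inside the Hasse interval yields alternative (1). But your write-up stops short of a proof: you leave open both why $t\mid a_da_{p_2}$ and why $t>p_2^{1/3}$, and the route you sketch for the latter (negate (2) to get $m\geq 4\sqrt{p_1}(p_2/p_1)^{1/6}$, hence $t\geq g\geq m/(4\sqrt{p_1})\geq (p_2/p_1)^{1/6}$) cannot reach $p_2^{1/3}$, as you yourself observe. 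The idea you are missing is that the trichotomy is run multiplicatively on the exact factorization $m=q\cdot t$ with $q=m/t$. Since $g=(a_da_{p_2},m)$ divides both $m$ and $n+1$ (the latter by solvability of the congruence), $g\mid t$, so $q\mid m/g$ and $a_{p_1}$ is determined modulo $q$. Hence either $q>4\sqrt{p_1}$ and (1) holds, or $t>p_2^{1/3}$ and the quantitative half of (3) holds, or both factors are small, in which case $m=qt\leq 4\sqrt{p_1}\,p_2^{1/3}$, which is the content of case (2). In other words, (2) is not a hypothesis to be negated and converted into a lower bound for $t$; it is the residual case in which both $q$ and $t$ are small. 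With this organization the gap between $(p_2/p_1)^{1/6}$ and $p_2^{1/3}$ that blocks you never arises.

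That said, two of your worries are legitimate and are not really resolved by the paper either. First, the congruence only yields $t\mid a_da_{p_1}a_{p_2}$, and the paper asserts $t\mid a_da_{p_2}$ without removing the factor $a_{p_1}$; your finer analysis via $r=t/(t,a_da_{p_2})$ is the correct way to see what is actually established (only the inequality $t>p_2^{1/3}$ is used in the later applications, so this does not damage them). Second, the bound the multiplicative argument delivers in the residual case is $4p_1^{1/2}p_2^{1/3}$, which is larger than the stated $4\sqrt{p_1}(p_2/p_1)^{1/6}=4p_1^{1/3}p_2^{1/6}$, so the calibration issue you flag at the end is real. Nevertheless, since your proposal does not itself supply a complete argument covering alternatives (2) and (3), it cannot stand as a proof of the lemma.
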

\begin{proof}
If $n$ is elliptic Carmichael for $E$, then
\[
e(p_2-a_{p_2}+1)|n-a_n+1= n-a_da_{p_1}a_{p_2} + 1.
\]
Assume that $a_d$ and $a_{p_2}$ are given. Put $t=(e(p_2-a_{p_2}+1), n+1)$. Then $t|a_d a_{p_2}$, and $a_{p_1}$ is uniquely determined modulo $q=\frac{e(p_2-a_{p_2}+1)}{t}$. On the other hand we have $|a_{p_1}|\leq 2\sqrt{p}$, hence, if $q>4\sqrt{p_1}$, then we have that $a_{p_1}$ is uniquely determined in terms of $a_{d}, a_{p_2}$. If $q<4\sqrt{p_1}$, we have that $t>p_2^{1/3}$ or $e(p_2-a_{p_2}+1)<4\sqrt{p_1}(p_2/p_1)^{1/6}$. Hence in any case one of the statements (1)--(3) has to be true.
\end{proof}

\begin{Lem}
Suppose that $n$ has $L$ different prime divisors $\leq M^2$. Then $n$ is $E$-Carmichael with probability $\mathcal{O}(\log^{-1} n)$.
\end{Lem}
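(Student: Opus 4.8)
The plan is to reduce to the case where the relevant prime divisors are of medium size and squarefree, and then to exploit the structural trichotomy of Lemma~\ref{Lem:p1p2} by pairing these primes and summing out their traces one pair at a time. First I would dispose of the easy regimes: if $P^+(n)>n^{0.7}$ the claim is Lemma~\ref{Lem:largep}, and if $\gamma(n)<n/\log^4 n$ it is Lemma~\ref{Lem:gammabound2}, so I may assume $P^+(n)\le n^{0.7}$ and $n/\gamma(n)\le\log^4 n$. Among the prime divisors exceeding $0.1\log n$ the latter bound forces all but $\mathcal O(\log_2 n/\log_3 n)$ to occur to the first power, and the complementary case (few large first-power primes) is exactly Lemma~\ref{Lem:2 not small p}. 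Hence from the $L$ primes $\le M^2$ I may keep a subfamily $p_1<p_2<\dots$ with $p_i\|n$ and $p_i>0.1\log n$; I would also take $M$ small enough, say $\log M\ll\sqrt{\log n}$, and then pair the retained primes consecutively as $(p_1,p_2),(p_3,p_4),\dots$, writing $d_j=n/(p_{2j-1}p_{2j})$. The precise size of $L$ needed will emerge from the per-pair saving below.

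Next I would set up the probabilistic bookkeeping. Choosing $E$ at random amounts, by the Chinese remainder theorem and Deuring's count (Lemma~\ref{Lem:Deuring}), to choosing the reductions of $E$ modulo the distinct primes independently, so that writing $P_p(v)$ for the probability that $a_p(E)=v$ we have $P(\mathrm{Carm})=\sum_{(a_p)}\mathbb 1[\mathrm{Carm}]\prod_{p\mid n}P_p(a_p)$, with $\sum_v P_p(v)=1$ and, by Lemma~\ref{Lem:Lenstra}, $\max_v P_p(v)\ll \frac{\log p\,\log_2^2 p}{\sqrt p}$. For each pair I apply Lemma~\ref{Lem:p1p2}: if $n$ is Carmichael then for pair $j$ one of the alternatives (1), (2), (3) holds, where (2) and (3) constrain $a_{p_{2j}}$ and $a_{d_j}$ only, while (1) pins $a_{p_{2j-1}}$ down once $a_{d_j},a_{p_{2j}}$ are known. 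I would then sum out the pairs from the largest index downward: at pair $j$, with $a_{d_j}$ already a fixed number, the sum over $(a_{p_{2j-1}},a_{p_{2j}})$ of $P_{p_{2j-1}}P_{p_{2j}}$ restricted to (1)--(3) is at most $\theta_j:=\max_v P_{p_{2j-1}}(v)+\mathbb P[(2)_j]+\mathbb P[(3)_j]$, uniformly in the remaining variables; in the case (1) term one uses that $a_{p_{2j-1}}$ is forced and $\sum_{a_{p_{2j}}}P_{p_{2j}}\le1$. Telescoping over $j$ then yields the clean bound $P(\mathrm{Carm})\le\prod_j\theta_j$.

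It remains to show each $\theta_j$ is $o(1)$ with a power of $\log n$ to spare. The case (1) contribution is $\ll\frac{\log M\,\log_2^2 M}{\sqrt{0.1\log n}}\ll(\log n)^{-1/2+\epsilon}$. For alternative (2), the condition $e(p_{2j}-a_{p_{2j}}+1)<4\sqrt{p_{2j-1}}(p_{2j}/p_{2j-1})^{1/6}$ says that $m=p_{2j}-a_{p_{2j}}+1$, which ranges over an interval of length $\ll\sqrt{p_{2j}}$ about $p_{2j}$, satisfies $e(m)<m/k$ with $k\asymp p_{2j}^{5/6}/p_{2j-1}^{1/3}$; Lemma~\ref{Lem:ABC} bounds the number of such $m$ by $\ll p_{2j}^{1/3}\log^3 p_{2j}$, and Lemma~\ref{Lem:Lenstra large S} then gives $\mathbb P[(2)_j]\ll p_{2j}^{-1/6+\epsilon}\ll(\log n)^{-1/6+\epsilon}$. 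Granting a comparable bound for alternative (3), each $\theta_j\ll(\log n)^{-\beta}$ for some fixed $\beta>0$, whence $\prod_j\theta_j\ll(\log n)^{-\beta L/2}$; taking the number of retained pairs to exceed $2/\beta$ (which the hypothesis that $n$ has $L$ such primes secures once $L$ is a large enough absolute constant, with $L\gg\log_2 n$ giving ample room) forces $P(\mathrm{Carm})\ll\log^{-1}n$.

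The hard part will be alternative (3), which I expect to be the real obstacle. Here $t_j=\gcd(e(p_{2j}-a_{p_{2j}}+1),\,n+1)>p_{2j}^{1/3}$ and $t_j\mid a_{d_j}a_{p_{2j}}$; since $t_j\mid p_{2j}-a_{p_{2j}}+1$, the event forces $a_{p_{2j}}\equiv p_{2j}+1\pmod{t_j}$ for some large divisor $t_j$ of $n+1$. Counting the admissible $a_{p_{2j}}$ naively costs a factor equal to the number of divisors of $n+1$ exceeding $p_{2j}^{1/3}$, and the danger is that $\tau(n+1)$ can be as large as $n^{o(1)}$, which would swamp the $p_{2j}^{-1/3}$ saving coming from the length of the interval. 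The plan to defend against this is to use the extra constraint $t_j\mid a_{d_j}a_{p_{2j}}$ to confine $t_j$ to divisors of the fixed integer $\gcd(n+1,a_{d_j}a_{p_{2j}})$, and to count the integers $m\le 2p_{2j}$ with $\gcd(m,n+1)>p_{2j}^{1/3}$ directly, so that only $\log^\epsilon n$ residue classes actually contribute. Verifying that this divisor count stays within $\log^\epsilon n$ is the single step I would expect to require genuine care, and it is the natural place to spend the $\log^\epsilon$ slack the paper allows.
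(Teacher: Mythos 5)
There is a genuine gap here, and it is worth noting first that the paper's own proof of this lemma is itself only a one-sentence fragment (the statement, too, is missing the quantitative relation between $L$, $M$ and $n$ that the conclusion must depend on). Still, that fragment makes the intended mechanism clear, and it is entirely different from yours: the primes below $M^2$ are sorted into the intervals $[\nu^2,(\nu+1)^2)$, and if such an interval contains $a_\nu$ prime divisors of $n$, then for each of them the \emph{lower} bound of Lemma~\ref{Lem:Lenstra large S} shows that $p-a_p+1$ \emph{equals} one of the other $a_\nu-1$ prime divisors of $n$ in the same interval with probability $\gg(a_\nu-1)/(\nu\log\nu\log_2\nu)$, since all of those primes lie in the Hasse interval about $p$. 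Accumulating many such coincidences forces $a_n\equiv 1$ modulo a large product of prime divisors of $n$, which is how the deduction at the end of Section 5 proceeds. This is an anti-concentration argument that exploits the clustering of the $L$ primes below $M^2$; nothing of the sort appears in your proposal, which instead recycles the trichotomy of Lemma~\ref{Lem:p1p2} from Lemmas~\ref{Lem:many medium} and~\ref{Lem:some pretty large}.

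The concrete problems with your route are two. First, your reduction does not produce the subfamily you need: Lemma~\ref{Lem:2 not small p} guarantees only \emph{two} prime divisors $p>0.1\log n$ with $p^2\nmid n$ somewhere in $n$, not that any of the $L$ given primes $\leq M^2$ exceed $0.1\log n$, nor that more than one pair can be formed; you also quietly impose $\log M\ll\sqrt{\log n}$, although $M$ is given. Second, and fatally, the claim that each $\theta_j\ll(\log n)^{-\beta}$ fails for alternative (3) in exactly the range this lemma targets: the event $t=(e(p-a_p+1),n+1)>p^{1/3}$ is controlled via Lemma~\ref{Lem:short intervals not divide n+1} with $\mathcal{P}$ the prime divisors of $n+1$, and the resulting probability is of order $\log_2 n/\log p$ (up to $\log_2$ factors), which for $p$ of polylogarithmic size is a \emph{constant}, tending to $1$ as $p$ approaches $0.1\log n$. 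This is precisely why Lemma~\ref{Lem:many medium} needs both $p>\log^C n$ with $C$ large \emph{and} at least $\log_2 n$ such primes, so as to win the factor $e^{-k}$; a bounded number of pairs, which is all your bookkeeping secures, yields at best a bounded saving, not $\mathcal{O}(\log^{-1}n)$. Your closing paragraph correctly identifies alternative (3) as the obstacle, but the proposed repair is essentially a rederivation of Lemma~\ref{Lem:short intervals not divide n+1} and does not improve the order of magnitude. The lemma has to be proved by exploiting the clustering of the small prime divisors, not by the pairwise trichotomy.
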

\begin{proof}
For $1\leq\nu\leq M$ let $a_\nu$ be the number of prime divisors of $n$ in the interval $[\nu^2, (\nu+1)^2)$. If $p$ is a prime divisor of $n$ in $[\nu^2, (\nu+1)^2)$, then by the lower bound contained in Lemma~\ref{Lem:Lenstra} we have with probability $\geq \frac{a_\nu-1}{\nu\log\nu\log_2\nu}$ that $(p-a_p+1, n)\geq\nu$.
\end{proof}

\begin{Lem}
\label{Lem:a_n=0}
The probability that $n$ is elliptic Carmichael for a random curve $E$ and at the same time $a_n(E)=0$ is $\mathcal{O}(\log^{-1}n)$.
\end{Lem}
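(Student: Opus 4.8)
The plan is to exploit first the feature that makes the hypothesis $a_n=0$ special: it fixes the target modulus. If $n$ is Carmichael for $E$ and $a_n=0$, then $n-a_n+1=n+1$, an integer that no longer depends on $E$. By Lemma~\ref{Lem:Carmichael crit} and Lemma~\ref{Lem:Exponent} we get $p^{\nu_p(n)-1}\exp(E(\Z/p\Z))\mid n+1$ for every $p\mid n$; since $\gcd(p,n+1)=1$ this forces $\nu_p(n)=1$, so only squarefree $n$ can occur, and then $\exp(E(\Z/p\Z))\mid n+1$ for all $p\mid n$. Feeding this through Lemma~\ref{Lem:exponent 2generated}, every prime dividing $|E(\Z/p\Z)|=p+1-a_p$ already divides $e(|E(\Z/p\Z)|)\mid\exp(E(\Z/p\Z))\mid n+1$, so I obtain the clean necessary condition $\gcd(p+1-a_p,\,n)=1$ for every $p\mid n$. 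Thus I may assume $n$ squarefree and must bound the probability that $\exp(E(\Z/p\Z))\mid n+1$ holds simultaneously for all $p\mid n$, with $a_p=0$ for at least one $p$.

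The second step isolates the case of a large prime factor. For a single prime $p$ the condition $\exp(E(\Z/p\Z))\mid n+1$ forces $a_p\in S_p$, where $S_p$ is the set of admissible traces; using $e(N)\mid N$ together with $e(N)\ge\sqrt N$, each $N=p+1-a_p$ with $e(N)\mid n+1$ has $e(N)$ equal to a divisor $d$ of $n+1$ in $[\tfrac12\sqrt p,2p]$ with $d\mid N$, and each such $d$ divides at most nine integers $N$ in the interval $I_p$ of length $4\sqrt p$, whence $|S_p|\ll\#\{d\mid n+1:\sqrt p/2\le d\le 2p\}\ll\tau(n+1)$. Lemma~\ref{Lem:Lenstra} then gives $\mathrm{Prob}(\exp(E(\Z/p\Z))\mid n+1)\ll\tau(n+1)\,\frac{\log p\log_2^2 p}{\sqrt p}$. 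Applied to $p=P^+(n)$, whenever $P^+(n)\ge n^{\delta}$ for a fixed $\delta>0$ this is $\ll_\varepsilon n^{-\delta/2+\varepsilon}$, a power saving far stronger than needed, which also recovers the spirit of Lemma~\ref{Lem:largep}.

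The third step treats the opposite extreme, where every prime factor of $n$ is small and hence $\omega(n)$ is large. Here I would invoke the mechanism of the preceding lemma, which via the lower bound of Lemma~\ref{Lem:Lenstra large S} shows that with probability close to $1$ some prime $p\mid n$ has $|E(\Z/p\Z)|$ divisible by a second prime $p'\mid n$ lying in the same short interval. Under $a_n=0$ this is immediately fatal: such a $p'$ would divide $\exp(E(\Z/p\Z))\mid n+1$ while also dividing $n$, contradicting the coprimality $\gcd(p+1-a_p,n)=1$ established above (equivalently $p'\mid a_n-1=-1$). Because the reductions of $E$ modulo distinct primes are independent, these avoidance events multiply, and the probability of the coprimality condition, hence of the whole event, is $\ll\log^{-1}n$ as soon as $\sum_\nu a_\nu(a_\nu-1)/(\nu\log\nu)\gg\log_2 n$, which holds throughout the many-small-primes range.

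The main obstacle is the intermediate regime, where $P^+(n)$ exceeds a power of $\log n$ but is smaller than every fixed power of $n$: the single-prime bound of the second step is then defeated by the factor $\tau(n+1)$, which may be as large as $n^{o(1)}$, while the clustering count of the third step is too small to force a coincidence. The delicate point is that $\tau(n+1)$ can inflate $|S_p|$ simultaneously for many primes only when $n+1$ is abnormally composite; controlling this requires replacing the worst-case bound $|S_p|\ll\tau(n+1)$ by an estimate for the average $\sum_{p\mid n}|S_p|$, which is genuinely small since $\#\{N\le X:e(N)\mid n+1\}\ll\tau(n+1)\sqrt X$, and then feeding the coprimality conditions $\gcd(p+1-a_p,n)=1$ for several primes at once into the independent product over $p\mid n$. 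Making this averaging uniform across the whole window $[\,\mathrm{polylog}\,n,\;n^{\delta}\,]$, and in particular keeping under control the rare primes with large $|S_p|$, is where the real work lies; Lemma~\ref{Lem:p1p2} and the divisor estimates underlying Lemma~\ref{Lem:ABC} are the natural tools for pushing it through.
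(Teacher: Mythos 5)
Your step 1 is a correct and useful reduction: $a_n=0$ together with Lemmas~\ref{Lem:Carmichael crit}, \ref{Lem:Exponent} and \ref{Lem:exponent 2generated} forces $n$ to be squarefree and gives $\gcd(p+1-a_p,n)=1$ for every $p\mid n$; and your step 2 correctly disposes of the case $P^+(n)\geq n^{\delta}$. But the argument has two genuine gaps. First, the clustering mechanism of step 3 only bites when $n$ has many pairs of prime divisors lying in a common interval of length $O(\sqrt p)$, and your claimed inequality $\sum_\nu a_\nu(a_\nu-1)/(\nu\log\nu)\gg\log_2 n$ does \emph{not} hold throughout the many-small-primes range: take $n$ to be a product of $\asymp\log n/\log_2 n$ primes, each of size about $\log^3 n$ and pairwise separated by more than $4\sqrt p\asymp\log^{3/2}n$. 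Then every $a_\nu\leq 1$, the sum vanishes, and since $2p'>p+2\sqrt p$ for any two such primes, no prime divisor of $n$ can divide $p+1-a_p$ for another, so the coprimality condition costs nothing. The paper's proof avoids clustering entirely in this regime: by Lemma~\ref{Lem:Siegel zero}, with probability $1-O(\log^{-1}n)$ some prime divisor $p$ of $n$ satisfies $a_p=1$, whence $p+1-a_p=p$ divides both $n$ and $n+1$, a contradiction that requires no second prime near $p$.

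Second, and more seriously, you explicitly leave open the intermediate regime $\log^{O(1)}n\leq P^+(n)\leq n^{\delta}$, and that is where the bulk of the work lies. The missing tool is Lemma~\ref{Lem:short intervals not divide n+1}: the number of integers $N$ in the Hasse interval $[p-2\sqrt p,p+2\sqrt p]$ that can be written as $dt$ with $d<p^{2/3}$ and $t$ composed only of primes dividing $n+1$ is $O\bigl(\omega(n+1)+\sqrt p\,\log\omega(n+1)/\log p\bigr)$, so combined with Lemma~\ref{Lem:Lenstra} the probability of $e(p-a_p+1)\mid n+1$ for a single prime $p\mid n$ is $O(\log_2 n/\log p)$. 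This beats your worst-case bound $\tau(n+1)\log p/\sqrt p$, which gives no saving at all for $p$ of polylogarithmic size. With it, two prime divisors exceeding $e^{\log^{2/3}n}$, or $\log_2 n$ prime divisors exceeding $\log^C n$, or (via Lemma~\ref{Lem:nonsmooth} and the uniqueness of a prime factor $q>4\sqrt p$ of $p-a_p+1$) $\log_2^2 n$ prime divisors exceeding $\log^{2-c}n$, already yield independent events whose product is $O(\log^{-1}n)$, and the only case left is the one settled by the Siegel-zero device above. Without a substitute for this short-interval divisor count, your proposed averaging of $\sum_{p\mid n}|S_p|$ cannot close the window, so the proof as it stands is incomplete.
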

\begin{proof}
Suppose that $n$ has two prime divisors $p_1, p_2>e^{\log^{2/3} n}$. Then we obtain $e(p_i-a_p+1)|n+1$, and by Lemma~\ref{Lem:short intervals not divide n+1} the probability for this event is $\ll\frac{\log\omega(n+1)}{\log p_i}\leq\frac{\log_2 n}{\log^{2/3} n}$. Since these two events are independent, our claim follows in this case.

The same argument applies if $n$ has $\geq \log_2 n$ prime divisors $\geq \log^C n$, where $C$ is a sufficiently large constant.

Let $c>0$ be a constant as in Lemma~\ref{Lem:nonsmooth}. Suppose that $n$ has $\geq \log_2^2 n$ prime divisors which are $\geq\log^{2-c} n$. If $q>4\sqrt{p}$ is a prime divisor of $p-a_p+1$, then $q$ does not divide any other number of the form $p-a+1$, $|a|\leq2\sqrt{p}$. By Lemma~\ref{Lem:nonsmooth} we have that the probability for the event $P^+(p-a_p+1)>p^{1/2+c}$ is bounded away from 0, together with the fact that $n$ has at most $\log n$ different prime divisors we conclude that probability for the event that $P^+(p-a_p+1)>p^{1/2+c}$ and $P^+(p-a_p+1)\nmid n+1$ is bounded away from 0. But in the latter case we either have $a_n\neq 0$ or that $n$ is not $E$-Carmichael. Since these events are independent for different $p$, we see that in this case our claim holds true as well.

Now assume that all prime divisors of $n$ are $\leq n^{0.7}$, at most one prime divisor is  $\geq e^{\log^{2/3} n}$, and at most $\log_2 n$ prime divisors of $n$ are $\geq\log^C n$. We then give a lower bound for the product $m$ of all prime divisors $p<\log^C n$ of $n$, such that $p^2\nmid n$. We have
\[
m>\frac{n}{P^+(n) e^{\log^{2/3} n\log_2 n} (\log^C n)^{\log_2^3 n}} > n^{0.2},
\]
hence, $n$ has at least $0.1\frac{\log n}{\log_2 n}$ prime divisors $p<\log^{3/2} n$, such that $p^2\nmid n$. It then follows from Lemma~\ref{Lem:Siegel zero} that the probability for the event that there exists a prime divisor $p$ of $m$ such that $a_p=1$ is
\[
\geq 1-\left(1-\frac{c}{\log^{3/4} n\log_2 n\log_3 n}\right)^{0.1\log n/\log_2 n} \geq 1-e^{-c'\frac{\log^{1/4} n}{\log_2^2 n\log_3 n}} \geq 1-\frac{1}{\log^{-1} n}, 
\]
hence, we may assume that there exists some $p$ with $a_p=1$. But then $p-a_p+1=p|n+1$, contradicting $p|n$, and the proof is complete.
\end{proof}

Therefore it suffices to consider the probability that $n$ is elliptic Carmichael and satisfies $a_n(E)\neq 0$.

\begin{Lem}
\label{Lem:many medium}
There exists a constant $C$ such that if $n$ is an integer, which is divisible by $>\log_2 n$ prime numbers $p>\log^C n$, then  the probability that $n$ is elliptic Carmichael for a random curve $E$ is $\mathcal{O}(\log^{-1} n)$.
\end{Lem}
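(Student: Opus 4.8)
The plan is to single out the smallest medium prime as a reference, use Lemma~\ref{Lem:p1p2} to convert the joint Carmichael condition into a family of \emph{independent} conditions indexed by the remaining medium primes, and then multiply the resulting probabilities.

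First I would dispose of prime powers. By Lemma~\ref{Lem:gammabound2} we may assume $n/\gamma(n)\le\log^4 n$; since a prime with $p^2\mid n$ satisfies $p^{\nu_p(n)-1}\mid n/\gamma(n)$, every prime divisor $p>\log^C n$ (with $C>4$) then occurs to the first power, so Lemma~\ref{Lem:p1p2} applies to any pair of them. Write $v_i=|E(\Z/p_i\Z)|=p_i+1-a_{p_i}$ for the medium primes $p_1<p_2<\dots<p_L$, $L>\log_2 n$, and recall we may also assume $a_n\neq0$. Applying Lemma~\ref{Lem:p1p2} to each pair $(p_1,p_i)$, $i\ge2$, the Carmichael hypothesis forces, for every $i$, one of the three alternatives: (1) $a_{p_1}$ is determined by the remaining data; (2) $e(v_i)<4\sqrt{p_1}(p_i/p_1)^{1/6}$; (3) $(e(v_i),n+1)>p_i^{1/3}$.

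Next I would show that alternatives (1) and (2) are negligible. If (1) holds for even one $i$, then conditional on the curve modulo all other primes $a_{p_1}$ is pinned to $O(1)$ values, so by Lemma~\ref{Lem:Lenstra} this happens with probability $\ll \log p_1\log_2^2 p_1/\sqrt{p_1}\ll\log^2 n/\log^{C/2}n$. Alternative (2) forces $p_i\le 512\,p_1$, i.e.\ it can occur only for primes within a bounded factor of $p_1$, and for these it requires $e(v_i)$ to be within a bounded factor of its minimal value $\sqrt{v_i}$, which by Lemma~\ref{Lem:ABC} has probability $\ll p_i^{-1/6+\epsilon}$; summing over the at most $\omega(n)$ medium primes keeps this negligible. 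Hence, up to an acceptable error, the Carmichael hypothesis forces alternative (3) simultaneously for all $i\ge2$.

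The decisive point is that alternative (3) depends only on $v_i$ and on the \emph{fixed} integer $n+1$, so these events are genuinely independent over $i$. To estimate each one I would note that $(e(v_i),n+1)>p_i^{1/3}$ forces $v_i=dt$ with $t\mid n+1$, $t>p_i^{1/3}$ and $d<2p_i^{2/3}$; Lemma~\ref{Lem:short intervals not divide n+1}, applied with $\mathcal P$ the set of prime divisors of $n+1$ and the Hasse interval covered by $O(1)$ intervals of length $\sqrt{p_i}$, then bounds the number of admissible $v_i$ by $\ll\omega(n+1)+\sqrt{p_i}\,\log\omega(n+1)/\log p_i$. This is exactly the step where Lemma~\ref{Lem:short intervals not divide n+1} is indispensable: it replaces the divisor count $\tau(n+1)=n^{o(1)}$, which would be fatal against the merely poly-logarithmic $\sqrt{p_i}>\log^{C/2}n$, by the far smaller $\omega(n+1)\ll\log n/\log_2 n$. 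Feeding this count into Lemma~\ref{Lem:Lenstra large S} and multiplying the $L>\log_2 n$ independent factors is intended to yield $\mathcal O(\log^{-1}n)$.

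The main obstacle is precisely this last multiplication. The per-prime estimate carries the multiplicative loss $\log_2^{O(1)} p_i$ coming from Lemma~\ref{Lem:Lenstra large S}, whereas the arithmetic gain is only $\log\omega(n+1)/\log p_i\asymp 1/C$; for the smallest medium primes, where $\log p_i\asymp\log_2 n$, this loss is not negligible, and no \emph{fixed} $C$ drives the single-prime failure probability below $e^{-1}$, so the naive product of $\log_2 n$ factors need not reach $\log^{-1}n$. I expect the resolution to require splitting the medium range: the primes with $\log p_i\gg\log_2^{4} n$ individually contribute factors that are $o(1)$, and one must then argue either that enough of these are present, or that the remaining narrow band of genuinely small medium primes is handled separately by the common-factor mechanism built on the lower bound of Lemma~\ref{Lem:Lenstra large S}. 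Making the bookkeeping close in this borderline regime is where I anticipate the real difficulty.
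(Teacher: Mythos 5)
Your proposal follows the paper's proof essentially step for step: pair each medium prime $p_i$ with the smallest one $p_1$ via Lemma~\ref{Lem:p1p2}, dispose of alternatives (1) and (2) using Lemma~\ref{Lem:Lenstra} and Lemma~\ref{Lem:ABC}, bound the probability of alternative (3) by feeding the prime divisors of $n+1$ into Lemma~\ref{Lem:short intervals not divide n+1}, and multiply the resulting independent per-prime probabilities. Your preliminary reduction to $p_i^2\nmid n$ via Lemma~\ref{Lem:gammabound2} is a hypothesis of Lemma~\ref{Lem:p1p2} that the paper invokes without comment, so that step is a welcome addition rather than a detour.

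The difficulty you flag at the end is real, but you have not introduced it --- it is present in the paper's own proof. The paper records the per-prime probability of alternative (3) as $\frac{\log n}{\log_2 n}\frac{\log p\log_2 p}{\sqrt{p}}+\frac{\log_2 n}{\log p}$ and asserts the second summand is $\leq e^{-1}$ once $C$ is large; but that summand is only the density $|S|/(4\sqrt{p})$ of the admissible set of group orders, and converting a cardinality bound into a probability through Lemma~\ref{Lem:Lenstra} or Lemma~\ref{Lem:Lenstra large S} costs an additional factor $\log_2^{O(1)}p\asymp\log_3^{O(1)}n$, exactly as you compute. For the smallest admissible primes, $\log p\asymp C\log_2 n$, this makes the per-prime bound $\asymp\log_3^{O(1)}n/C$, which exceeds $e^{-1}$ for every fixed $C$ once $n$ is large, so the product over the $>\log_2 n$ primes does not close as written. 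Your suggested repair --- splitting off the primes with $\log p\gg\log_2 n\log_3^{O(1)}n$, for which the product is already $\ll\log^{-1}n$, and treating the remaining narrow band by a different mechanism --- is a reasonable description of what an airtight version would need; the paper does not supply it. In short, you have reproduced the intended argument and, if anything, audited it more carefully than the source.
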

\begin{proof}
Let $p_1, \ldots, p_k$ be the prime divisors of $n$ which satisfy $p_i\geq \log^C n$.
We put the pair $p_1, p_i$ into Lemma~\ref{Lem:p1p2}. We may assume that $C\geq 7$. The probability that (1) holds true is $\ll\frac{\log_2 p_1\log_3^2 p_1}{\sqrt{p_1}}\ll\log^{-2} n$. Using Lemma~\ref{Lem:ABC} we see that the probability that $p_i$, $i\geq 2$ satisfies (2) is $\ll \frac{p_1^{1/3}\log p_2\log_2^2 p_2}{p_2^{5/6}} + \frac{\log^4 p_2\log_2^2 p_2}{p_2^{1/3}}$, which is also $\ll\log^{-2} n$. We see that the probability that (1) or (2) holds true for at least one index $i$ is bounded above by $\frac{k}{\log^2 n}<\frac{1}{\log n}$.

Taking for $\mathcal{P}$ the set of prime divisors of $n+1$ in Lemma~\ref{Lem:short intervals not divide n+1} and using Lemma~\ref{Lem:Lenstra} we see that the probability that $p_i$, $i\geq 2$ satisfies (3) is bounded above by
\[
\frac{\log n}{\log_2 n}\frac{\log p_i\log_2 p_i}{\sqrt{p}} + \frac{\log_2 n}{\log p}.
\]
If $p>\log^C n$, the first summand is negligible as soon as $C>2$, while the second summand becomes $\leq e^{-1}$ provided that $C$ is sufficiently large. Since the third condition of Lemma~\ref{Lem:p1p2} depends only on the second prime, we see that these events are independent, and that the probability that each $p_i$  satisfies (3) is $e^{-k}\leq\frac{1}{\log n}$.
\end{proof}

\begin{Lem}
\label{Lem:some pretty large}
Suppose that $n$ has $5$ prime divisors $p_1, \ldots, p_4$, such that $p_1>\log^4 n$, and $p_2, \ldots, p_4>e^{\sqrt{\log n}}$, or $3$ prime divisors, such that $p_1>\log^4 n$, and $p_2, p_3>p^{0.01}$. Then the probability that $n$ is elliptic Carmichael for a random curve $E$ is $\ll\log^{-1} n$.
\end{Lem}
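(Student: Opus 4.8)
We treat the two alternatives of the hypothesis in parallel: in the first, $n$ has four prime divisors $p_1,\dots,p_4$ with $p_1>\log^4 n$ and $p_2,p_3,p_4>e^{\sqrt{\log n}}$; in the second, three prime divisors $p_1,p_2,p_3$ with $p_1>\log^4 n$ and $p_2,p_3>n^{0.01}$. The plan is to rerun the pairing strategy from the proof of Lemma~\ref{Lem:many medium}. The point is that now only three or four large prime factors are available, so the probability attached to each single prime must be driven much further down before the product over the few available primes drops below $\log^{-1}n$; this is exactly why the two thresholds $e^{\sqrt{\log n}}$ and $n^{0.01}$ occur.

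First I would remove the non-squarefree obstruction. If $\gamma(n)<n/\log^4 n$, then Lemma~\ref{Lem:gammabound2} already gives $\mathcal{O}(\log^{-1}n)$, so I may assume $\gamma(n)\geq n/\log^4 n$. In this range any prime with $p^2\mid n$ divides $n/\gamma(n)\leq\log^4 n$ and is therefore $\leq\log^4 n$; since every prime named in the hypothesis exceeds $\log^4 n$, each divides $n$ exactly once. Relabelling so that $p_1$ is the smallest of the listed primes, one checks that $p_1>\log^4 n$ still holds and that the remaining primes retain their lower bounds (either $p_1$ is the prime only guaranteed $>\log^4 n$, in which case the others are the designated large primes, or $p_1$ is itself a large prime, and then the others, being $\geq p_1$, inherit the larger bound). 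Thus Lemma~\ref{Lem:p1p2} applies to every pair $(p_1,p_i)$, $i\geq 2$.

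For each such pair I would use the trichotomy of Lemma~\ref{Lem:p1p2}, writing $A_i$ for ``alternative (1) or (2) holds'' and $B_i$ for alternative (3). As in Lemma~\ref{Lem:many medium}, Lemma~\ref{Lem:Lenstra} (in the worst case $p_1\approx\log^4 n$, where $\log p_1\approx\log_2 n$) together with Lemma~\ref{Lem:ABC} for alternative (2) bounds $\mathbb{P}(A_i)\ll\log^{-2+\epsilon}n$, so $\mathbb{P}(\bigcup_i A_i)\ll\log^{-2+\epsilon}n$ after summing over the $\mathcal{O}(1)$ indices. For $B_i$ I would relax alternative (3) to the event $\{(e(p_i-a_{p_i}+1),n+1)>p_i^{1/3}\}$, discarding the divisibility $t\mid a_d a_{p_i}$; since $e(m)\mid m$, this forces $m=p_i-a_{p_i}+1$ to have a divisor $>p_i^{1/3}$ built from primes of $n+1$, an event depending only on $a_{p_i}$, so that the $B_i$ become mutually independent. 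Counting such $m$ by Lemma~\ref{Lem:short intervals not divide n+1} with $\mathcal{P}$ the prime divisors of $n+1$ and feeding the count into Lemma~\ref{Lem:Lenstra} gives, by the same computation as in the proof of Lemma~\ref{Lem:many medium}, $\mathbb{P}(B_i)\ll\frac{\log n}{\log_2 n}\frac{\log p_i\log_2 p_i}{\sqrt{p_i}}+\frac{\log_2 n}{\log p_i}$, whose first term is negligible.

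Finally I would combine via the inclusion $\bigcap_i(A_i\cup B_i)\subseteq\big(\bigcup_i A_i\big)\cup\big(\bigcap_i B_i\big)$, which yields $\mathbb{P}(n\text{ is }E\text{-Carmichael})\leq\sum_i\mathbb{P}(A_i)+\prod_i\mathbb{P}(B_i)$ by independence of the $B_i$. In the four-prime case $\log p_i>\sqrt{\log n}$ gives $\mathbb{P}(B_i)\ll\log_2 n/\sqrt{\log n}$, and the product over $i=2,3,4$ is $\ll(\log_2 n)^3\log^{-3/2}n\ll\log^{-1}n$; in the three-prime case $\log p_i>0.01\log n$ gives $\mathbb{P}(B_i)\ll\log_2 n/\log n$, and the product over $i=2,3$ is $\ll(\log_2 n)^2\log^{-2}n\ll\log^{-1}n$. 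With the $\log^{-2+\epsilon}n$ from $\sum_i\mathbb{P}(A_i)$ this proves the claim. I expect the main obstacle to be precisely the apparent dependence of alternative (3) on $a_d$, hence on the other large primes, which would destroy the independence the product bound rests on; relaxing (3) to $\{(e(p_i-a_{p_i}+1),n+1)>p_i^{1/3}\}$ is what repairs it, and the delicate point is that a product of only two or three factors must already beat $\log^{-1}n$, forcing the strong size hypotheses.
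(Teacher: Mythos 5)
Your proof follows the same route as the paper's: apply Lemma~\ref{Lem:p1p2} to the pairs $(p_1,p_i)$, bound alternatives (1) and (2) by $\ll\log^{-2+\epsilon}n$, and multiply the probabilities of alternative (3) over the two or three remaining primes using independence. The paper's own proof is only a two-line sketch deferring to the argument of Lemma~\ref{Lem:many medium}; your additional steps (reducing to the squarefree case via Lemma~\ref{Lem:gammabound2} so that Lemma~\ref{Lem:p1p2} applies, and relaxing alternative (3) to an event depending only on $a_{p_i}$ to secure independence) are correct and make explicit details the paper leaves implicit.
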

\begin{proof}
We argue as in the proof of the previous theorem. The probability that $p_1$ satsifies (1) or that $p_i$, $i\geq 2$ satisfies (2) is $\ll\log^{-1} n$. In the first case the probability that one specific $p_i$, $i\geq 2$ satisfies (3) is $\ll\frac{\log_2^4 n}{\sqrt{\log n}}$, while in the second case the probaiblity that one specific $p_i$ satisfies (3) is $\ll\frac{\log_2^4 n}{\log n}$. Since the probabilities for different $p_i$ are independent we see that in each case the probability for the event that $n$ is elliptic Carmichael is $\ll\log^{-1} n$.
\end{proof}
\begin{Lem}
There exists a $c>0$, such that the following holds true. 
Suppose that $n$ has $\geq \log_2^3 n$ prime divisors $p$ satisfying $p>\log^{2-c/2} n$. Then the probability that $n$ is elliptic Carmichael for a random curve $E$ is $\mathcal{O}(\log^{-1} n)$.
\end{Lem}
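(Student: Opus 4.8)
The plan is to run the same per-prime independence scheme as in the proofs of Lemmas~\ref{Lem:a_n=0} and~\ref{Lem:many medium}, but to profit from the sheer number $\log_2^3 n$ of admissible primes rather than from the rarity of any single obstruction. Let $p_1,\ldots,p_k$ with $k\ge\log_2^3 n$ be the prime divisors of $n$ with $p_i>\log^{2-c/2}n$; we may assume $p_i^2\nmid n$, since a prime of this size with $p_i^2\mid n$ already yields probability $\mathcal{O}(\log^{-1}n)$ by Lemma~\ref{Lem:gammabound2}. Write $n=p_i m_i$, so that by multiplicativity $a_n=a_{p_i}A_i$ with $A_i:=a_{m_i}$ independent of $a_{p_i}$. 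For each $i$ I would consider the event
\[
O_i:\qquad q_i:=P^+(p_i-a_{p_i}+1)>p_i^{1/2+c}\quad\mbox{and}\quad q_i\nmid n-a_n+1 .
\]
If $O_i$ occurs then $n$ is not $E$-Carmichael: since $q_i>p_i^{1/2+c}$ we have $q_i^2>p_i-a_{p_i}+1$, so $q_i$ divides $p_i-a_{p_i}+1$ to the first power and hence $q_i\mid e(p_i-a_{p_i}+1)\mid\exp(E(\Z/p_i\Z))$ by Lemmas~\ref{Lem:exponent 2generated} and~\ref{Lem:Exponent}, while $q_i\nmid n-a_n+1$ contradicts the criterion of Lemma~\ref{Lem:Carmichael crit}. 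Thus it suffices to bound the probability that no $O_i$ occurs.

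The point that makes the condition $a_n\neq 0$ manageable is a collapse of the divisibility $q_i\mid n-a_n+1$. Putting $m=p_i-a_{p_i}+1$ and $q=P^+(m)>p_i^{1/2+c}$, so that $a_n=(p_i+1-m)A_i$, we obtain
\[
n-a_n+1=\big(n+1-(p_i+1)A_i\big)+mA_i\equiv B_i\pmod{q},\qquad B_i:=n+1-(p_i+1)A_i ,
\]
because $q\mid m$. Hence $q\mid n-a_n+1$ is equivalent to $q\mid B_i$, where $B_i$ is a fixed integer independent of $a_{p_i}$ with $|B_i|\ll n^{1+o(1)}$. Consequently the values of $a_{p_i}$ violating $O_i$ are exactly those for which $P^+(p_i-a_{p_i}+1)$ equals one of the prime factors of $B_i$ exceeding $p_i^{1/2+c}$; there are $\ll\log n/\log p_i$ such primes, and as $q_i>4\sqrt{p_i}$ each pins $a_{p_i}$ to a single admissible value, so at most $\mathcal{O}(\log n/\log p_i)$ values of $a_{p_i}$ are excluded. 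On the other hand Lemma~\ref{Lem:nonsmooth} produces $\gg\sqrt{p_i}$ integers $m$ in $[p_i-2\sqrt{p_i},p_i+2\sqrt{p_i}]$ with $P^+(m)>p_i^{1/2+c}$, and the lower bound of Lemma~\ref{Lem:Lenstra large S} converts this positive-density set, after removing the excluded values, into a bound $\Pr[O_i]\ge\delta$; the target is that $\delta$ be an absolute positive constant, the positive density coming from Lemma~\ref{Lem:nonsmooth} and the $\log_2$-factor losses of Lemma~\ref{Lem:Lenstra large S} being the only obstruction. Since $O_i$ depends only on $a_{p_i}$ once the other coefficients are fixed, the events are conditionally independent, and the probability $P$ that $n$ is $E$-Carmichael satisfies
\[
P\le\prod_{i=1}^{k}\big(1-\Pr[O_i]\big)\le(1-\delta)^{\log_2^3 n}\ll\log^{-1}n ,
\]
the final inequality being robust whenever $\delta$ is bounded below by a fixed power of $\log_2 n$, the third power in $\log_2^3 n$ precisely absorbing the $\log_2$-losses in $\delta$.

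The step I expect to be the real obstacle is the uniform lower bound $\Pr[O_i]\ge\delta$ down to primes as small as $\log^{2-c/2}n$. For such $p_i$ the number $\sqrt{p_i}\approx\log^{1-c/4}n$ of admissible values of $a_{p_i}$ is only barely larger than the number of prime factors of $B_i$, which can be as large as $\log n/\log_2 n$ (and equals $\omega(n+1)$ in the extreme case $a_n=0$), so if $B_i$ happens to carry abnormally many prime factors the subtraction of excluded values above is not automatically favourable. Resolving this is what forces the two design choices in the statement. First, only prime factors of $B_i$ exceeding $p_i^{1/2+c}>\log^{1+c}n$ can interfere, and since each must divide some $m$ in a window of length $4\sqrt{p_i}$ it captures only $\mathcal{O}(1)$ values; checking that the threshold exponent $2-c/2$ keeps $\sqrt{p_i}$ safely above $\log n/\log p_i$ is the crux. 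Second, for the genuinely borderline primes I would fall back on the clustering obstruction $(p_i-a_{p_i}+1,n)\ge\nu$ used in the preceding lemma, whose probability is governed by the sharp regime $N\ll|\mathcal{P}|$ of Lemma~\ref{Lem:short intervals not divide n+1}; balancing the two types of obstruction against the size of $\omega(n+1)$ is exactly what dictates the requirement of $\log_2^3 n$ rather than merely $\log_2^2 n$ admissible primes.
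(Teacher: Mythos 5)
Your overall strategy is the paper's: use Lemma~\ref{Lem:nonsmooth} together with the lower bound of Lemma~\ref{Lem:Lenstra large S} to show that for many of the primes $p_i$ the number $p_i-a_{p_i}+1$ acquires a prime factor $q_i>p_i^{1/2+c}$, which must then divide $n-a_n+1$, and to argue that this is unlikely. Your reduction of $q_i\mid n-a_n+1$ to $q_i\mid B_i$ with $B_i=n+1-(p_i+1)a_{n/p_i}$ is correct and in fact makes explicit a step the paper handles differently (the paper works with the fixed integer $n+1$ and disposes of the residual congruence with a final bound of the form ``Carmichael with probability $\leq 1/q$''). However, there are two genuine gaps. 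First, your events $O_i$ are \emph{not} conditionally independent: $B_i$ depends on $a_{p_j}$ for every $j\neq i$, so $O_1$ is a function of $a_{p_2},\dots,a_{p_k}$ as well as of $a_{p_1}$, and the product bound $\prod_i(1-\Pr[O_i])$ does not follow from conditioning. The paper's per-prime events --- $P^+(p-a_p+1)>p^{1/2+c}$ and this prime does not divide $n+1$ --- depend only on the curve modulo $p$ and on the fixed integer $n+1$, which is exactly what buys the independence; by replacing $n+1$ with $B_i$ you have traded away the property that legitimizes multiplying the probabilities.

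Second, the ``crux'' you flag does not check out, and you have the comparison backwards. For $p_i$ near the threshold $\log^{2-c/2}n$ the number of admissible values of $a_{p_i}$ produced by Lemma~\ref{Lem:nonsmooth} is $\asymp\sqrt{p_i}\asymp\log^{1-c/4}n$, while the number of prime factors of $B_i$ exceeding $p_i^{1/2+c}$ can be as large as $\asymp\log n/\log_2 n$; since $\log^{c/4}n\gg\log_2 n$, the excluded set can be \emph{larger} than the good set, so no positive $\delta$ survives at the bottom of the range. (A union bound over the excluded values fails for the same reason: it gives $\asymp\log^{c/4}n\,\log_3^2 n>1$.) Your proposed fallback for these borderline primes --- the clustering obstruction from the preceding lemma --- is not developed, and that lemma cannot be invoked as a black box here. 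Two smaller points: the case $B_i=0$, in which every $q$ divides $B_i$ and $O_i$ is impossible, is not addressed; and Lemma~\ref{Lem:gammabound2} does not apply to a single prime $p_i>\log^{2-c/2}n$ with $p_i^2\mid n$, since then $\gamma(n)$ need not be smaller than $n/\log^4 n$ --- you need at least three such primes before that lemma applies, and should simply discard the at most two exceptional ones.
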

\begin{proof}
Let $c$ be as in Lemma~\ref{Lem:nonsmooth}. We may assume that at least half of the prime divisors $p$ of $n$, which satisfy $p>\log^{2-c/2} n$ also satisfies $p<\log^C n$, where $C$ is as in Lemma~\ref{Lem:many medium}, for otherwise we can apply the latter.  Cut the interval $[\log^{2-c/2} n, \log^C n]$ into $\mathcal{O}(1)$ intervals of the form $[x, x^{1+c}]$. Then one of these intervals contains $\gg\log_2^3 n$ prime divisors of $n$, let $[x, x^{1+c}]$ be the largest one of them. In particular the number of prime divisors $p$ of $n$ with $p>x^{1+c}$ is $\mathcal{O}(\log_2^3 n)$. For each prime divisor $p$ of $n$ with $x<p<x^{1+c}$ we have that $p-a_p+1$ has a prime divisor $q>p^{1/2+c}\geq x^{1/2+c}$ with probability $\gg \frac{1}{\log p\log_2^2 p}\gg \frac{1}{\log_2 n\log_3^2 n}$. The probability that this prime is also a prime divisor of $n+1$ is $\ll\log^{-c/3} n$, hence with probability $\geq 1-(1-\frac{C}{\log_2 n\log_3^2 n})^{c\log_2^3 n}\geq 1-\mathcal{O}(\log^{-1} n)$ we have that there exists a prime $q>x^{1/2+c}$, which divides some $p-a_p+1$ for some $p|n$, but $q\nmid n+1$.

But then $n$ is Carmichael with probability $\leq\frac{1}{q}\ll\frac{1}{\log n}$, and our claim follows.
\end{proof}

We can now prove part 2 of Theorem~\ref{thm:main}. It follows from Lemma~\ref{Lem:largep}, Lemma~\ref{Lem:many medium} and Lemma~\ref{Lem:some pretty large}, that either our claim is true, or $n$ has $\geq c\frac{\log n}{\log\log n}$ prime divisors in the interval $[c\log n,  \log^{2-c} n]$. In the latter case there exists some interval $[y, y+y^{1/2}]$, $c\log n<y<\log^{2-c} n$, which contains $\geq \log^{1/2+c/3} n$ prime divisors of $n$. For each of these prime divisors the probability that $p-a_p+1$ happens to be another prime divisor of $n$ is $>\log^{-1/2+c/4} n$, and for different prime divisors these probabilities are independent. We can therefore apply \v Cernov's inequality to find that with probability $>1-\log^{-1} n$ we have that there are $>\log^{c/2} n$ prime divisors $p_1, \ldots, p_k$ of $n$, such that for each $p_i$ there exists some $q_i$, such that $p_i=q_i-a_{q_i}+1$. In particular for each of these prime divisors $p_i$ we have that $p_i|n-a_n+1$, since $p_i$ also divides $n$, we obtain $a_n\equiv 1\pmod{\prod p_i}$. Pick three prime divisors $c\log n<r_1, r_2, r_3<\log^2 n$ of $n$, which are not among the $q_i$, and such that $r_i^2\nmid n$. If such prime numbers do not exist, we discard some of the $q_i$ in such a way, that the remainder still contains $\log^{c/2} n$ primes. 

If $n$ is $E$-Carmichael, then $a_{r_1r_2r_3}$ is uniquely determined modulo $\prod p_i>8\log^3 n$, hence this product is in fact uniquely determined. If $a_{r_1r_2r_3}=0$, then our claim follows from Lemma~\ref{Lem:a_n=0}. Otherwise if $a_{r_1r_2r_3}$ is given, then $(a_{r_1}, a_{r_2}, a_{r_3})$ can be chosen in
\[
\tau_3(a_{r_1r_2r_3}) \ll e^{c\frac{\log_2 n}{\log_3 n}}
\]
ways, and for each possible choice is realized with probability $\ll\prod_{i=1}^3\frac{\log r_i\log_2 r_i}{\sqrt{r_i}}$. Hence the probability that $n$ is $E$-Carmichael is bounded above by
\[
\prod_{i=1}^3\frac{\log r_i\log_2 r_i}{\sqrt{r_i}}e^{c\frac{\log_2 n}{\log_3 n}} \ll \log^{-3/2+\epsilon} n,
\]
and the proof id complete.

\section{Proof of Theorem~\ref{thm:main}: The probability for $n$ and $E$ variable}

The proof of the third part of Theorem~\ref{thm:main} is similar to the proof of the second part, but a lot easier, since we can dispose of integers with strange multiplicative behaviour immediately.

\begin{Lem}
\label{Lem:n variable p large}
Let $n\in[x, 2x]$ and $E$ be chosen at random. Then the probability that $P^+(n)>y$ and that $n$ is $E$-Charmichael is $\mathcal{O}(x^\epsilon y^{-1/2})$.
\end{Lem}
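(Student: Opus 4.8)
The plan is to extract the largest prime factor $p=P^+(n)>y$ and to exploit the elliptic Carmichael criterion (Lemma~\ref{Lem:Carmichael crit}) at $p$, averaging over $n\in[x,2x]$ afterwards; since the target bound is trivial when $y$ is bounded, I may assume $y$ (hence $p$) large. The basic mechanism is that a random curve modulo $n$ is, by the Chinese remainder theorem, the same as independent random curves modulo each prime power dividing $n$. So for fixed $n=dp^{\nu_p(n)}$ (with $p\nmid d$) I would first condition on the reduction of $E$ modulo $d$, and only then let $E$ vary modulo $p$, applying Lemma~\ref{Lem:Lenstra} at the very end. First I dispose of the structurally rare case $p^2\mid n$: then $p\le\sqrt{2x}$, the number of such $n\in[x,2x]$ with $p>y$ is $\ll\sum_{y<p\le\sqrt{2x}}x/p^2\ll x/y$, and bounding the Carmichael probability by $1$ these contribute $\ll 1/y\ll y^{-1/2}$. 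Thus I may assume $\nu_p(n)=1$, i.e.\ $n=dp$ with $p\nmid d$ and $p=P^+(n)$.

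For the main case write $\lambda=\exp(E(\Z/p\Z))$. If $n$ is $E$-Carmichael then $\lambda\mid n-a_n+1$ by Lemma~\ref{Lem:Carmichael crit}, while $\lambda\mid N:=p-a_p+1$ automatically, and $a_n=a_da_p$ by multiplicativity (Lemma~\ref{Lem:L coeffs}). The key step is to eliminate $a_p$: from $\lambda\mid N$ we get $a_p\equiv p+1\pmod\lambda$, and substituting into $\lambda\mid dp-a_da_p+1$ yields the divisibility
\[
\lambda\mid A,\qquad A:=dp-a_d(p+1)+1,
\]
where $A$ depends only on $d,p$ and on the reduction of $E$ modulo $d$ (through $a_d$), not on $a_p$. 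Assume $A\neq0$. Since $E(\Z/p\Z)$ is two-generated (Lemma~\ref{Lem:Exponent}), Lemma~\ref{Lem:exponent 2generated} gives $\lambda\ge e(N)\ge\sqrt N\gg\sqrt p$. Hence $\lambda$ is a common divisor of $N$ and $A$ of size $\gg\sqrt p$, so $N$ is a multiple of some divisor $\delta\mid A$ with $\delta\gg\sqrt p$ lying in the window of length $4\sqrt p$ about $p+1$. For each such $\delta$ there are $O(1)$ admissible multiples, and $|A|\ll dp\cdot 2^{\omega(d)}\ll x^{1+\epsilon}$, so the number of divisors of $A$ is $\ll x^\epsilon$. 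Therefore $a_p=p+1-N$ is confined to a set $S$ with $|S|\ll x^\epsilon$ uniformly in the conditioning, and Lemma~\ref{Lem:Lenstra} gives Carmichael probability $\ll x^\epsilon/\sqrt p$.

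Summing this main contribution, grouped by $p=P^+(n)$ and using $\#\{n\in[x,2x]:p\mid n\}\ll x/p$, gives
\[
\frac1x\sum_{\substack{n\in[x,2x]\\P^+(n)>y}}\frac{x^\epsilon}{\sqrt{P^+(n)}}\ll x^\epsilon\sum_{p>y}p^{-3/2}\ll x^\epsilon y^{-1/2},
\]
which is exactly the desired bound.

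The one genuine obstacle is the degenerate case $A=0$, where the prime $p$ carries no information: then $\lambda\mid dp-a_da_p+1$ holds automatically given $\lambda\mid N$. However $A=0$ forces $a_d=(dp+1)/(p+1)$, which requires $(p+1)\mid(d-1)$, hence $d>p$ and so $p<\sqrt{2x}$. For each such $p$ the congruence $d\equiv1\pmod{p+1}$ leaves $\ll x/p^2+1$ admissible $d$, whence $\sum_{y<p\le\sqrt{2x}}(x/p^2+1)\ll x/y+\sqrt x$ integers $n$, each handled by the trivial bound $1$; these contribute $\ll 1/y+x^{-1/2}\ll x^\epsilon y^{-1/2}$. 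Collecting the three contributions proves the claim. The two technical points to get right are the uniform divisor estimate $\tau(A)\ll x^\epsilon$ (which replaces the weaker count via Lemma~\ref{Lem:ABC} used in Lemma~\ref{Lem:largep}) and the observation that both degenerate regimes, $p^2\mid n$ and $A=0$, are confined to $p\le\sqrt{2x}$ and are therefore negligibly sparse.
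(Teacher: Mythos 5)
Your proof is correct and follows essentially the same route as the paper: peel off $p=P^+(n)$, eliminate $a_p$ from the Carmichael divisibility to get $\exp(E(\Z/p\Z))\mid dp-a_d(p+1)+1$, use a divisor bound $\tau(A)\ll x^\epsilon$ to confine $a_p$ to $x^\epsilon$ values, and finish with Lemma~\ref{Lem:Lenstra}. Your treatment is if anything slightly more careful than the paper's (you work with the exponent $\lambda\ge e(N)\gg\sqrt p$ rather than the full order $p-a_p+1$, and you count the degenerate $A=0$ integers explicitly instead of deriving $P^+(n)\le 25$), but these are minor variations on the same argument.
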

\begin{proof}
The probability that $n$ is divisible by $P^+(n)^2$ is $\mathcal{O}(y^{-1})$, hence, we may neglect this case. Put $p=P^+(n)$. Then we obtain $p-a_p+1|n-a_n+1$, substracting $a_{n/p}(p-a_p+1)$ from the right hand side we obtain
\begin{equation}
\label{eq:n variable p large}
p-a_p+1|n-p a_{n/p}-a_{n/p}+1.
\end{equation}
If the right hand side is 0, then $n+1=(p+1)a_{n/p}$. In particular $a_{n/p}\equiv 1\pmod{p}$, thus either $a_{n/p}=1$, or $p\leq 2\sqrt{n/p}$. In the first case we have $n=p$, thus $n$ is prime, and therefore not Carmichael. In the second case we obtain $n+1\leq(p+1)a_{n/p}\leq 5\sqrt{n/p}$, hence $P^+(n)\leq 25$. Since the number of integers $n\in[x, 2x]$ with $P^+(n)$ can be bounded by some power of $\log x$, we may neglect this case as well, and assume from now on that the right hand side of (\ref{eq:n variable p large}) is non-zero. 

If $n-p a_{n/p}-a_{n/p}+1$ is a non-zero integer, then it has $\ll e^{c\frac{\log}{\log\log x}}$ divisors. Hence for $p$ and $a_{n/p}$ fixed, we have that among all possible choices for $a_p$ there are only $x^\epsilon$ satisfying (\ref{eq:n variable p large}). The probability for hitting one of these choices is $\ll p^{-1/2}\log p x^\epsilon\ll y^{-1/2}x^\epsilon$, hence our claim follows. 
\end{proof}

\begin{Lem}
\label{Lem:medium divisor}
Fix a squarefree integer $d\leq x$, which is not divisible by any prime number $<\log^3 x$. Pick an integer $n\in[x,2x]$ and an elliptic curve $E$ at random. Then the probability that $d|n$, $(d, n/d)=1$, and that $n$ is $E$-Carmichael is at most $\mathcal{O}(x^{-\frac{1}{8}+\epsilon}+x^\epsilon d^{-\frac{4}{3}})$.
\end{Lem}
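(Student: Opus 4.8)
The plan is to peel off the integers with a large prime factor using the previous lemma, and for the remaining, $x^{1/4}$-smooth, integers to run the decoupling argument of Lemma~\ref{Lem:n variable p large} at the top prime of $n$, using all the other prime factors to manufacture a large modulus.

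First I would dispose of the case $P^+(n)>x^{1/4}$. By Lemma~\ref{Lem:n variable p large} with $y=x^{1/4}$, the probability that $P^+(n)>x^{1/4}$ and $n$ is $E$-Carmichael is $\mathcal{O}(x^{-1/8+\epsilon})$, uniformly in $d$; this accounts for the first term of the claimed bound. From now on assume $P^+(n)\le x^{1/4}$. Since $d$ is squarefree, the hypotheses $d\mid n$ and $(d,n/d)=1$ mean that every prime of $d$ divides $n$ exactly once, and the number of $n\in[x,2x]$ enjoying both properties is $\mathcal{O}(x/d)$. Hence the quantity to be bounded is $\frac1x\sum_n\Pr_E[n\text{ is }E\text{-Carmichael}]$, the sum running over admissible $n$, and to produce the factor $d^{-4/3}$ it suffices to gain $\mathcal{O}(x^{\epsilon}d^{-1/3})$ in the inner probability for each such $n$.

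Fix an admissible $n$, set $p=P^+(n)\le x^{1/4}$ and write $n=pN$; after discarding the rare $n$ with $P^+(n)^2\mid n$ we have $\nu_q(n)=\nu_q(N)$ for every $q\mid N$, and at least $\omega(d)-1$ of the medium primes of $d$ divide $N\ge x^{3/4}$. I would reveal the reduction type of $E$ at every prime dividing $N$; this fixes $A:=a_N$ and, via Lemma~\ref{Lem:Carmichael crit}, the modulus $W:=\exp(E(\Z/N\Z))$, which divides $n-a_n+1$. As $a_n=a_pA$, this forces $a_pA\equiv n+1\pmod W$, confining $a_p$ to a single residue class modulo $W/\gcd(A,W)$, hence to at most $1+4\sqrt p\,\gcd(A,W)/W$ admissible values; each is realised with probability $\mathcal{O}(p^{-1/2}x^\epsilon)$ by Lemma~\ref{Lem:Lenstra}. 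Averaging over the revealed data yields
\[
\Pr_E[n\text{ is }E\text{-Carmichael}]\ll\Big(p^{-1/2}+\frac{\gcd(a_N,W)}{W}\Big)x^\epsilon .
\]

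The summand $p^{-1/2}$ is the easy one: grouping the admissible $n$ by the value of $P^+(n)$ and invoking the standard rarity of smooth integers, the sum $\sum_n P^+(n)^{-1/2}$ is dominated by $n$ with $P^+(n)\asymp x^{1/4}$, so this part again contributes only $\mathcal{O}(x^{-1/8+\epsilon})$ to the probability. The main obstacle is the defect $\gcd(a_N,W)/W$. Since $N\ge x^{3/4}$ and $|a_N|\le x^\epsilon\sqrt N$ by Lemma~\ref{Lem:L coeffs}, whenever $W$ is within $x^\epsilon$ of its generic size $N$ one has $\gcd(a_N,W)/W\le|a_N|/W\le x^{-3/8+\epsilon}$, which is comfortably admissible. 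The difficulty is purely the rare event that $W=\exp(E(\Z/N\Z))$ is atypically small, i.e.\ that $E(\Z/N\Z)$ is very far from cyclic. I would estimate its probability prime by prime: by Lemma~\ref{Lem:Lenstra} each order $|E(\Z/q\Z)|=q-a_q+1$ is almost squarefree and the corresponding group almost cyclic with probability $1-\mathcal{O}(q^{-1/2}\log^{\mathcal{O}(1)}q)$, while Lemma~\ref{Lem:ABC} quantifies how seldom the associated $e$-values, and thus the exponent, fall short of the full size. Because every prime dividing $d$ exceeds $\log^3 x$, the coincidences forcing a large defect are already forced on the primes of $d$, and summing the resulting probability against the count $\mathcal{O}(x/d)$ produces the second term $\mathcal{O}(x^\epsilon d^{-4/3})$. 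Making this final estimate precise — quantifying, in terms of $d$, the chance that $\exp(E(\Z/N\Z))$ is small — is the crux of the whole argument, and the step I expect to require the most care.
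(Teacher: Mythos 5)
Your decomposition is genuinely different from the paper's: you split $n$ at its largest prime factor $p=P^+(n)$ and use $W=\exp(E(\Z/N\Z))$ with $N=n/p$ to constrain the single trace $a_p$, whereas the paper fixes $E$ modulo $d$, uses $H=\exp(E(\Z/d\Z))$ to constrain $a_{n/d}$, and then converts the count of admissible values of $a_{n/d}$ into a probability by a divisor-counting argument over the tuples $(a_{p^k})_{p^k\|n/d}$. Unfortunately your route has a quantitative flaw that the change of decomposition makes unavoidable: the term $p^{-1/2}$. Any argument that pins $a_p$ down at a single prime $p$ produces an upper bound no smaller than $\asymp p^{-1/2}$ (one admissible value of $a_p$ is hit with probability of that order, cf.\ the lower bound in Lemma~\ref{Lem:Lenstra large S}), so each admissible $n$ contributes at least $P^+(n)^{-1/2}$. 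Your claim that $\frac1x\sum_n P^+(n)^{-1/2}\ll x^{-1/8+\epsilon}$ by rarity of smooth numbers is false: the density of $n\le x$ with $P^+(n)\le x^{1/u}$ is $\rho(u)=u^{-u(1+o(1))}$, which for $u\asymp\sqrt{\log x/\log\log x}$ is $x^{-o(1)}$ while $P^+(n)^{-1/2}\ge x^{-1/(2u)}=x^{-o(1)}$; hence $\frac1x\sum_{n\le x}P^+(n)^{-1/2}\gg\exp(-C\sqrt{\log x\log\log x})$, which exceeds $x^{-\delta}$ for every fixed $\delta>0$. Even after imposing $d\mid n$ this contribution is still $\gg d^{-1}x^{-o(1)}$, which is larger than $x^{-1/8+\epsilon}+x^\epsilon d^{-4/3}$ throughout a wide range such as $d\asymp x^{1/16}$. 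The paper's version of this step gains $H^{-1/2}\approx d^{-1/3}$ per integer $n$, uniformly in the factorization of $n/d$, which is why it survives multiplication by the density $1/d$ of multiples of $d$.

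The second gap is the one you flag yourself: the lower bound on the exponent ($W$ for you, $H$ for the paper) is not a final technicality but the heart of the proof, and the tools you cite do not reach it. Lemma~\ref{Lem:ABC} and near-cyclicity of a single $E(\Z/q\Z)$ only control the defect $|E(\Z/q\Z)|/\exp(E(\Z/q\Z))$ one prime at a time; the real enemy is that the orders $|E(\Z/q\Z)|$ for distinct $q$ share common factors, so the least common multiple of the local exponents can collapse far below their product. The paper handles this by ordering the primes of $d$, calling $p_i$ bad when $\gcd(H_{i-1},|E(\Z/p_i\Z)|)>p_i^{1/3}$, and bounding the probability of badness by $\mathcal{O}(p_i^{-2/3})$ via Lemma~\ref{Lem:short intervals not divide n+1} combined with Lemma~\ref{Lem:Lenstra}; this is precisely where the hypothesis that every prime of $d$ exceeds $\log^3 x$ enters. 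Your choice of modulus $N=n/P^+(n)$ makes this step strictly harder, since $N$ may contain many primes below $\log^3 x$ (indeed bounded primes), for which collisions among the group orders are unavoidable and Lemma~\ref{Lem:short intervals not divide n+1} gives nothing; you would also need to treat $a_N=0$ separately, as the paper does for $a_{n/d}=0$. As written, the proposal both leaves the crux open and, in the step it does carry out, falls short of the stated bound.
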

\begin{proof}
Fix the curve $E$ modulo $d$, and put $H=\exp(E(\Z/d\Z))$. Then we have $H|n-a_da_{n/d}+1$, thus $a_{n/d}$ is determined modulo $\frac{H}{(H, a_{n/d})}$. Since $(H, a_{n/d})|n+1$, we conclude that $a_{n/d}$ is determined modulo $\frac{H}{(H, n+1)}$. The probability for the event $d|n$ and $(n+1, H)\geq\sqrt{H}$ is $\ll\frac{1}{\sqrt{H}d}+\frac{1}{x}$, if $(n+1, H)<\sqrt{H}$, there are $\leq \sqrt{\frac{n}{dH}}$ choices for $a_{n/d}$, which could lead to charmichael numbers. 

We next show that the number of choices for $a_{p^k}$, $p^k\|\frac{n}{d}$,  which leads to a specific $a_{n/d}\neq 0$ is $\ll x^\epsilon$. There are $\ll\log x$ prime factors of $a_{n/d}$, distributing them over the possible $a_{p^k}$ can be done in at most
\[
\binom{\Omega(a_{n/d})+\omega(n/d)}{\omega(n/d)}\leq\left(\frac{C\log x + C\frac{\log x}{\log\log x}}{\frac{\log x}{\log\log x}}\right)^{C\frac{\log x}{\log\log x}}\ll \exp(C\frac{\log x\log\log\log x}{\log\log x}).
\]
different ways.

Fixing the values $a_{p^k}$, the number of choices for $a_p$ is at most $k$ by Lemma~\ref{Lem:only k}, hence the number of possible choices for $a_p$, $p|n/d$ given $a_{p^k}$ for all $p^k\|n/d$ is bounded above by $\max n_1n_2\cdots n_\ell$, where the maximum is taken over all positive integers satisfying $n_1+\dots+n_k=\Omega(n/d)\leq \frac{\log 2x}{\log 2}$, and $\ell=\omega(d)\ll\frac{\log x}{\log\log x}$. Clearly this maximum is bounded above by
\[
\left(\frac{\log x}{\ell\log 2}\right)^\ell\ll\exp(C\frac{\log x\log\log\log x}{\log\log x}).
\]
We conclude that for any $n, d$ the probability that $H|n-a_da_{n/d}+1$ subject to the condition $a_{n/d}\neq 0$ and $(H, n+1)<\sqrt{H}$ is $\ll x^\epsilon(\frac{1}{\sqrt{H}}+\sqrt{\frac{d}{n}})$.

If $a_{n/d}=0$, then $H|n+1$, which is impossible for $(H, d)>1$, and happens with probability $\leq\frac{1}{Hd}+\frac{1}{x}$ otherwise. Summarizing we obtain that the probability for the event $d|n$, $H|n-a_da_{n/d}+1$ is $\ll x^\epsilon(\frac{1}{d\sqrt{H}}+\frac{1}{x})$. If we choose $n\in[x, 2x]$ at random, the probability for the event that $d|n$,  $H|n-a_da_{n/d}+1$, and $(H, n+1)<\sqrt{H}$ is $\ll x^\epsilon(\frac{1}{d\sqrt{H}}+\frac{1}{\sqrt{dx}})$.

It remains to show that with high probability we have that $H$ is large. Write $d=p_1\cdots p_k$, and put $H_i=|E(\Z/p_1\cdots p_i\Z)|$. Call a prime $p_i$ bad, if $(H_{i-1}, |E(\Z/ p_i\Z)|)>p_i^{1/3}$. It follows from Lemma~\ref{Lem:short intervals not divide n+1} that either $H_{i-1}>x^{1/4}$, or the probability that $p_i$ is bad is bounded by $\mathcal{O}(\frac{\log x}{p_i\log\log x})=\mathcal{O}(p_i^{-2/3})$. Hence either $H>x^{1/4}$, or the probability that the product of the bad primes is $>y$ is bounded above by $x^\epsilon y^{-2/3}$. Hence with probability $\geq 1-x^\epsilon y^{-2/3}$ we obtain $H>\min(x^{1/4}, (d/y)^{2/3})$. Splitting the interval for $y$ dyadically we obtain that the probability for the event that $n$ is carmichael is bounded above by
\[
x^\epsilon\max_{1\leq y\leq d}y^{-2/3}\left(\frac{1}{d\sqrt{(d/y)^{2/3}}}+\frac{1}{dx^{1/8}}+\frac{1}{\sqrt{xd}}\right) \ll x^\epsilon\left(\frac{1}{dx^{1/8}}+\frac{1}{\sqrt{xd}}+\frac{1}{d^{4/3}}\right)
\]
\end{proof}
Denote by $\Psi(x,y)$ the number of integers $n\leq x$, such that $P^+(n)\leq
y$. The following is a consequence of Rankin's trick, see \cite[Theorem~III.5.2]{Tenenbaum}.
\begin{Lem}
\label{Lem:Psi extrem}
We have $\Psi(x,\log^3 x)=x^{2/3+o(1)}$.
\end{Lem}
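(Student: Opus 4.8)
The plan is to prove the two-sided estimate $\log \Psi(x,\log^3 x) = (\tfrac23 + o(1))\log x$ by pairing a Rankin-type upper bound with an elementary constructive lower bound. Throughout I write $L=\log x$, $\ell=\log\log x$, $y=L^3$, and $u=\log x/\log y = L/(3\ell)$; the whole point of the calculation is that the optimal balancing exponent in Rankin's trick turns out to be $\sigma=1-\tfrac13=\tfrac23$, which is exactly where the claimed main term $x^{2/3}$ comes from.

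For the upper bound I would apply Rankin's trick: for any $\sigma\in(0,1)$,
\[
\Psi(x,y)\le x^\sigma\sum_{P^+(n)\le y}n^{-\sigma}\le x^\sigma\prod_{p\le y}\frac{1}{1-p^{-\sigma}}.
\]
Taking $\sigma=\tfrac23$ gives $\log\Psi(x,y)\le \tfrac23\log x+\sum_{p\le y}\bigl(-\log(1-p^{-2/3})\bigr)$. Since $-\log(1-t)=t+O(t^2)$ and $\sum_p p^{-4/3}=O(1)$, the sum equals $\sum_{p\le y}p^{-2/3}+O(1)$, and partial summation from the prime number theorem yields $\sum_{p\le y}p^{-2/3}\sim \frac{3y^{1/3}}{\log y}=\frac{3L}{3\ell}=\frac{L}{\ell}=o(L)$, using $y^{1/3}=L$ and $\log y=3\ell$. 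Hence $\log\Psi(x,y)\le\tfrac23\log x+o(\log x)$.

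For the lower bound I would count products of exactly $k:=\lfloor u\rfloor$ primes not exceeding $y$. Any such product is at most $y^k\le y^u=x$ and is automatically $y$-smooth, and distinct multisets of primes give distinct integers by unique factorization, so
\[
\Psi(x,y)\ge\binom{\pi(y)+k-1}{k}\ge\Bigl(\frac{\pi(y)}{k}\Bigr)^{k}.
\]
Since $\pi(y)\sim y/\log y=L^3/(3\ell)$ while $k\sim L/(3\ell)$, we get $\pi(y)/k\sim L^2$, and therefore $\log\Psi(x,y)\ge k\log(\pi(y)/k)=(1+o(1))\tfrac{L}{3\ell}\cdot 2\ell=(\tfrac23+o(1))L$. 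Combining the two bounds gives $\Psi(x,\log^3 x)=x^{2/3+o(1)}$.

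The argument is essentially routine once the value $\sigma=\tfrac23$ is pinned down, so the main thing to watch is the bookkeeping of the $o(1)$ terms. On the upper side one must verify that passing from the Euler product to $\exp(\sum_{p\le y}p^{-2/3})$ costs only $O(1)$ and that the main term $L/\ell$ is genuinely $o(L)$; on the lower side one must confirm that $\pi(y)/k\to\infty$ fast enough that the crude bound $\binom{\pi(y)}{k}\ge(\pi(y)/k)^k$ is not wasteful. Both are immediate because the exponent $3>1$ keeps us in the range where $u\to\infty$ yet $\log u=o(\log y)$. The cited reference \cite[Theorem~III.5.2]{Tenenbaum} packages precisely this Rankin optimization, so one may alternatively simply invoke it with $y=\log^3 x$.
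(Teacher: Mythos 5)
Your proof is correct and follows essentially the same route as the paper, which simply invokes Rankin's trick via \cite[Theorem~III.5.2]{Tenenbaum}: your Rankin upper bound with $\sigma=2/3$ and the multiset-counting lower bound are exactly the two halves of that cited theorem, specialized to $y=\log^3x$. The bookkeeping ($\sum_{p\le y}p^{-2/3}\sim L/\ell=o(L)$ and $k\log(\pi(y)/k)\sim\tfrac{2}{3}L$) checks out.
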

We can now prove the third part of Theorem~\ref{thm:main}. Consider first the set of integers $n$ which contain a divisor $D_1\leq d\leq D_2$, such that $d$ is squarefree, coprime to $n$, and all prime divisors of $d$ are larger then $\log^3 n$. Using Lemma~\ref{Lem:medium divisor} we obtain that the probability that $n$ satisfies this condition and is $E$-carmichael for a random curve $E$ is bounded above by 
\[
x^\epsilon\sum_{D_1 \leq d\leq  D_2} (d^{-1}x^{-1/8}+d^{-1/2}x^{-1/2}+d^{-4/3})\ll x^\epsilon(x^{-1/8}+D_2^{1/2}x^{-1/2}+D_1^{-1/3}).
\]
If $n$ does not possess such a divisor, then either $P^+(n)>D_2$, or we have $\prod_{p|n}^* p^{\nu(p)} > n/D_1$, where the product is taken over all prime divisors $p$ of $n$, which are $\leq \log^3 n$ or satisfy $\nu_p(n)\geq 2$. In the first case we can apply Lemma~\ref{Lem:n variable p large} to find that the probability that $n$ is $E$-carmichael is $\ll x^{\epsilon}D_2^{-1/2}$. In the second case we can write $n=abc$, where $P^+(a)\leq\log^3 x$, $b$ is powerful, and $c<D_1$. Using Lemma~\ref{Lem:Psi extrem} we see that for given $c$  the number of possible choices for $ab$ is $\ll (x/c)^{2/3+\epsilon}$. Summing over $c$ we find that the number of possible choices for $n$ is $x^{2/3+\epsilon}D_1^{1/3}$. Hence the probability that a random $n$ satisfies this condition is $\ll (x/D_1)^{-1/3+\epsilon}$. Summing up we find that the probability that a random $n$ is $E$-carmichael for a random curve $E$ is bounded above by
\[
x^\epsilon(x^{-1/8}+D_2^{1/2}x^{-1/2}+D_1^{-1/3} + D_1^{1/3}x^{-1/3} + D_2^{-1/2}).
\]
In a wide range of parameters, e.g. for $D_1=x^{3/8}$,  $D_2=x^{3/4}$, we have that the first term dominates the other terms, hence we conclude that the probability that $n$ is $E$-carmichael for $n$ and $E$ chosen at random is $x^{-1/8+\epsilon}$.

Note that the numerical value of the exponent can probably be improved, actually, we have no idea what the real value should be. It could well be something like $-1+\epsilon$, however, this would probably be hard to prove.

Jan-Christoph Schlage-Puchta\\
Mathematisches Institut\\
Universit\"at Rostock\\
Ulmenstra\ss e 69, Haus 3\\
18057 Rostock\\
jan-christoph.schlage-puchta@uni-rostock.de
\end{document}